\author{ Michael C.\
Laskowski  
and Danielle S.\ Ulrich
 \thanks{Both authors partially supported
by NSF grants DMS-1855789 and DMS-2154101.}
\\
Department of Mathematics\\University of Maryland
}
\def\includeE#1{{\lhook\kern-3.5pt\joinrel\smash{
    \mathop{\longrightarrow}\limits^{#1}}}}
\def\efor/{Example~\ref{E4}}
\def\BL/{Baldwin--Lachlan}
\def\Bu/{Buechler}
\def\Hr/{Hrushovski}
\def\lm/{locally modular}
\def\wm/{weakly minimal}
\def\nm/{non--modular}
\def\ss/{superstable}
\def\ud/{unidimensional}
\def\sm/{strongly minimal}
\def\abar{\overline{a}}
\def\bbar{\overline{b}}
\def\cbar{\overline{c}}
\def\dbar{\overline{d}}
\def\hbar{\overline{h}}
\def\rbar{\overline{r}}
\def\xbar{\overline{x}}
\def\zbar{\overline{z}}
\def\acl{{\rm acl}}
\def\dom{{\rm dom}}
\def\tp{{\rm tp}}
\def\tr/{trivial}
\def\nt/{non--trivial}
\def\st/{strong type}
\def\abar{\bar{a}}
\def\bbar{\bar{b}}
\def\cbar{\bar{c}}
\def\dbar{\bar{d}}
\def\phi{\varphi}
\def\A{{\cal A}}
\def\C{{\cal  C}}
\def\D{{\cal D}}
\def\F{{\cal F}}
\def\FF{{\bf F}}
\def\P{{\cal P}}
\def\tp{{\rm tp}}
\def\dom{{\rm dom}}
\def\acl{{\rm acl}}
\def\Fa0{{\FF^a_{\aleph_0}}}
\def\<{\langle}
\def\>{\rangle}
\newtheorem{Theorem}{Theorem}[section]
\newtheorem{Proposition}[Theorem]{Proposition}
\newtheorem{Definition}[Theorem]{Definition}
\newtheorem{Notation}[Theorem]{Notation}
\newtheorem{Lemma}[Theorem]{Lemma}
\newtheorem{Corollary}[Theorem]{Corollary}
\newtheorem{Fact}[Theorem]{Fact}
\def\iso{\cong}
\def\V{{\mathbb V}}
\def\dc{{\rm dc}}
\def\cc{{\bf c}}
\def\dd{{\bf d}}
\def\ss{{\bf s}}
\def\qftp{{\rm qftp}}
\def\ptl{{\rm ptl}}
\DeclareMathOperator{\Mod}{Mod}
\DeclareMathOperator{\Aut}{Aut}
\DeclareMathOperator{\CSS}{CSS}
\DeclareMathOperator{\css}{css}
\def\Edist{{\rm Edist}}
\def\hgt{{\rm ht}}
\def\CC{{\mathcal C}}
\def\BP{{\cal BP}}
\def\PP{{\mathbb P}}
\def\K{{\mathcal K}}
\def\LL{{\mathcal L}}
\def\MM{{\mathcal M}}
\def\MM{{\mathfrak{M}}}
\DeclareMathOperator{\Pos}{Pos}
\DeclareMathOperator{\Age}{Age}
\DeclareMathOperator{\Sym}{Sym}
\DeclareMathOperator{\ElDiag}{Eldiag}
\def\hhat{{\hat{h}}}
\def\alphabar{{\overline{\alpha}}}
\newcommand\myrestriction{\mathord\restriction}
\def\mr#1{\myrestriction_{#1}}
\begin{document}

	\title{Borel complexity of families of finite equivalence relations via large cardinals}
	
	\date{\today}

	\maketitle
	
	\begin{abstract}  
	We consider a large family of theories of equivalence relations, each with finitely many classes, and assuming the existence of an $\omega$-Erd\H{o}s cardinal, we determine which of these theories are Borel complete.  
 We develop machinery, including {\em forbidding nested sequences} which implies a tight upper bound on Borel complexity, and {\em admitting cross-cutting absolutely indiscernible sets} which in our context implies Borel completeness.
 In the Appendix we classify the reducts of theories of refining equivalence relations, possibly with infinite splitting.  
	\end{abstract}

 \section{Introduction}

For many years, the authors have sought to identify dividing lines for Borel complexity of invariant classes of countable structures.  One of the stumbling blocks has been identifying how to handle types in stable theories that have a perfect set of strong types extending it.  Indeed, in \cite{URL}, along with Richard Rast, the authors proved that REF(bin), the theory of binary splitting, refining equivalence relations, is not Borel complete (in fact, `countable sets of countable sets of reals' do not Borel embed into $\Mod(REF(bin))$ yet the isomorphism relation on pairs of countable models is not Borel.  It became apparent that many of the existing tools of Descriptive Set Theory would not be applicable to such theories.

In order to isolate the strong type problem from other phenomena, we concentrate on families of theories whose model theory is extremely tame.  For almost all of this paper, we investigate theories of countably many equivalence relations, each with finitely many classes.   Any such theory is mutually algebraic, equivalently, every completion is weakly minimal with trivial geometries (see e.g., \cite{MA}). Moreover, in our examples, for any model $M$, $\acl(X)=X$ for every subset $X\subseteq M$ and there is a unique 1-type.
Somewhat surprisingly, we find that even here, understanding the Borel complexity of such a theory is extremely involved -- so much so that in some cases we are only able to prove non-Borel completeness by using a large cardinal axiom.

Why should the situation be so complicated?  With Proposition~4.4 of \cite{MA}, the first author showed that the {\em elementary diagram} $\ElDiag(M)$ of any model of a mutually algebraic theory admits a strong structure theory.  From this, it follows easily that for any mutually algebraic theory $T$, the elementary diagram $\ElDiag(M)$ has a Borel isomorphism relation and there is no
Borel embedding of `countable sets of countable sets of reals' into the class of countable models of $\ElDiag(M)$.  But, passing from $Th(M)$ to $\ElDiag(M)$ obviates the behavior we were aiming to study.  For $M$ mutually algebraic, the strong type structure on $\ElDiag(M)$ is degenerate.  

The Borel complexity of mutually algebraic structures is controlled by the profinite group of elementary permutations of $\acl^{eq}(\emptyset)$.  In previous works, we carefully analyzed $REF(\delta)$, refining equivalence relations in which each $E_n$-class is partitioned into $\delta(n+1)$ $E_{n+1}$-classes.  Here, we prove that whereas $REF(\delta)$ is not Borel complete, there is a {\em tame expansion}  (see Definition~\ref{tamedef}) of some countable model that is Borel complete.    As well,  in the Appendix we prove that no model $M$ of the theory of refining equivalence relations has any interesting reducts, from which it follows that no reduct of $REF(\delta)$ is Borel complete either.
This last remark is rather peculiar.  If one starts with a countable model 
$M\models REF(\delta)$, neither its theory nor any of the theories of its reducts are Borel complete.  Moreover, if one adds constants naming each point and thereby obtaining $\ElDiag(M)$, this expanded structure is not Borel complete.  However, the expansion $\ElDiag(M)$ does have a Borel complete reduct, say by Theorem~3.2 of 
\cite{reducts}.

By contrast, in a pair of papers \cite{reducts} and \cite{expansions}, we give a description of the countable models of  $CC(\delta)$, the theory of countably many, cross-cutting equivalence relations $E_n$, where each $E_n$ has $\delta(n)<\omega$ equivalence relations.  On one hand, if 
$\{\delta(n):n\in\omega\}$ is unbounded, then $CC(\delta)$ is Borel complete by Theorem~2.1 of \cite{reducts}.  By contrast, if $\delta(n)<m$ for some fixed $m\in\omega$, then $CC(\delta)$ is far from being Borel complete by Theorem~6.2 of \cite{expansions}.
With Corollary~\ref{introCor} here, we generalize both of these results.  
However, so long as $\delta(n)\ge 2$ for each $n$, there exist countable models $M\models CC(\delta)$ that have Borel complete reducts.

To try to develop a general framework for handling the behavior of strong types, in Definition~\ref{start} we let
 $\mathbb{P}$ be the class of all triples $(P, \leq, \delta)$ where $(P, \leq)$ is a countable poset for which
 $\{q \in P: q \leq p\}$ is finite for all $p\in P$,
 and where $\delta: P \to \omega \backslash \{0, 1\}$. When $\leq$ and $\delta$ are clear from context then we omit them.  In this context, $REF(\delta)$ is obtained by taking $(P,\le)$ to be the chain $(\omega,\le)$.  
 By contrast, $CC(\delta)$ is obtained by taking $(P,\le)$ to be a countably infinite antichain (no two distinct elements are comparable).  
 
 Say that $(P, \leq, \delta)$ is {\em bounded} if there is some $m < \omega$ such that $\delta \leq m$ and such that every chain from $P$ has length at most $m$.

 Say that $(P, \leq, \delta)$ is {\em minimally unbounded} if $P$ is unbounded, and yet for all  downward closed $Q\subseteq P$, either $Q$ or $P \backslash Q$ is bounded.

To each $(P, \leq, \delta) \in \PP$ we associate a first-order theory $T(P, \leq, \delta) = T_P$ in the language $\LL_P = \{E_p: p \in P\}$, where $E_p$ refines $E_q$ for $p > q$ and $\delta$ determines how many $E_p$-classes each $E_q$-class splits into. In the main case of interest when $P$ is infinite, $T_P$ eliminates quantifiers and is mutually algebraic (weakly minimal trivial).  Moreover, for every $M\models T_P$, $\acl(X)=X$ for all subsets $X\subseteq M$ and `$x=x$' generates a complete 1-type.
% For example, when $P$ is an antichain then this is equivalent to crosscutting equivalence relations 
% as discussed above, and when $P= (\omega, \leq)$ then this is equivalent to REF as discussed above.

The following three theorems classify when $T(P, \leq, \delta)$ is Borel complete. By a {\em tame expansion} of $T_P$ we mean an expansion where every new symbol $S$ is relational, and is furthermore $E_p$-invariant for some $p \in P$.
 \begin{Theorem}\label{Intro1}
     If $P$ is bounded then $T_P$ is not Borel complete.  In fact, there is no Borel embedding of `countable sets of countable sets of reals' into any tame expansion of $T_P$. Conversely, if $P$ is unbounded then some tame expansion of $T_P$ is Borel complete.
 \end{Theorem}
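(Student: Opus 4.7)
The plan is to prove the two directions separately, using the two pieces of machinery advertised in the abstract.

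For the bounded direction I would apply the \emph{forbidding nested sequences} framework. When $P$ is bounded by some $m<\omega$, every $E_p$ has at most finitely many classes (controlled by $m$) and no chain of $E_p$'s exceeds length $m$. I would first verify that any tame expansion $T^*$ of $T_P$ forbids nested sequences in the technical sense of the paper, essentially because every new symbol is $E_p$-invariant for some $p\in P$ and so ultimately refers to a finite piece of data, and no countable combination of such symbols can be arranged into an unbounded nesting. Once this is established the general upper-bound theorem from the framework delivers that `countable sets of countable sets of reals' does not Borel-embed into $\Mod(T^*)$, which is strictly stronger than non-Borel-completeness of $T^*$.

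For the converse I would apply the \emph{admitting cross-cutting absolutely indiscernible sets} framework, splitting into two cases. If $\delta$ is unbounded I extract a sequence $(q_n)_{n\in\omega}$ in $P$ with $\delta(q_n)\to\infty$; the natural cross-cutting in $T_P$ among the $E_{q_n}$ already admits cross-cutting absolutely indiscernible sets (a check analogous to the unbounded-$\delta$ case of $CC(\delta)$ from \cite{reducts}), so $T_P$ with the trivial tame expansion is Borel complete. If $\delta$ is bounded then by unboundedness $P$ contains an infinite chain $p_0<p_1<\ldots$, and since $T_P$ alone is not Borel complete in this setting I would build a specific tame expansion by adding countably many unary predicates $U_n$, each $E_{p_n}$-invariant, with axioms ensuring that the $U_n$ label the $E_{p_n}$-classes as flexibly as possible. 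The key verification is that this expansion admits cross-cutting absolutely indiscernible sets between differently-labelled $E_{p_n}$-sub-classes of each $E_{p_{n-1}}$-class, and hence by the framework it is Borel complete.

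The main obstacle is the chain-subcase construction for the converse: engineering a tame expansion of a chain theory that admits cross-cutting absolutely indiscernible sets. A single chain is naturally refining, not cross-cutting, so the cross-cutting symmetries must come from the added tame predicates themselves. I expect the construction to exploit the fact that inside each $E_{p_n}$-class the $E_{p_{n+1}}$-sub-classes are permutable by automorphisms of $T_P$, so that free $E_{p_n}$-invariant labels can produce cross-cutting symmetries between subsets of sub-classes with distinct label patterns; this is exactly the kind of ``induced cross-cutting'' the intro promises for $REF(\delta)$. The bounded direction, while a priori subtle because of the flexibility of tame expansions, should reduce to a bookkeeping exercise once the forbidding-nested-sequences framework is in place.
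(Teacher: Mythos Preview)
There is a genuine gap in your bounded direction. You propose to verify that any tame expansion $T^*$ of $T_P$ forbids nested sequences, but this is impossible: $T^*$ is a first-order theory with an infinite model, and as the paper notes immediately after Definition~\ref{nested}, compactness guarantees that no such theory forbids nested sequences. The paper's argument does not work with $T^*$ directly. Instead it passes to the infinitary sentence $\Phi_P = T_P \cup \{\Psi\}$, where $\Psi$ asserts $\forall x\forall y\,(\bigwedge_{q\in P} E_q(x,y)\rightarrow x=y)$, and shows (Corollary~\ref{boundedconsequence}) that $\Phi_P$ forbids nested sequences when $P$ is bounded. The tame expansion $T^*$ is then shown to be Borel equivalent to $\CC(\Phi_P\wedge T^*)$ (Fact~\ref{tameFacts}(3)), and since forbidding nested sequences is inherited by expansions, Theorem~\ref{topTrivialTheorem} applies to $\CC(\Phi_P\wedge T^*)$ rather than to $T^*$. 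Your sketch (``every new symbol is $E_p$-invariant \ldots'') addresses why the tame predicates do not help, but misses that the equivalence relations $E_q$ themselves already permit nested sequences in $T_P$; the infinitary sentence $\Psi$ is what rules them out.

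Your unbounded direction has the right overall shape but is incomplete in the case analysis. When $\delta$ is unbounded you cannot simply extract $(q_n)$ with $\delta(q_n)\to\infty$ and assert cross-cutting: if the $q_n$ lie on a chain, the $E_{q_n}$ refine rather than cross-cut. The paper uses a Ramsey argument (Lemma~\ref{Ramsey}) to obtain an antichain on which $\delta$ is unbounded, or else falls into the unbounded-height case. When $\delta$ is bounded and $P$ has unbounded height, unbounded height alone does not yield an $\omega$-chain; the paper first reduces to the narrow case (Corollary~\ref{botCor}) and then invokes Theorem~\ref{omegaChain}. Finally, your tame expansion via unary $E_{p_n}$-invariant predicates is too vague; the paper's construction (Theorem~\ref{tametheorem}) is quite specific: split the $\omega$-chain into even and odd subchains $Q_0,Q_1$ and add \emph{binary} relations $F^i_n$ recording agreement along $Q_i$ up to level $n$, so that the limits $F^0_\omega,F^1_\omega$ behave like two genuinely cross-cutting equivalence relations. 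Unary labels on classes do not obviously produce this effect.
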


 \begin{Theorem}\label{Intro2}
 If $P$ is unbounded, but not minimally unbounded, then $T_P$ is Borel complete.
 \end{Theorem}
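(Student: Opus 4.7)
Let $P$ be unbounded but not minimally unbounded, and fix a downward-closed $Q \subseteq P$ such that both $Q$ and $P \setminus Q$ are unbounded. The plan is to verify directly that $T_P$ satisfies the criterion of admitting cross-cutting absolutely indiscernible sets (the machinery promised in the abstract), and then invoke the general theorem that this criterion implies Borel completeness.

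The first step is to extract an absolutely indiscernible family from each side. Since $Q$ is unbounded, either it contains an infinite chain $q_0 < q_1 < \cdots$ or it contains elements $q_n$ with $\delta(q_n) \to \infty$; either way the corresponding relations $E_{q_n}$ in $T_P$ supply an $\omega$-indexed family of class-systems that can be realized as an absolutely indiscernible family in a saturated $M \models T_P$. Applying the same argument to $P \setminus Q$ gives an analogous $(P \setminus Q)$-indiscernible family. The crucial independence property then comes from the downward closure of $Q$: for any $q \in Q$ and $r \in P \setminus Q$ we have $r \not\le q$, so either $r$ and $q$ are incomparable (and $E_r$, $E_q$ cross-cut freely) or $r > q$ (and $E_r$ refines $E_q$, so inside any fixed $E_q$-class the $E_r$-class is freely choosable). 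Hence any $Q$-type can be amalgamated with any $(P \setminus Q)$-type, producing a consistent $\omega \times \omega$ grid of realizations. This grid witnesses the cross-cutting absolutely indiscernible sets criterion, and Borel completeness is immediate from the general theorem.

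The main obstacle I anticipate is verifying the \emph{absoluteness} clause: the two indiscernible families, together with the independent amalgamation between them, must survive under all automorphisms of any countable $M \models T_P$, not merely under automorphisms that fix a witnessing parameter set. This will be addressed by constructing both indiscernible families purely from the combinatorial data of $P$, so that the unboundedness on each side generates infinitely many independent ``coordinates'' and the indiscernibility is forced by the axioms of $T_P$ itself rather than by any ambient choice. A secondary point to check is that the passage from ``unbounded'' to ``infinite chain or unbounded $\delta$'' on each side really furnishes the type of indiscernible configuration that the cross-cutting criterion requires, but this should reduce to a routine case split using only the combinatorial hypotheses on $Q$ and $P \setminus Q$.
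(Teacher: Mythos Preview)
Your plan works when the two unbounded pieces are \emph{orthogonal} (pairwise incomparable); this is exactly the paper's Theorem~\ref{bot}. The gap is in the case you flag but do not resolve: when elements $r\in P\setminus Q$ sit strictly above elements $q\in Q$. You write that then ``inside any fixed $E_q$-class the $E_r$-class is freely choosable,'' and hope this yields cross-cutting. It does not. The cross-cutting criterion (Definition~\ref{cross-cutting}) requires two $\Aut(M)$-invariant equivalence relations $E^0,E^1$ whose classes can be permuted \emph{independently}. But the invariant equivalence relations available in $\Phi_P$ are essentially the $E_R$ for $R\subseteq P$ downward closed, and once $r>q$ the relation built from the $(P\setminus Q)$-side refines the one built from the $Q$-side; any automorphism permuting the finer classes drags the coarser ones along. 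The paper states this explicitly in the example $P=(\omega\text{-chain})\times(2\text{-chain})$: ``equivalence relations of the form $E_R$ for $R\subseteq P$ downward closed do not witness the existence of cross-cutting absolutely indiscernible sets.'' Your amalgamation argument only shows that the joint types are consistent, not that the two families of classes admit independent permutations by automorphisms of $M$.

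The paper's route around this is genuinely more elaborate than a direct verification of the criterion. With $R$ downward closed and both $R$ and $Q=P\setminus R$ unbounded, it first works purely in $\MM_Q$ and produces cross-cutting absolutely indiscernible sets there, but only in a \emph{tame expansion} $M^+$ of some $M_Q\preceq\MM_Q$ (Theorem~\ref{tametheorem}): one takes an $\omega$-chain in $Q$, splits it into even and odd subchains, and adds new binary relations $F^0_n,F^1_n$ recording agreement along each subchain. These new relations are $\Aut(M^+)$-invariant (by fiat) but not $\Aut(M_Q)$-invariant, which is precisely why the expansion is needed. The second step (Theorem~\ref{twolevel}) uses the unbounded downward-closed piece $R$ to \emph{encode} the extra tame relations via a coloring: absolutely indiscernible dense sets in $\MM_R$ serve as ``labels'' so that each $S_i\in\LL^+\setminus\LL_Q$ becomes recoverable from colors alone, giving a Borel reduction $\CC(M^+)\le_B\CC(N)$ for some $N\preceq\MM_P$. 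Lemma~\ref{shift1} then converts this into Borel completeness of $T_P$. So the missing idea in your plan is that one side must be used not to supply a second invariant equivalence relation, but to absorb the non-invariant relations that the other side needs.
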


For the following, $\kappa(\omega)$ is the first $\omega$-Erd\H{o}s cardinal (see Definition~\ref{Erdos}). We quote machinery of the second author from \cite{SB}, where it is shown that if $\kappa(\omega)$ exists, then various Schr\"{o}der-Bernstein properties imply the failure of Borel completeness. It is open whether the large cardinal is necessary.
%YYYY Erdos \H doesn't work, I don't understand why not
 \begin{Theorem}\label{Intro3}
     Suppose $P$ is minimally unbounded, and assume $\kappa(\omega)$ exists. Then $T_P$ is not Borel complete.
 \end{Theorem}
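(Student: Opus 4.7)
The plan is to verify one of the Schr\"oder--Bernstein-type properties from \cite{SB} for $T_P$ under the assumption that $P$ is minimally unbounded, and then invoke that machinery, which converts such properties into failure of Borel completeness given $\kappa(\omega)$. The work is concentrated in a structural analysis of minimally unbounded posets and a rigid classification of the countable models of $T_P$.

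First I would unpack minimal unboundedness combinatorially. The condition that for every downward closed $Q \subseteq P$ either $Q$ or $P \setminus Q$ is bounded forces $P$ to have essentially one unbounded ``direction.'' Concretely, I expect $P$ to decompose into a minimal unbounded downward closed piece $P_\infty$ together with a bounded remainder $P_{\mathrm{fin}}$, where the unboundedness of $P_\infty$ stems either from a single cofinal infinite chain or from $\delta$ tending to infinity along a single cofinal branch. The key consequence is that $P$ cannot be split into two genuinely independent unbounded ``axes'' -- exactly the configuration that drives Borel completeness in Theorem~\ref{Intro2}.

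Next, using that $T_P$ is mutually algebraic with $\acl(X) = X$ and a unique $1$-type, I would parametrize countable models of $T_P$ by cardinal-valued multiplicity functions on strong types, modulo the natural action of $\Aut(\acl^{\mathrm{eq}}(\emptyset))$. Strong types correspond to coherent $E_p$-class assignments; the single unbounded direction in $P_\infty$ means these form a thin tree, while $P_{\mathrm{fin}}$ contributes only bounded ``side data.'' Under this parametrization, an elementary embedding $M \hookrightarrow N$ translates into pointwise domination of multiplicity functions after some permutation of the side data.

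Finally, I would verify the SB statement required by \cite{SB}: mutually elementarily embeddable countable $M, N \models T_P$ must satisfy $M \cong N$. The thinness of the parametrizing tree (a direct consequence of minimal unboundedness) forces mutual domination of cardinal functions to collapse to equality, with the bounded side data absorbed by a finite permutation. The main obstacle is this last step: matching the particular SB variant needed by \cite{SB} and carefully tracking how bounded side data from $P_{\mathrm{fin}}$ can be reshuffled under embeddings without breaking the conclusion. The minimal unboundedness hypothesis is essential precisely here, because any pair of independent unbounded directions would enable genuine cross-cutting of multiplicity functions along both axes, destroying SB -- which is exactly why $T_P$ is Borel complete in the complementary case of Theorem~\ref{Intro2}.
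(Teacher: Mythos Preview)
Your high-level strategy matches the paper's: prove a Schr\"oder--Bernstein property and invoke the machinery of \cite{SB} together with the existence of $\kappa(\omega)$. The paper also passes through the reduction $T_P \leq_B \CC(\Phi_P)$ (Lemma~\ref{shift1}), which is exactly your ``multiplicity function'' parametrization. So the scaffolding is right.

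However, your proposed proof of the SB property has a genuine gap. First, a precision issue: the property needed from \cite{SB} is SB for $\preceq_{1\omega}$-embeddings of colored models $(M,\cc)\models\CC(\Phi_P)$, where $\preceq_{1\omega}$ means $(M,\abar)\equiv_1(N,\abar)$ for all finite $\abar$ --- strictly stronger than ordinary elementary embedding. The paper in fact proves SB for the yet weaker notion $\preceq_1^*$ (Definition~\ref{SBembed}). Proving SB for plain elementary bi-embeddability, as you state it, is a different (harder) target, and it is not clear it even holds.

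Second, and more seriously, ``mutual domination of cardinal functions collapses to equality because the tree is thin'' does not describe a working argument. The space of strong types is $\MM_P$, which has continuum many points even for $REF(bin)$; nothing is thin there. What the paper actually exploits is a \emph{filtration} of $P$ by bounded downward-closed pieces $Q_n=\{q\in P:q\not>p_n\}$ coming from a fixed $\omega$-chain $(p_n)$, together with the fact that each $\Aut(\MM_{Q_n})$ has bounded exponent (Corollary~\ref{boundedconsequence}). From this one shows (Lemmas~\ref{Stray}, \ref{Stray2}) that any $\preceq_1^*$-embedding $f$ induces an \emph{isomorphism} on each quotient $[M]_{Q_n}$, and moreover some iterate of $g\circ f$ is the identity there. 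The actual SB proof (Theorem~\ref{SB}) is then a delicate back-and-forth: to each matched pair $(a_i,b_i)$ one attaches a witnessing $1$-embedding $f_i$, subject to the coherence condition that $[f_i]_{Q_n}=[f_j]_{Q_n}$ whenever $a_i,a_j$ agree along the chain up to level $n$. Extending the system requires producing new $f_{i_*}$ compatible with the old ones on the relevant $Q_n$-quotient, and this is precisely where bounded exponent is used. Your single decomposition $P=P_\infty\cup P_{\mathrm{fin}}$ does not give you this inductive handle; the filtration by the $Q_n$ and the torsion of their automorphism groups are the missing ideas.
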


 The proofs of these theorems involve combining some tools from Descriptive Set Theory.  Some of these are novel and are discussed in Section 2, which is entirely general.  Section~3 discusses {\em down-finite} partial orders, which are the only partial orders we consider.  The assumption is natural as we require  the relations $E_p$ to have only finitely many classes.
  The classes of theories $T_P$ are defined by Section~4
 and this framework is used until the end of the paper.

 %YYYY I don't understand the comment about E_p being definable

 The appendix is self-contained and does not require the equivalence relations to have finite splitting.  The main result,
 Theorem~\ref{app}, is that any reduct of any model $M\models REF$ is (after reindexing) also a model of $REF$.  
 It is hoped that this result may be of interest in its own right.  

 \section{Borel complexity and dividing lines from variants of indiscernibility}

In their seminal paper \cite{FS}, Friedman and Stanley define and develop a notion of {\em Borel reducibility} among 
first order and infintarily definable classes of structures  with universe $\omega$ in  countable languages.  The classes $\Mod(\Phi)$ for $\Phi\in L_{\omega_1,\omega}$
are precisely those   classes that are 
 Borel and invariant under permutations of $\omega$.  
 Such classes include $\Mod(T)$, the countable models of first order theories $T$.      For such sentences $\Phi$, $\Psi$,  possibly in different languages, a  {\em Borel reduction} is a Borel function $f:\Mod(\Phi)\rightarrow \Mod(\Psi)$ that satisfies $M\iso N$ if and only if $f(M)\iso f(N)$.  One says that $\Phi$ is {\em Borel reducible} to $\Psi$, $\Phi\le_B\Psi$,    if there is a Borel reduction $f:\Mod(\Phi)\rightarrow \Mod(\Psi)$.  $\Phi$ and $\Psi$ are {\em Borel equivalent} if $\Phi\le_B\Psi$ and $\Psi\le_B \Phi$.  
Among Borel invariant classes there is a maximal class with respect to $\le_B$.  We say $\Phi$ is {\em Borel complete} if $\Mod(\Phi)$ is in this maximal class.  Examples include the (incomplete) theories of graphs, linear orders, groups, and fields.

Many of the classical tools of Descriptive Set Theory do not help in this context, as the isomorphism relation on pairs of 
models in $\Mod(T)$ is typically not Borel.  Here, we adapt various tools from model theory to aid in identifying the Borel complexity of theories.

 Indiscernible sequences inside models of a theory
 or an infinitary sentence are important in determining the model-theoretic complexity of 
 its  class of models.  
 However, in the first order context, merely asking for the existence of a model with a non-constant, infinite sequence of
 indiscernibles is not a strong requirement.  Indeed, a compactness argument shows that any theory $T$ admitting an infinite model also admits models containing non-constant indiscernible sequences of any order type.  
 This extreme freedom may fail for  sentences $\Phi\in L_{\omega_1,\omega}$, but the existence of a non-constant, infinite indiscernible sequence inside some model does not imply that the class of countable models of $\Phi$ is complicated.
 Indeed, examples of Leo Marcus~\cite{Marcus} and Julia Knight~\cite{Knightindisc} show that a sentence $\Phi\in L_{\omega_1,\omega}$ can have a unique model $M$, yet $M$ contains an infinite,  fully indiscernible  subset.

 However, both weakenings and strengthenings of indiscernibility  impact Borel complexity.
 Recall that in any structure $M$, a sequence $(a_n:n\in\omega)$ is indiscernible iff it is nested (see Definition~\ref{nested}) and $\tp(a_m/A_{<m})$ does not split over $A_{<n}$ for every $n<m<\omega$.
 We concentrate on the first of these two conditions. The concept of non-splitting will not play a role in this paper.

\begin{Definition}   \label{nested} {\em  Say $M$ is any $\LL$-structure and let $(a_n:n\in\omega)$ be any sequence of singletons from $M$.  As notation, for each $n\in\omega$, let
$A_{<n}=\{a_j:j<n\}$ and let $q_n(x)=\qftp(a_n/A_{<n})$.  Call the sequence  {\em nested} if $q_n(x) \subseteq q_m(x)$ for every $n \leq m$. Alternatively, $(a_n: n \in \omega)$ is nested if for each $n$, $(a_i:i < n, a_n)$ and $(a_i: i < n, a_{n+1})$ have the same quantifier-free type.
We say that {\em $\Phi$ forbids nested sequences} if, for every $M\models \Phi$, every nested sequence from $M$ is eventually constant, i.e.,
for some $n$, $a_m=a_n$ for every $m\ge n$.  We say {\em $\Phi$ admits nested sequences} if the negation holds.  
}
\end{Definition}

This is an extremely restrictive condition on $\Phi$.  By compactness, we see that no first order theory $T$ with an infinite model forbids nested sequences.
We note two consequences of this property.  The first connects to classical  notions of $\alpha$-back-and-forth systems, at least when $\alpha=1$.

\begin{Definition}  {\em  Suppose $M,N$ are $\LL$-structures and $\abar\in M^n$,
$\bbar\in N^n$.  We say $(M,\abar)\equiv_0 (N,\bbar)$ if and only if $\qftp(\abar)=\qftp(\bbar)$;

For any ordinal $\alpha$, $(M,\abar)\equiv_{\alpha+1} (N,\bbar)$ if, for every $a^*\in M$ there is $b^*\in N$ such that
$(M,\abar a^*)\equiv_\alpha (N,\bbar, b^*)$ and vice versa; and 

For any non-zero limit ordinal $\delta$, $(M,\abar)\equiv_{\delta} (N,\bbar)$ if and only if $(M,\abar)\equiv_{\alpha} (N,\bbar)$ for every $\alpha<\delta$.
}
\end{Definition}

Here, we are only concerned with the case $\alpha=1$.

\begin{Definition} \label{SBembed} {\em  

We say $M\subseteq^*_1 N$ if $M$ is a substructure of $N$ such that, for all $\abar\in M^{<\omega}$, $(M,\abar)\equiv_1 (N,\abar)$.
A {\em 1-embedding} $f:M\preceq^*_1 N$ is an $\LL$-embedding for which $f(M)\subseteq^*_1 N$.
}
\end{Definition} 

It is easily seen that for $M\subseteq N$, $M\subseteq^*_1 N$ if and only if for every $\abar\in M^{<\omega}$ and every $b\in N$, there is $a^*\in M$
such that $\qftp(\abar a^*)=\qftp(\abar b)$ if and only if `$M$ is relatively $\omega$-saturated in $N$ for quantifier-free types.'

%**** The first what? and connection to cli isn't yet obvious. Maybe switch comment to before Corollary 2.5
%LLLL  This sentence is a dinosaur.   I took the liberty of deleting the sentence altogether (which is what I had intended). 
% The first shows that it is a strengthening of Su Gao's cli condition on automorphism groups.  

\begin{Proposition}  \label{useforbid}    Suppose $f:M\preceq^*_1 N$, where $M$ forbids nested sequences.  Then $f$ is onto.
\end{Proposition}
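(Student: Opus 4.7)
The plan is to prove the contrapositive: assuming $f$ is not onto, I will construct a nested sequence in $M$ that is not eventually constant. Without loss of generality, identify $M$ with its image $f(M)$, so that $M \subseteq^*_1 N$ and there exists some $b \in N \setminus M$, which will serve as a ``target'' that the nested sequence approximates. All quantifier-free types below are computed in $N$; since $M$ is a substructure of $N$, these agree with the corresponding qftp's computed in $M$ on tuples from $M$, so producing a nested sequence in $N$ from elements of $M$ will suffice.

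The construction is inductive. I build $a_0, a_1, \ldots \in M$ satisfying the coherence condition
\[
(*)_n \quad : \quad \qftp(a_0, \ldots, a_{n-1}, a_n) = \qftp(a_0, \ldots, a_{n-1}, b)
\]
for each $n \geq 0$. At stage $n$, given $a_0, \ldots, a_{n-1}$, I apply the characterization of $M \subseteq^*_1 N$ as relative $\omega$-saturation for quantifier-free types to the tuple $(a_0, \ldots, a_{n-1}) \in M^n$ and the element $b \in N$ to produce some $a_n \in M$ witnessing $(*)_n$.

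To see the resulting sequence is nested, I restrict $(*)_{n+1}$ by dropping the variable in position $n$; this yields $\qftp(a_0,\ldots,a_{n-1}, a_{n+1}) = \qftp(a_0,\ldots,a_{n-1}, b)$, which combined with $(*)_n$ gives $\qftp(a_0, \ldots, a_{n-1}, a_n) = \qftp(a_0, \ldots, a_{n-1}, a_{n+1})$, precisely the alternative formulation of nestedness in Definition~\ref{nested}.

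Finally, if the sequence were eventually constant, with $a_m = a_n$ for all $m \geq n$, then $(*)_{n+1}$ applied with $a_{n+1} = a_n$ puts the formula ``the last two coordinates agree'' into $\qftp(a_0, \ldots, a_n, b)$, whence $b = a_n \in M$, contradicting the choice of $b$. This produces a nested, non-eventually-constant sequence in $M$, contradicting the hypothesis that $M$ forbids nested sequences. I expect the only delicate point to be the bookkeeping that extracts nestedness from the $(*)_n$ conditions --- one must carefully match the qftp equalities obtained by variable-restriction to the exact formulation of nested, but no further ideas beyond iterated use of the relative $\omega$-saturation are needed.
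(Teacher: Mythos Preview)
Your proof is correct and follows essentially the same approach as the paper: both reduce to $M\subseteq_1^* N$, pick $b\in N\setminus M$, and iteratively use relative $\omega$-saturation for quantifier-free types to build $a_n\in M$ with $\qftp(a_0,\dots,a_{n-1},a_n)=\qftp(a_0,\dots,a_{n-1},b)$, then check nestedness and non-eventual-constancy. Your verification that the sequence is not eventually constant (via the formula $x_n=x_{n+1}$) is slightly more explicit than the paper's one-line ``since $b\notin M$, the sequence is non-constant,'' but the underlying argument is identical.
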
 

\begin{proof} We can suppose $M \subseteq_1^* N$. By way of contradiction, suppose there is some $b\in N\setminus M$.  We construct a non-constant, nested sequence $(a_n:n\in\omega)\subseteq M$
as follows.  Suppose $(a_i:i<n)\subseteq M$ have been chosen.  As $f(M)\subseteq^*_1 N$, choose $a_n\in M$ such that $\qftp (a_i:i\le n)=\qftp (a_i:i<n)b$.
As $b$ realizes $\tp(a_n/A_{<n})$ for each $n$, we have $q_n(x)\subseteq q_m(x)$ for all $m\ge n$, so the sequence $(a_n:n\in\omega)$ is nested.  Since $b\not\in M$,
the sequence is non-constant as well.
\end{proof}

In \cite{GaoCLI}, Su Gao investigated automorphism groups $\Aut(M)$ that admit a compatible, left-invariant, complete metric.  Such groups are now called {\em cli}.
Gao~\cite{GaoCLI}
proves that for a countable structure $M$, $\Aut(M)$ is cli if and only if $M$ has no proper $L_{\omega_1,\omega}$-elementary extension if and only if every $L_{\omega_1,\omega}$-embedding
$f:M\rightarrow N$ is onto.  

\begin{Corollary}   \label{forbidcli}  If $\Phi$ forbids nested sequences of types, then $\Aut(M)$ is cli for  every countable $M\models\Phi$.
\end{Corollary}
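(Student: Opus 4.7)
The plan is to invoke Gao's characterization stated just above: for a countable structure $M$, $\Aut(M)$ is cli iff every $L_{\omega_1,\omega}$-elementary embedding $f:M\rightarrow N$ is surjective. So fix a countable $M\models\Phi$ and such an embedding $f$; I aim to show $f$ is onto. The strategy is to reduce to Proposition~\ref{useforbid} by showing that any $L_{\omega_1,\omega}$-elementary embedding is in particular a 1-embedding in the sense of Definition~\ref{SBembed}.

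For this reduction, fix $\bar a\in f(M)^{<\omega}$ and $b\in N$. The quantifier-free type $\qftp^N(\bar a,b)$ is a countable set of quantifier-free formulas in one free variable over $\bar a$, so $\exists x\,\bigwedge\qftp^N(\bar a,x)$ is an $L_{\omega_1,\omega}$-formula with parameters from $\bar a$ that holds in $N$. By $L_{\omega_1,\omega}$-elementarity it also holds in $f(M)$, witnessed by some $a^*\in f(M)$. For any quantifier-free $\psi(\bar x,y)$ with $N\models\neg\psi(\bar a,b)$, we have $\neg\psi\in\qftp^N(\bar a,b)$, forcing $f(M)\models\neg\psi(\bar a,a^*)$. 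Hence $\qftp^{f(M)}(\bar a,a^*)=\qftp^N(\bar a,b)$, so $f(M)\subseteq^*_1 N$ by the equivalent formulation given just after Definition~\ref{SBembed}.

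Since $M\models\Phi$ and $\Phi$ forbids nested sequences, $M$ itself forbids nested sequences. Proposition~\ref{useforbid} applied to the 1-embedding $f:M\preceq^*_1 N$ now yields that $f$ is onto, completing the verification of Gao's criterion and hence the corollary. No real obstacle arises here: the content is essentially already contained in Proposition~\ref{useforbid}, and the only bookkeeping is the passage from full $L_{\omega_1,\omega}$-elementarity to the relative quantifier-free $\omega$-saturation condition that defines a 1-embedding.
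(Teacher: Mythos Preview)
Your proof is correct and follows essentially the same route as the paper: invoke Gao's characterization, observe that an $L_{\omega_1,\omega}$-elementary embedding is in particular a 1-embedding, and then apply Proposition~\ref{useforbid}. The only difference is that you spell out the verification of $f(M)\subseteq^*_1 N$ explicitly, whereas the paper simply asserts ``in particular, $M\subseteq_1^* N$'' and moves on.
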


\begin{proof}  By way of contradiction, suppose there were some $M\models\Phi$ with a proper, $L_{\omega_1,\omega}$ elementary extension $N\succ M$.
In particular, $M\subseteq_1^* N$, which contradicts Proposition~\ref{useforbid}.  Thus, $\Aut(M)$ is cli for every $M\models \Phi$ by Gao's theorem.
\end{proof}

We remark that  $M$ forbidding nested sequences is strictly stronger than $\Aut(M)$ being cli.   Indeed, the examples of Marcus and Knight mentioned above are
cli, but visibly do not forbid nested sequences. 

\medskip

In \cite{URL}, we investigate the Borel complexity of $\Phi$ by considering its {\em potential canonical Scott sentences} $\phi\in \CSS(\Phi)_\ptl$.  
These are the sentences $\phi$ of $L_{\infty,\omega}$ that become a canonical Scott sentence of some countable model $M$ in some (equivalently, any) forcing extension $\V[G]$
for which $\phi\in (L_{\omega_1,\omega})^{\V[G]}$.  The number of these sentences $||\Phi||:=|\CSS(\Phi)_\ptl|$  (possibly a proper class) measures  the Borel complexity   of 
$\Mod(\Phi)$, the class of countable models of $\Phi$.  
Theorem~3.10(2) of \cite{URL} states that if $\Phi$ is Borel reducible to $\Psi$, then $||\Phi||\le||\Psi||$.

If $\phi$ is a canonical Scott sentence -- that is, $\phi\in \CSS(\Phi)_\ptl$ -- then we can identify $\phi$ with the set $S^{<\omega}_\infty(\phi):=\{\css(M,\abar):\abar\in M^{<\omega}\}$
where $M$ is some (equivalently, any)  countable $M\models\phi$ occurring in any forcing extension $\V[G]$ in which $\phi\in( L_{\omega_1,\omega})^{\V[G]}$.  As this set does not depend
on the choice of either $G$ or $M$, it follows by the product forcing lemma (see e.g., Lemma~2.5 of \cite{URL})  that the set $S^{<\omega}_\infty(\phi)\in \V$.
For each $n\in\omega$, we refer to elements of $S^n_{\infty}(\phi)$ as infinitary types $p(x_0,\dots,x_{n-1})$.  When discussing such types, the following notation will be helpful.

\begin{Notation} \label{swap} {\em  For any $n\ge 1$ and any  type  $r(x_0,\dots,x_{n})$, we distinguish two associated types $\pi(r)(x_0,\dots,x_{n-1})$ and $\pi^*(r)(x_0,\dots,x_{n-1})$.  $\pi(r)$ is simply the projection $r\mr{x_0,\dots,x_{n-1}}$ onto the first $n$ coordinates, while $\pi^*(r)$ is obtained by first swapping the roles of $x_{n-1}$ and $x_n$, and then taking the projection.
That is, for any formula $\delta$,
$$\delta(x_0,\dots,x_{n-2},x_{n-1})\in\pi^*(r) \quad \hbox{if and only if} \quad \delta(x_0,\dots,x_{n-2},x_n)\in r$$
}
\end{Notation}

As an example of this usage, for any $\LL$-structure $M$ and any $\omega$-sequence $(a_n:n\in\omega)$ from $M$,
the sequence $(a_n:n\in\omega)$ is nested if and only if $\pi(r_n)=\pi^*(r_n)$ for every $n\ge 1$, where $r_n:=\qftp(a_0,\dots,a_n)$.

 Note that $S^{<\omega}_\infty(\phi)$ has 
an `amalgamation' property:  for every $n\in\omega$, every infinitary type $s(x_0,\dots,x_{n-1})\in S^n_\infty(\phi)$ and  all extensions $p(x_0,\dots,x_n), q(x_0,\dots,x_n)\in S^{n+1}_\infty(\phi)$, there is at least one 
$r(x_0,\dots,x_{n+1})\in S^{n+2}_\infty(\phi)$ such that $\pi(r)=p$ and $\pi^*(r)=q$.
Furthermore,  if $p\neq q$, then $(x_n\neq x_{n+1})\in r$ for any such amalgam $r$.

To see this, fix any forcing extension $\V[G]$ in which $\phi$ is hereditarily countable and, in $\V[G]$, choose any countable $M\models \phi$. Now, given $s,p,q$ as above, since $p,q$ extend $s$,
there are $\abar\in M^n$, $b,c\in M$ such that $p=\css(M,\abar,b)$ and $q=\css(M,\abar,c)$.  Then $r=\css(M,\abar,b,c)$ is an amalgam.

\begin{Theorem}\label{topTrivialTheorem}
Suppose $\Phi\in L_{\omega_1,\omega}$ forbids nested sequences.  Then:
\begin{enumerate}
\item   Every $\phi\in \CSS_\ptl(\Phi)$ is a sentence of $\mathcal{L}_{(2^{\aleph_0})^+, \omega}$;
\item The Scott rank  $SR(\phi) < (2^{\aleph_0})^+$; 
\item  $||\Phi||\le\beth_2$;
\item  `Countable sets of countable sets of reals' do not Borel embed into $\Mod(\Phi)$.
\end{enumerate}
Moreover, since $\Phi$ forbidding nested sequences is preserved under expansions of the language, the same results hold for any expansion of $\Phi$.
\end{Theorem}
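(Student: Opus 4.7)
The crux is item (1); items (2)--(4) are routine counting consequences once (1) is in hand.

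For (1), the plan is to show $|S^n_\infty(\phi)| \le 2^{\aleph_0}$ for every $\phi \in \CSS_\ptl(\Phi)$ and every $n \in \omega$. Granted this bound, $\phi$ --- recursively assembled by conjunctions and disjunctions over these type sets --- has size $\le 2^{\aleph_0}$ as a formula, placing it in $L_{(2^{\aleph_0})^+, \omega}$.

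The type-count bound is by contradiction. Assume for some minimal $n$, some $\phi \in \CSS_\ptl(\Phi)$ has $|S^n_\infty(\phi)| > 2^{\aleph_0}$; then by a pigeonhole over the extension tree, some $s \in S^{n-1}_\infty(\phi)$ has $|E(s)| > 2^{\aleph_0}$, where $E(s)$ denotes the set of extensions of $s$ in $S^n_\infty(\phi)$. The plan is to use the amalgamation property iteratively to build, in a countable $M \models \phi$ sitting in a suitable forcing extension, an infinite nested, non-constant sequence $(a_k)_{k < \omega}$, thereby contradicting the forbid-nested hypothesis. Inductively, at stage $k$ we require the type $r_k := \qftp(a_0, \ldots, a_k)$ to be a self-amalgam satisfying $\pi(r_k) = \pi^*(r_k)$ (ensuring nestedness), while also satisfying $(x_{k-1} \ne x_k) \in r_k$ for infinitely many $k$ (ensuring non-constancy). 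The amalgamation property supplies each ingredient separately: self-amalgams give nestedness, and amalgams of distinct extensions give the $x_{k-1} \ne x_k$ clause. The bridge is a pigeonhole exploitation of the $>2^{\aleph_0}$-extension abundance at the base to locate, along the chain, some type $p$ admitting multiple realizations over its projection, producing a non-trivial self-amalgam with the desired distinct final coordinates.

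Items (2)--(4) follow by standard arguments. For (2), any $L_{\kappa, \omega}$-sentence has Scott rank strictly below $\kappa$; apply with $\kappa = (2^{\aleph_0})^+$. For (3), each $\phi \in \CSS_\ptl(\Phi)$ has underlying formula set of size $\le 2^{\aleph_0}$ by (1), and the count of such subsets of $L_{\infty, \omega}$-formulas in $V$ is bounded by $2^{2^{\aleph_0}} = \beth_2$. For (4), by Theorem~3.10(2) of \cite{URL}, a Borel embedding of `countable sets of countable sets of reals' into $\Mod(\Phi)$ would yield a $||\cdot||$-inequality forcing the $||\cdot||$-value of this class to be $\le ||\Phi|| \le \beth_2$, contradicting its known value exceeding $\beth_2$. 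The moreover clause is immediate: any nested sequence in an expansion of $\Phi$ is nested in the reduct (expansion can only refine quantifier-free types), so forbid-nested passes to every expansion.

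The principal obstacle is the combinatorial construction in (1). The amalgamation property yields nestedness and non-constancy individually but not jointly --- nestedness demands $\pi(r) = \pi^*(r)$, while the amalgamation property guarantees $(x_n \ne x_{n+1}) \in r$ only when amalgamating \emph{distinct} types, which breaks the self-amalgam condition. Simultaneously achieving both conditions along an $\omega$-chain is the central technical difficulty, which the plan addresses by leveraging the $>2^{\aleph_0}$-extension abundance to force multiple-realization types somewhere along the chain.
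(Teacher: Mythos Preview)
Your high-level outline is right: the reductions (1)$\Rightarrow$(2),(3),(4) and the ``moreover'' clause are fine, and you have correctly located the heart of the matter in the combinatorial construction for (1). But your construction has a real gap, and your final paragraph essentially acknowledges the obstacle without closing it.

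The problem is your framing of nestedness as requiring a ``self-amalgam'' with $\pi(r_k)=\pi^*(r_k)$. That equality is too strong: nestedness is a condition on \emph{quantifier-free} types, whereas the amalgamation property (and the $x_n\ne x_{n+1}$ clause it delivers) lives at the level of \emph{infinitary} types in $S^{<\omega}_\infty(\phi)$. The resolution is precisely to decouple these two levels. One maintains, at each stage $m$, a set $X_m\subseteq S^{k+m+1}_\infty(\phi)$ of size $>2^{\aleph_0}$ consisting of \emph{distinct} infinitary types that all share the \emph{same} quantifier-free type, together with a distinguished $p_m\in X_m$. To advance, amalgamate $p_m$ with each $q\in X_m\setminus\{p_m\}$: for each resulting $r$ one has $\pi(r)=p_m\ne q=\pi^*(r)$ as infinitary types, so $(x_{k+m}\ne x_{k+m+1})\in r$; but $p_m$ and $q$ have the same qf-type, so the qf-reducts of $\pi(r)$ and $\pi^*(r)$ agree, which is exactly what nestedness requires. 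Thinning this new family by a $\le 2^{\aleph_0}$-pigeonhole on qf-types produces $X_{m+1}$, still of size $>2^{\aleph_0}$. Note that the $>2^{\aleph_0}$ hypothesis is used at \emph{every} stage to survive the thinning, not merely ``somewhere along the chain'' as you suggest.

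Your sketch never separates infinitary types from their qf-reducts, and the phrase ``non-trivial self-amalgam'' indicates you are looking for a single type $p$ with an amalgam $r$ satisfying both $\pi(r)=\pi^*(r)=p$ and $x_n\ne x_{n+1}$ --- which is precisely the tension you flag and which cannot be resolved without the two-level distinction above.
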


\begin{proof}  (2) follows from (1) by the definitions, and (3) follows from (1) since $\LL$ is countable.  (4) follows from (3) by Theorem~3.10(2) of \cite{URL}, since
`countable sets of countable sets of reals' has $\beth_3$ potential canonical Scott sentences.

So, it suffices to prove (1).  
By Proposition 4.6 of \cite{URL},  it is enough to show that $|S^{<\omega}_\infty(\phi)| \leq 2^{\aleph_0}$. 
%(Recall $S^n_\infty(\phi) := \{\css(M, \overline{a}): \overline{a} \in M^n\}$ where $M$ is a forced model of $\phi$.) 
Suppose towards a contradiction this were not the case.  Choose a forcing extension $\V[G]$ in which $\phi\in HC^{\V[G]}$ and, in $\V[G]$, choose a countable $M\models\phi$.
By Shoenfield absoluteness, $\Phi$ also forbids nested sequences in $\V[G]$, so we will obtain a contradiction by constructing a nested sequence $(a_n:n\in\omega)$ of distinct elements inside $M$.

Working in $\V$, as we are assuming $|S^{<\omega}_\infty(\phi)|>\beth_1$, and
since $S^0_\infty(\phi)$ is a singleton, 
choose a $k\in\omega$ and $s(x_0,\dots,x_{k-1})\in S^k_\infty(\phi)$ that has $>\beth_1$ extensions to $S^{k+1}_\infty(\phi)$.  
From this, we recursively construct sets $X_m\subseteq S^{k+m+1}_\infty(\phi)$ and types $p_m\in X_m$ satisfying:
%YYYY Deleted redundant information, see below for old version
\begin{enumerate}
\item  $|X_m|>\beth_1$;
\item  Any $p,q\in X_m$ have the same  quantifier-free type;
\item   For every $r\in X_{m+1}$, $\pi(r)=p_m$ and 
$\pi^*(r)\in X_m$, but $\pi(r)\neq \pi^*(r)$. (In particular $p_m \subseteq p_{m+1}$.)
\end{enumerate}

To begin the construction,  as there are only $\beth_1$ quantifier-free types in $S^{k+1}_\infty(\phi)$, 
by our choice of $s$,
choose a subset $X_0$ of extensions of $s$, all with the same quantifier-free type and such that $|X_0| > \beth_1$, and choose $p_0\in X_0$ arbitrarily.
Now, assume $X_m$ and $p_m$ have been chosen.  For each $q\in X_m\setminus\{p_m\}$, choose an amalgam $r_q\in S^{k+m+2}_\infty(\phi)$ such that $\pi(r_q)=p_m$ and $\pi^*(r_q)=q$. To see this is possible, note that when $m = 0$, both $p_m$ and $q$ extend $s$, and when $m > 0$, both $p_m$ and $q$ extend $p_{m-1}$. Now let $Y_{m+1}=\{r_q:q\in X_m\setminus\{p_m\}\}$.
Since the map $q\mapsto r_q$ is injective, $|Y_{m+1}|>\beth_1$, so choose a subset $X_{m+1}\subseteq Y_{m+1}$,
all of whom have the same quantifier-free type, of size $>\beth_1$.

% first choose $p_0\in X_0$ arbitrarily.  For each $q\in X_0\setminus \{p_0\}$, choose an extension $r_q\supseteq q$ with $r_q\in S^{k+1}_\infty(\phi)$
% and let $Y_1=\{r_q:q\in X_0\setminus\{p_0\}\}$.  Since $q\mapsto r_q$ is injective, $|Y_1|>\beth_1$, so choose a subset $X_1\subseteq Y_1$ all having the same quantifier free type.
% Now assume $m\ge 1$ and $X_m$ satisfies the above conditions.  Choose $p_m\in X_m$ arbitrarily.   For any $q\in X_m\setminus\{p_m\}$ we have that both $p_m$ and $q$ extend
% $p_{m-1}$, so choose an amalgam $r_q\in S^{k+m+1}_\infty(\phi)$ with $p_m\subseteq r_q$ and $q\subseteq r_q{y_{m-1}\choose y_m}$.
% Let $Y_{m+1}=\{r_q:q\in X_m\setminus\{p_m\}\}$ and choose a subset $X_{m+1}\subseteq Y_{m+1}$ of size $>\beth_1$ of elements realizing the same quantifier free type.

Now, forgetting about the witnessing sets $X_m$, we have constructed a sequence $(p_m\in S^{k+m+1}_\infty(\phi):m\in\omega)$ such that each extends $s$;
$p_m\subseteq p_{m+1}$;  $p_m(\xbar,y_i:i\le m)\vdash \bigwedge_{i<j\le m} y_i\neq y_j$; and the types $p_m=\pi(p_{m+1})$ 
and $\pi^*(p_{m+1})$ have the same quantifier free type.  

Finally, we pass this data to $\mathbb{V}[G]$, recalling that every type in $S^{<\omega}_\infty(\phi)$ is of the form $\tp_M^\infty(\cbar)$ for some $\cbar\in M^{<\omega}$.
Choose $\bbar\in M^k$ so that $s=\tp_M^\infty(\bbar)$ and recursively choose $a_m\in M$ so that $p_m=\tp_M^\infty(\bbar,a_i:i\le m)$.
Then $(a_n:n\in\omega)$ is a nested sequence of distinct elements of $M$, giving our contradiction.
\end{proof}

On the stronger side, one can talk ask whether a sentence $\Phi\in L_{\omega_1,\omega}$ contains models with {\em arbitrarily long} non-constant sequences.   By Theorem~5.6 of \cite{expansions}, this property implies
that $S_\infty$ divides $\Aut(M)$ for some $M\models \Phi$, which is a notion introduced by
 Hjorth~\cite{Hjorth}.  

\begin{Definition}  {\em For topological groups $G,H$, we say $H$ {\em divides} $G$ if there is a closed subgroup $G^*\le G$
and a continuous, surjective homomorphism $\pi:G^*\rightarrow H$.}
\end{Definition}

Of special interest is when $G=\Aut(M)$ for some countable $\LL$-structure and $H=S_\infty= \Sym(\omega)$.

With Theorem~5.5 of \cite{expansions}, we give many equivalents of a sentence $\Phi\in L_{\omega_1,\omega}$ having
a model $M$ for which $S_\infty$ divides $\Aut(M)$.  Shaun Allison considers this notion locally, i.e., when $G=\Aut(M)$
is fixed, and gives several other equivalents.  We isolate a notion that was unnamed, but  present in both \cite{URL} and \cite{modules}, that gives yet another equivalent to $S_\infty$ dividing $\Aut(M)$.

%LLLL- Got rid of the ZZZ above by changing the final two sentences above.  

\begin{Definition} \label{abs} {\em A countable structure $M$ {\em admits absolutely indiscernible sets} if there are disjoint subsets $\{D_n:n\in\omega\}$ of $M$
such that, for every permutation $\sigma\in \Sym(\omega)$, there is an automorphism $\sigma^*\in\Aut(M)$ such that $\sigma^*[D_n]=D_{\sigma(n)}$ for 
every $n\in\omega$.  For $\Phi\in L_{\omega_1,\omega}$ we say {\em $\Phi$ admits absolutely indiscernible sets} if some countable $M\models \Phi$ does.
}
\end{Definition}

The hard direction of the following Fact is implicit in the proof of Allison's 
Theorem~3.7 from \cite{Allison}.

\begin{Fact} \label{Allisonthm}
Let $M$ be any countable structure.  Then $S_\infty$ divides $\Aut(M)$ if and only if $M$ admits absolutely indiscernible sets.
\end{Fact}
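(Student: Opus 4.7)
The plan is to prove the two directions separately; the reverse direction essentially unpacks the definition, while the forward direction requires constructing a carefully chosen invariant subset of $M$. For the reverse direction $(\Leftarrow)$, suppose $\{D_n:n\in\omega\}$ witnesses that $M$ admits absolutely indiscernible sets. I would let $G^*$ be the set of $\tau \in \Aut(M)$ such that $\tau[D_n] = D_{\sigma(n)}$ for some (necessarily unique, by disjointness of the $D_n$) $\sigma \in \Sym(\omega)$, and define $\pi : G^* \to S_\infty$ by $\pi(\tau) = \sigma$. Four checks suffice: (i) $G^*$ is a closed subgroup of $\Aut(M)$; closedness follows by fixing any $d_0\in D_n$ and noting that ``$\tau[D_n] \subseteq D_m$ for some $m$'' is the clopen condition $\tau(d_0) \in \bigcup_m D_m$ together with the closed condition that no two elements of $\tau[D_n]$ land in distinct $D_{m_1}, D_{m_2}$; (ii) $\pi$ is a group homomorphism; (iii) $\pi$ is continuous, because $\pi(\tau)(n) = m$ iff $\tau(d_0) \in D_m$, a clopen event; and (iv) $\pi$ is surjective by hypothesis.

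For the forward direction $(\Rightarrow)$, let $\pi : G^* \twoheadrightarrow S_\infty$ be a continuous surjection from a closed $G^* \leq \Aut(M)$, and set $H := \pi^{-1}(\stab(0))$, the preimage of the stabilizer of $0\in\omega$. Then $H$ is open in $G^*$ and $g H \mapsto \pi(g)(0)$ identifies $G^*/H$ with $\omega$ as a $G^*$-set. My target is to produce an $H$-invariant $D_0 \subseteq M$ whose setwise $G^*$-stabilizer equals $H$ exactly. Given such $D_0$, choose representatives $g_n \in G^*$ with $\pi(g_n)(0) = n$ and set $D_n := g_n[D_0]$. These are pairwise disjoint, because any common element would force $g_n^{-1} g_m$ into the setwise stabilizer of $D_0$, hence into $H$, forcing $\pi(g_n)(0) = \pi(g_m)(0)$ and so $n = m$. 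For any $\sigma \in S_\infty$, any lift $\sigma^* \in G^*$ of $\sigma$ then satisfies $\sigma^*[D_n] = D_{\sigma(n)}$, so $M$ admits absolutely indiscernible sets.

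The crux, and the main obstacle, is constructing such a $D_0$. Since $H$ is open in $G^*$, it contains the pointwise stabilizer $G^*_{\bar{a}}$ of some finite tuple $\bar{a} = (a_1,\ldots,a_k)$ from $M$; the naive candidate $D_0 := H \cdot \{a_1,\ldots,a_k\}$ is automatically $H$-invariant, but its setwise $G^*$-stabilizer may strictly contain $H$. Following the strategy of Allison's Theorem~3.7 in \cite{Allison}, I would instead work with the orbit $O := G^* \cdot \bar{a} \subseteq M^k$, which since $G^*_{\bar{a}} \leq H$ admits a canonical $G^*$-equivariant surjection $O \twoheadrightarrow \omega$ with disjoint fibres $F_n \subseteq M^k$ permuted via $\pi$. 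The technical heart of the argument is descending from these tuple-level fibres to honestly disjoint subsets of $M$ itself: I anticipate this will require a minimal choice of $\bar{a}$ and a careful coordinate projection or set-theoretic combination applied to $F_0$, together with a verification that the projection separates distinct fibres and preserves both the $H$-invariance and the stabilizer equality required of $D_0$.
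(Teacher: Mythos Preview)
For the easy direction $(\Leftarrow)$, your argument and the paper's coincide: your $G^*$ is precisely $\Aut(M')$, where $M'$ is the expansion of $M$ by a unary predicate for $\bigcup_n D_n$ and a binary relation on it whose classes are the $D_n$; phrasing it this way makes closedness of $G^*$ automatic. The paper proves only this direction, deferring the hard direction entirely to Allison's Theorem~3.7.

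Your sketch of $(\Rightarrow)$ goes further than the paper but contains a genuine gap. You claim that if $D_0$ is $H$-invariant with setwise $G^*$-stabilizer exactly $H$, then the coset translates $D_n = g_n[D_0]$ are pairwise disjoint, arguing that a common element forces $g_n^{-1} g_m$ into the setwise stabilizer. That inference fails: a point $x \in D_n \cap D_m$ only gives $g_n^{-1}(x), g_m^{-1}(x) \in D_0$, not $g_n^{-1} g_m[D_0] = D_0$. For a concrete counterexample take $M = \omega$, $G^* = S_\infty$, $\pi = \mathrm{id}$, $H = \stab(0)$, and $D_0 = \omega \setminus \{0\}$: this $D_0$ is $H$-invariant with setwise stabilizer exactly $H$, yet the translates $D_n = \omega \setminus \{n\}$ all meet pairwise. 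What you actually need is the stronger condition $g[D_0] \cap D_0 = \emptyset$ for every $g \in G^* \setminus H$. You correctly identify that the tuple-level fibres $F_n \subseteq M^k$ are automatically disjoint and that the descent from $M^k$ to $M$ is the crux; but disjointness must be engineered during that descent, not recovered afterward from a stabilizer hypothesis. So the reduction in your second paragraph is illusory, and the entire burden sits in the construction of your third paragraph, which you leave as a plan rather than a proof.
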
  

\begin{proof}  (Easy direction).  Suppose $\{D_n:n\in\omega\}$ is a family of absolutely indiscernible sets from 
a countable $\LL$-structure $M$.
Let $\LL'=\LL \cup \{U,E\}$ let $M'$ be the expansion of $M$ formed by interpreting $U$ as $\bigcup\{D_n:n\in\omega\}$ and
$E$ as the equivalence relation on $U$ given by $E(a,b)$ iff $a,b$ are in the same $D_n$.
Then $\Aut(M')$ is a closed subgroup of $\Aut(M)$ and, as any $f\in\Aut(M')$ permutes the sets $\{D_n:n\in\omega\}$,
we get an induced map $\pi:\Aut(M')\rightarrow S_\infty$.  Clearly, $\pi$ is a continuous homomorphism, and
since $\{D_n\}$ is absolutely indiscernible, $\pi$ is a surjection.  
\end{proof}

By Theorem~5.5 of \cite{expansions}, we know that any $\Phi\in L_{\omega_1,\omega}$ admitting absolutely indiscernible sets has a Borel complete expansion, but for a given $\Phi$, it is of interest to know how 
much additional structure must be added to obtain a Borel complete class.
The following is a strengthening of Definition~\ref{abs}.

\begin{Definition} \label{cross-cutting} {\em A countable structure $M$ {\em admits cross-cutting absolutely indiscernible sets}
if there are $\Aut(M)$-invariant equivalence relations $E_0,E_1$ and families of sets $\D_*^0=\{D^0_n:n\in\omega\}$,
$\D^1_*=\{D^1_m:m\in\omega\}$ satisfying
\begin{enumerate}
    \item  For all $a\in\D^0_*$, $b\in D^1_*$, there is $c\in M$ such that $M\models E_0(c,a)\wedge E_1(c,b)$;
    \item  For all distinct $n\neq n'$, $a\in D^0_n$, $a'\in D^0_{n'}$ implies $M\models \neg E_0(a,a')$ (and dually for 
    $\D^1_*$ and $E_1$.)
    \item  For all pairs $\sigma_0,\sigma_1\in \Sym(\omega)$ there is some $\tau\in\Aut(M)$ such that for all
    $n,m\in\omega$, $\tau[D^0_n]=D^0_{\sigma_0(n)}$ and $\tau[D^1_m]=D^1_{\sigma_1(m)}$.
\end{enumerate}
We say that $\Phi\in L_{\omega_1,\omega}$ {\em admits cross-cutting absolutely indiscernible sets} if some countable
$M\models \Phi$ does.
}
\end{Definition}

By itself, $\Phi$ admitting cross-cutting absolutely indiscernible sets does not imply Borel completeness.
Indeed the complete first-order theory $T$ of cross-cutting equivalence relations $E_0,E_1$, each with infinite splitting
and with $[a]_{E_0}\cap [b]_{E_1}$ infinite for every $a,b$ is $\omega$-categorical, yet admits cross-cutting, absolutely indiscernible sets.\footnote{Given $M\models T$ countable, choose
disjoint sets $\{R_0,R_1\}$ from $M$ such that $R_i=\{d^i_n:n\in\omega\}$ is a set of $E_i$-representatives 
for $i=0,1$ and put $D^i_n:=\{d^i_n\}$ for every $n\in\omega$.}
However, Theorem~\ref{binary} shows that a rather mild class of expansions will be  Borel complete.
%YYYY Changed ``tame" to "mild" since tame already in use

\begin{Definition} \label{coloring} {\em Given a countable structure $M$, a {\em coloring of $M$} is a function $\cc:M\rightarrow\omega$.
Formally, when we write $(M,\cc)$, we are considering the expansion  of $M$ to the $\LL^*=\LL\cup\{U_n:n\in\omega\}$-structure
$M^*$, where $U_n(M^*)=\cc^{-1}(n)$ for each $n\in\omega$.  
Let $$\CC(M)=\{(M,\cc):\ \hbox{$\cc$ a coloring of $M$}\}\ \hbox{and} \ \CC(\Phi)=\{\CC(M):M\models\Phi\}$$
}
\end{Definition}

In preparation for the following theorem, we look at the class $\BP$ of all {\em bipartite graphs on $\omega\times\omega$}, 
which naturally correspond to subsets $R\subseteq \omega\times\omega$.  We say that two bipartite graphs $R, R'$ are isomorphic if there are permutations $\sigma_0, \sigma_1 \in \mbox{Sym}(\omega)$ such that for all $(n, m) \in \omega \times \omega$, we have $(n, m) \in R$ if and only if $(\sigma_0(n), \sigma_1(m)) \in R'$. 

Call a bipartite graph {\em reduced}
if, for all distinct $n\neq n'$, $\{m\in\omega: R(n,m)\}\neq \{m\in\omega: R(n',m)\}$ and dually, $\{n\in\omega:R(n,m)\}\neq\{n\in\omega:R(n,m')\}$ for distinct $m\neq m'$.  Let $\BP^*$ denote the set of reduced bipartite graphs 
$(\omega\times\omega,R)$.  It is well known that the class $(\BP^*,\cong)$ is Borel complete.

\begin{Theorem} \label{binary}   Suppose a countable structure $M$ admits cross-cutting absolutely indiscernible sets.
Then $\CC(M)$ is Borel complete.  
\end{Theorem}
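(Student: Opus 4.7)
The plan is to exhibit a Borel reduction from $(\BP^*,\cong)$, the class of reduced bipartite graphs on $\omega\times\omega$, which was just noted to be Borel complete. Let $E_0,E_1,\D^0_*=\{D^0_n\},\D^1_*=\{D^1_m\}$ be the witnesses given by Definition~\ref{cross-cutting}; for simplicity assume each $D^0_n=\{d^0_n\}$ and $D^1_m=\{d^1_m\}$ is a singleton (as in the $\omega$-categorical example following the definition). For each pair $(n,m)\in\omega\times\omega$ define the \emph{cell}
\[
C_{n,m}=\{c\in M:c\,E_0\,d^0_n\wedge c\,E_1\,d^1_m\}.
\]
Clause (1) of Definition~\ref{cross-cutting} forces each $C_{n,m}$ to be non-empty, while clause (2) together with transitivity of $E_0,E_1$ forces the cells to be pairwise disjoint (a common element would force $E_0$-equivalence between $d^0_n$ and $d^0_{n'}$).

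Given $R\subseteq\omega\times\omega$, define the coloring $\cc_R:M\to\{0,1,2\}\subseteq\omega$ by $\cc_R(c)=1$ if $c\in C_{n,m}$ for some $(n,m)\in R$, $\cc_R(c)=2$ if $c\in C_{n,m}$ for some $(n,m)\notin R$, and $\cc_R(c)=0$ otherwise. Because the cell partition of $M$ is determined by $M$ alone and the value $\cc_R(c)$ depends only on whether the unique cell (if any) containing $c$ is in $R$, the assignment $R\mapsto(M,\cc_R)$ is continuous, hence Borel.

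For the forward direction, if $(\sigma_0,\sigma_1)\in S_\infty\times S_\infty$ witnesses $R\cong R'$, clause (3) supplies $\tau\in\Aut(M)$ with $\tau(d^0_n)=d^0_{\sigma_0(n)}$ and $\tau(d^1_m)=d^1_{\sigma_1(m)}$. Since $E_0,E_1$ are $\Aut(M)$-invariant, $\tau(C_{n,m})=C_{\sigma_0(n),\sigma_1(m)}$, and the hypothesis $(n,m)\in R\Leftrightarrow(\sigma_0(n),\sigma_1(m))\in R'$ makes $\tau:(M,\cc_R)\to(M,\cc_{R'})$ a color-preserving automorphism. Conversely, any isomorphism $f:(M,\cc_R)\to(M,\cc_{R'})$ lies in $\Aut(M)$ and preserves both equivalence relations together with the colored set $X=\bigcup_{n,m}C_{n,m}$. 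The ``active'' $E_0$-classes meeting $X$ are exactly $[d^0_n]_{E_0}$ for $n\in\omega$, pairwise distinct by clause (2), so $f$ induces a permutation $\sigma_0\in S_\infty$ with $f([d^0_n]_{E_0})=[d^0_{\sigma_0(n)}]_{E_0}$, and analogously $\sigma_1$ for $E_1$. Then $f(C_{n,m})=C_{\sigma_0(n),\sigma_1(m)}$, and color preservation yields $(n,m)\in R\Leftrightarrow(\sigma_0(n),\sigma_1(m))\in R'$, as needed.

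The main subtlety is extracting the bipartite-graph isomorphism from an abstract color-preserving automorphism of $M$: the two non-trivial colors isolate the cells while the $\Aut(M)$-invariance of $E_0,E_1$ together with clause (2) lets us identify the active equivalence classes and read off $(\sigma_0,\sigma_1)$. Everything else is routine bookkeeping, and Borel completeness of $\CC(M)$ follows from that of $(\BP^*,\cong)$.
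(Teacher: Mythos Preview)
Your overall strategy matches the paper's: reduce from reduced bipartite graphs, carve out ``cells'' indexed by $\omega\times\omega$, and color each cell according to membership in $R$. The gap is your ``for simplicity'' assumption that each $D^i_n$ is a singleton; this is not without loss of generality, and the general case is exactly where the work lies.

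The obstruction is clause (3) of Definition~\ref{cross-cutting}: it only promises $\tau[D^0_n]=D^0_{\sigma_0(n)}$ as \emph{sets}. If you select representatives $d^0_n\in D^0_n$ and pass to the singleton family, nothing forces $\tau(d^0_n)=d^0_{\sigma_0(n)}$ --- it could land on any other element of $D^0_{\sigma_0(n)}$ --- so clause (3) can fail for the new family and your forward direction collapses. The paper instead replaces each $D^i_n$ by its $E_i$-saturation, which any $\tau\in\Aut(M)$ does respect. But after saturation a single $D^0_n$ may comprise several $E_0$-classes, and now your backward argument breaks: an isomorphism $f:(M,\cc_R)\to(M,\cc_{R'})$ permutes the $E_0$-classes inside $\D^0_*$, yet there is no immediate reason it should respect the coarser partition into the sets $D^0_n$. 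The paper closes this with a Claim showing that, precisely because $R$ is \emph{reduced}, membership in a given $D^0_n$ is recoverable from the coloring alone --- this is where reducedness is actually used, something your argument never invokes. Once you drop the singleton assumption you will need an analogue of that Claim.
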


\begin{proof}  Choose such an $M$ and fix $E_0,E_1,\D^0_*,\D^1_*$ witnessing this; let (1), (2), (3) refer to the items of the definition of crosscutting absolutely indiscernible sets.  We replace each $D^i_n$ by its $E_i$-saturation, i.e., replace $D^i_n$ by $\{a'\in M:M\models E_i(a',a)$ for some $a\in D^i_n\}$.  This does not disturb any of (1)--(3).  
It suffices to define a Borel reduction from $\BP^*$ into $\CC(M)$.  First, call an element $c\in M$ a {\em grid point}
if $c \in \D^0_* \cap \D^1_*$.     It follows from (1) and (2) that if $c\in M$ is a grid point,
then there are unique $n,m\in\omega$ with $c \in D^0_n \cap D^1_m$. We will always put $\cc(x)=0$ iff $x$ is not a grid point. As both $E_0,E_1$ are invariant, it follows that $\D^i_*$ will be invariant under any color-preserving isomorphism, i.e., sending grid points to grid points.

Now, suppose we are input a reduced $R\subseteq\omega\times\omega$.  We define the coloring $\cc_R:\omega\rightarrow\{0,1,2\}$ by putting $\cc_R(x)=0$ for all non-grid points, and for grid points $c \in D^0_n \cap D^1_m$, putting $\cc_R(c)=1$ if and only if $(n, m) \in R$, and put $\cc_R(c)=2$ otherwise. By the previous comments this is well-defined, since $(n, m)$ is uniquely determined by $c$.

The following Claim (along with the dual claim for $\D^1_*$) uses the fact that $R$ is reduced.

%YYYY Hopefully this is all OK (made changes to definition of grid point and wordsmithing around that)

\medskip\noindent{\bf Claim.}  Choose any $a,a'\in\D^0_*$.  Then $a,a'$ are in the same $D^0_n$ if and only if
for every $b\in \D^1_*$, $\cc_R(c)=\cc_R(d)$
for every $c\in [a]_{E_0}\cap [b]_{E_1}$ and  every $d\in [a']_{E_0}\cap [b]_{E_1}$.

\begin{proof}  $(\Rightarrow)$ Assume $a,a'\in D^0_n$ and choose $b,c,d$ as above.  Say $b\in D^1_m$.  
Then $\cc_R(c)=1$ iff $R(n,m)$ iff $\cc_R(d)=1$ by definition.  

$(\Leftarrow)$  Say $a\in D^0_n$, $a\in D^0_{n'}$ with $n\neq n'$.  As $R$ is reduced (and by symmetry), choose
$m$ such that $R(n,m)$ and $\neg R(n',m)$ hold. By (2), choose $c\in [a]_{E_0}\cap [b]_{E_1}$ and 
$d\in [a']_{E_0}\cap [b]_{E_1}$.  Then $\cc_R(c)=1$, while $\cc_R(d)=2$.
\end{proof}

To show the above is a Borel reduction, choose isomorphic $(\omega\times\omega,R)\cong (\omega\times\omega,S)$ from $\BP^*$.
Choose an isomorphism $(\sigma_0,\sigma_1)$ and choose $\tau\in\Aut(M)$ by (3).  Then 
$\tau:(M,\cc_R)\mapsto (M,\cc_S)$ is an $\LL^*$ isomorphism.  
Conversely, suppose $h:(M,\cc_R)\rightarrow (M,\cc_S)$ is an $\LL^*$-isomorphism.  
Then, using the Claim, define $\pi_0:\omega\mapsto \omega$ by letting $\pi_0(n)$ be the unique $n^*$ such that
for any $a\in D^0_n$, $h(a)\in D^0_{n^*}$, and defined $\pi_1(m)$ to be the unique $m^*$ such that for any $b\in D^1_m$,
$h(b)\in D^1_{m^*}$.  As $h$ is color preserving, it follows that for any $(n,m)\in\omega\times\omega$, for any
$a\in D^0_n$, $b\in D^1_m$, and $c\in [a]_{E_0}\cap [b]_{E_1}$, 
$R(n,m)$ iff $\cc_R(c)=1$ iff  $\cc_S(h(c))=1$ iff  $S(\pi_0(n),\pi_1(m))$.  Thus $(\pi_0,\pi_1)$ is a bipartite graph isomorphism.
\end{proof}

\section{Down-Finite Posets}

\begin{Definition}
{\em 
    Given a poset $(P,\le)$ and $Q \subseteq P$, let the {\em downward closure} of $Q$, $\dc(Q)$, be the set of all $p \in P$ such that $p \leq q$ for some $q \in Q$. We say that $Q$ is {\em downward closed} if $Q = \dc(Q)$. When $Q = \{q\}$ is a singleton we write $P_{\leq q}$ instead of $\dc(q)$.  
    
    Say that $(P,\le)$ is {\em down-finite} if for all $p \in P$, $P_{\leq p}$ is finite.

%LLLL-- Added to a sentence saying "(P,le) is down-finite"
%YYYY Simplified definition of height to remove requirement that chains be finite (when P is down-finite, definitions coincide). Also moved things around slightly
    For $\alpha$ an ordinal, a {\em chain of length $\alpha$} from $P$, or just an {\em $\alpha$-chain}, is a sequence $(p_\beta: \beta < \alpha)$ from $P$ with $p_\beta < p_{\beta'}$ for all $\beta < \beta' < \alpha$. We define the {\em height} of $P$, $\hgt(P)$, to be the supremum of lengths of chains from $P$. When $(P,\le)$
    is down-finite, every chain from $P$ is of length at most $\omega$, so $\hgt(P)$ is always finite or $\omega$.

    If $\hgt(P) < \omega$ we say that $(P,\le)$ is of {\em bounded height.} This includes the case whenever $P$ is finite.
     For every $q\in P$,  the {\em height of $q$}, $\hgt_P(q)$,
     is defined to be  $\hgt(P_{\leq q})$.   If $P$ is down-finite, then $\hgt(q) < \omega$ for every $q\in P$ (it is always $\le |P_{\le q}|$).
%  For a subset $Q\subseteq P$, we put $\hgt_P(Q):=\sup\{\hgt_P(q):q\in Q\}$.
% When $P$ is understood, we simply write $\hgt(q)$ and $\hgt(Q)$.
}
\end{Definition}  

The following fact is easy.

\begin{Fact}  \label{ht}  Suppose $(P, \leq)$ is down-finite. If $q \in P$ with $\hgt(q)=n$, then for every chain $(r_1,\dots,r_n)\subseteq P_{\le q}$ we must have
$r_n=q$ and, for every $k\le n$, $\hgt(r_k)=k$.  In particular, if $\hgt(q)=n$ and $k\le n$, then some $r\le q$ has $\hgt(r)=k$.  
% \item If $Q,R\subseteq P$ with $R$  downward closed and for some $n>0$, $Q\subseteq P_{n-1}$ and $R\cap P_n=Q$, then $R=Q$.
\end{Fact}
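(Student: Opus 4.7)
The plan is to prove Fact~\ref{ht} by directly unpacking the definition of height and exploiting that, since $P$ is down-finite, $P_{\le q}$ is a finite poset and the supremum defining $\hgt(q)=\hgt(P_{\le q})$ is actually attained. I would first record this: any chain in $P_{\le q}$ has length at most $|P_{\le q}|<\omega$, so $\hgt(q)$ is a maximum, witnessed by some chain from $P_{\le q}$ of length exactly $n$.

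For the first assertion, I would fix a chain $(r_1,\dots,r_n)\subseteq P_{\le q}$ of length $n$ and argue by contradiction that $r_n=q$: if $r_n<q$, then $(r_1,\dots,r_n,q)$ is a chain of length $n+1$ in $P_{\le q}$, contradicting $\hgt(q)=n$. For the height computation, fix $k\le n$. The subchain $(r_1,\dots,r_k)$ lives in $P_{\le r_k}$ and has length $k$, so $\hgt(r_k)\ge k$. For the reverse inequality, suppose toward contradiction that $\hgt(r_k)\ge k+1$, witnessed by a chain $(s_1,\dots,s_{k+1})\subseteq P_{\le r_k}$. Then $s_{k+1}\le r_k<r_{k+1}<\dots<r_n=q$, so splicing gives a chain $(s_1,\dots,s_{k+1},r_{k+1},\dots,r_n)$ of length $(k+1)+(n-k)=n+1$ in $P_{\le q}$, again contradicting $\hgt(q)=n$.

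For the ``in particular'' clause, given $\hgt(q)=n$ and $k\le n$, use the attainment of the maximum: there is at least one chain $(r_1,\dots,r_n)\subseteq P_{\le q}$ of length $n$, and by what was just shown, $r_k$ is an element of $P_{\le q}$ with $\hgt(r_k)=k$.

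There is no real obstacle here; the only subtlety is remembering that down-finiteness is what lets us treat the supremum in the definition of $\hgt(q)$ as a maximum, and making sure the splicing argument respects strict inequalities (which it does, since $s_{k+1}\le r_k<r_{k+1}$ forces $s_{k+1}<r_{k+1}$).
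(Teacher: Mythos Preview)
Your proposal is correct and follows essentially the same approach as the paper: both argue that $r_n\neq q$ would allow appending $q$ to get a chain of length $n+1$, and both obtain $\hgt(r_k)=k$ by noting $(r_1,\dots,r_k)$ gives the lower bound while splicing a longer chain in $P_{\le r_k}$ with the tail $(r_{k+1},\dots,r_n)$ would contradict $\hgt(q)=n$. Your version is slightly more explicit about down-finiteness guaranteeing the supremum is attained (which is what makes the ``in particular'' clause go through), but the argument is the same.
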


\begin{proof}  First note that if $r_n\neq q$, then concatenating $q$ at the end would give a chain of length $n+1$, which is forbidden.
More generally, for any $k\le n$, the chain $(r_1,\dots,r_k)$ witnesses that $\hgt(r_k)\ge k$.  However, if there were a longer chain
in $P_{\le r_k}$ we could concatenate $(r_{k+1},\dots,r_n)$ to it, contradicting $\hgt(q)=n$.
\end{proof}

\begin{Definition}\label{botDef}{\em
Let $(P,\le)$ be a down-finite poset.
Two subsets $Q,R\subseteq P$ are {\em orthogonal}, $Q\bot R$, if $\{q,r\}$ are incomparable for every $q\in Q$, $r\in R$.  

Say that $P$ is {\em narrow} if whenever $Q \bot R$ are orthogonal subsets of $P$, at least one of $Q,R$  
has bounded height. 
}
\end{Definition}

\begin{Lemma}
Suppose $P$ is narrow. Then $P$ does not admit a family of arbitrarily long pairwise orthogonal finite chains.
\end{Lemma}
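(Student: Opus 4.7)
The plan is to argue the contrapositive: assuming $P$ admits a family of pairwise orthogonal finite chains of arbitrarily large length, I will produce an orthogonal pair $Q \bot R$ in $P$ with both $Q$ and $R$ of infinite height, directly contradicting narrowness. By passing to a subfamily and relabeling, I may assume the given family is a sequence $(C_n : n \in \omega)$ of pairwise orthogonal finite chains with $\mathrm{length}(C_n) \to \infty$ (for instance, $\mathrm{length}(C_n) \ge n$).

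The construction is then immediate. Partition $\omega$ into two infinite sets $\omega = I_0 \sqcup I_1$ (say, the evens and the odds) and set
\[
Q \;=\; \bigcup_{n \in I_0} C_n \qquad \text{and} \qquad R \;=\; \bigcup_{n \in I_1} C_n.
\]
To check $Q \bot R$: any $q \in Q$ lies in some $C_n$ with $n \in I_0$ and any $r \in R$ lies in some $C_m$ with $m \in I_1$; since $n \neq m$, the pairwise orthogonality of the family forces $q$ and $r$ to be incomparable. To check both subsets have infinite height, note that for each $n \in I_0$ the chain $C_n$ is contained in $Q$ and has length $\mathrm{length}(C_n)$, so $\hgt(Q) \ge \sup_{n \in I_0} \mathrm{length}(C_n) = \omega$; symmetrically $\hgt(R) = \omega$. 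Thus neither $Q$ nor $R$ has bounded height, refuting narrowness.

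I expect no real obstacle; the only minor point is reading the hypothesis correctly, namely that the chains are finite but of unboundedly large length, so that one may extract an $\omega$-indexed subfamily whose lengths tend to infinity. Once that is in hand, the even/odd partition does all the work.
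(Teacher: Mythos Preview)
Your proof is correct and follows essentially the same approach as the paper: both argue the contrapositive by splitting the family of chains into the even-indexed and odd-indexed parts to obtain orthogonal $Q$ and $R$ each of unbounded height. Your write-up is simply a bit more detailed in verifying orthogonality and unbounded height.
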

\begin{proof}
    Suppose we had arbitrarily long pairwise orthogonal finite chains $(C_n: n < \omega)$, say $C_n$ is of length $n$. Let $Q = \{C_{2n}: n < \omega\}$ and let $R = \{C_{2n+1}: n < \omega\}$. Then $Q \bot R$ witnesses that $P$ is not narrow.
\end{proof}

\begin{Lemma}
    Suppose $(P, \leq)$ is down-finite, narrow, and of unbounded height. Then there is some $p \in P$ such that $P_{>p}$ has unbounded height.
\end{Lemma}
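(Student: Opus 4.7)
The plan is to argue by contradiction: assume $\hgt(P_{>p}) < \omega$ for every $p \in P$ (equivalently, $\hgt(P_{\ge p}) < \omega$ for every $p$, since the two differ by at most one). I will construct under this assumption a family of pairwise orthogonal finite chains of arbitrary lengths in $P$, contradicting narrowness by the immediately preceding lemma.

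The workhorse is an elementary observation about heights of unions: for any family $\{A_i : i \in I\}$ of subsets of a poset, the elements of any chain in $\bigcup_i A_i$ that happen to lie in $A_i$ form a sub-chain of $A_i$, so $\hgt(\bigcup_i A_i) \le \sum_i \hgt(A_i)$. Applied to the pair $\{F^*, P \setminus F^*\}$ for any subset $F^* \subseteq P$, this gives $\hgt(P) \le \hgt(F^*) + \hgt(P \setminus F^*)$.

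Using this, I first establish the sub-claim: for every finite $F \subseteq P$, the set $P \setminus F^*$ has unbounded height, where
\[
F^* \;:=\; \bigcup_{f \in F}\bigl(P_{\le f} \cup P_{\ge f}\bigr)
\]
is the set of elements of $P$ that are comparable with some element of $F$. Down-finiteness makes each $P_{\le f}$ finite, and the contradictory assumption makes each $P_{\ge f}$ of finite height, so the height-union bound gives
\[
\hgt(F^*) \;\le\; \sum_{f \in F}\bigl(|P_{\le f}| + \hgt(P_{\ge f})\bigr) \;<\; \omega.
\]
Combined with $\hgt(P) = \omega$ and $\hgt(P) \le \hgt(F^*) + \hgt(P \setminus F^*)$, this forces $\hgt(P \setminus F^*) = \omega$, as claimed.

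Now build the chains inductively. Suppose pairwise orthogonal chains $C_1, \dots, C_k \subseteq P$ with $|C_i| \ge i$ have been chosen, and let $F := C_1 \cup \dots \cup C_k$, a finite set. By the sub-claim, $P \setminus F^*$ contains a chain $C_{k+1}$ of length $k+1$. By the very definition of $F^*$, every element of $C_{k+1}$ is incomparable with every element of $F$, so $C_{k+1} \bot C_i$ for each $i \le k$. The resulting family $(C_n)_{n \ge 1}$ is an arbitrarily long sequence of pairwise orthogonal finite chains in $P$, which by the previous lemma contradicts narrowness of $P$. I do not expect a serious obstacle: the contradictory hypothesis is invoked only to bound $\hgt(P_{\ge f})$, and the rest of the argument is driven purely by down-finiteness and the height-union bound.
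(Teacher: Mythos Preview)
Your proof is correct and follows essentially the same approach as the paper: argue by contradiction, then inductively build pairwise orthogonal chains of arbitrary length to contradict narrowness via the preceding lemma. The only difference is tactical: in the inductive step, the paper takes a single long chain of length $2k+n$ and extracts its middle segment of length $n$, arguing directly that this segment is incomparable with $F$; you instead remove the full comparability cone $F^*$ of $F$, use the height sub-additivity observation to show $\hgt(F^*)<\omega$, and then take any long chain in the complement --- a slightly cleaner packaging of the same two inputs (finiteness of $P_{\le f}$ and bounded height of $P_{\ge f}$).
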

\begin{proof}
Suppose not. We construct a family of arbitrarily long pairwise orthogonal finite chains.

Suppose we have found pairwise orthogonal chains $(C_i: i < n)$ from $P$. It suffices to find an $n$-chain $C_n$ from $P$ which is orthogonal to each $C_i$, for $i < n$. Let $k$ be large enough so that for all $p \in \bigcup_{i < n} C_i$, $\hgt(P_{> p}) < k$ and $\hgt(p) \leq k$. Let $C' = (r_j: j < 2k+n)$ be a chain from $P$ of length $2k+n$.  Let $C_n = (r_j: k \leq j < k+n)$, a chain of length $n$. We claim that $C_n$ is as desired. Suppose $i < n$ and $p \in C_i$ and $k \leq j < k+n$. Then $p \not \leq r_j$, as otherwise $( r_{j+1}, \ldots, r_{2k+n-1})$ would be a chain of length at least $k$ above $p$, contradicting $\hgt(P_{> p}) < k$. Similarly, $r_j \not \leq p$, as otherwise $(r_0, r_1, \ldots, r_j)$ would be a chain of length at least $k+1$ from $P_{\leq p}$, contradicting $\hgt(p) \leq k$.
\end{proof}

\begin{Theorem}\label{omegaChain}  Suppose $(P, \leq)$ is down-finite, narrow, and of unbounded height. Then $P$ contains an $\omega$-chain.
\end{Theorem}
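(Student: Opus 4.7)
The plan is to iterate the preceding lemma. The preceding lemma produces a single element $p$ with $P_{>p}$ of unbounded height; to build an $\omega$-chain, I will repeatedly reapply it to posets of the form $P_{>q}$, provided these inherit the hypotheses.

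First I would verify two closure properties. For any $q \in P$, the poset $P_{>q}$ (with the induced order) is again down-finite, since for any $r \in P_{>q}$ we have $(P_{>q})_{\le r} \subseteq P_{\le r}$, which is finite. It is also narrow: given orthogonal $Q \bot R$ inside $P_{>q}$, they are also orthogonal as subsets of $P$, so one of them has bounded height in $P$; since for any $r\in P_{>q}$ every chain from $(P_{>q})_{\le r}$ is a chain from $P_{\le r}$, heights computed in $P_{>q}$ are bounded by heights in $P$, so the same set still has bounded height inside $P_{>q}$.

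With these closure properties in hand, the main construction is a straightforward recursion. Apply the previous lemma to $P$ to obtain $p_0 \in P$ with $P_{>p_0}$ of unbounded height. Inductively, suppose $p_0 < p_1 < \cdots < p_{n-1}$ have been chosen so that $P_{>p_{n-1}}$ is down-finite, narrow, and of unbounded height. Apply the previous lemma inside the poset $P_{>p_{n-1}}$ (justified by the closure properties above) to obtain some $p_n \in P_{>p_{n-1}}$ such that the set of elements strictly above $p_n$ within $P_{>p_{n-1}}$ has unbounded height. Because $p_n > p_{n-1}$, this set is exactly $P_{>p_n}$, so the inductive hypotheses carry over to $p_n$. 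The resulting sequence $(p_n : n \in \omega)$ is the desired $\omega$-chain.

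I do not expect a genuine obstacle here; the only point that requires a moment's care is the verification that narrowness and down-finiteness descend to $P_{>q}$ and that heights are computed consistently, so that the preceding lemma can be legitimately reapplied at each stage.
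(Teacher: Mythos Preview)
Your proposal is correct and takes essentially the same approach as the paper: both iterate the preceding lemma by verifying that $P_{>p}$ inherits down-finiteness, narrowness, and unbounded height, and then step upward. The paper packages the recursion as ``the set $P_*=\{p:P_{>p}\text{ has unbounded height}\}$ is nonempty and has no maximal element,'' while you build the chain directly, but the content is the same; if anything, you are slightly more careful in checking that all three hypotheses descend to $P_{>p}$.
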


\begin{proof}
Let $P_*$ be the set of all $p \in P$ such that $P_{>p}$ has unbounded height. 
The preceding lemma says that $P_*$ is nonempty. We show that no 
element of $P_*$ is maximal, from which it follows that $P_*$ contains an $\omega$-chain.  

 Choose any $p \in P_*$. Then $(P_{>p},\le)$ is narrow, since $(P,\le)$ is, and is of unbounded height. Hence, by the preceding lemma, there is $q \in P_{>p}$ with $P_{>q}$ of unbounded height, hence $q \in P_*$ and $q > p$.
\end{proof}
%LLLL-- Slightly reorganized proof above.  

\section{An interesting family of mutually algebraic theories}

\begin{Definition}  \label{start} {\em 

 Let $\PP$ consist of all triples $(P,\le,\delta)$ satisfying:
 \begin{itemize} 
 \item $(P,\le)$ is a countable, down-finite poset;
\item   $\delta:P\rightarrow\omega\setminus\{0,1\}$ is an arbitrary function.
\end{itemize}
Let  $\LL_P=\{E_q:q\in P\}$ with each $E_q$ a binary relation symbol, and for each $(P,\le,\delta)\in\PP$ let:
\begin{itemize}
\item  $\F(P,\le,\delta):=\prod_{q\in P} \delta(q)$; and
\item  $\MM(P,\le,\delta)$ is the $\LL_P$-structure with universe $\F(P,\le,\delta)$ where, for each $q\in P$, $E_q$ is interpreted as:
$$E_q(f,f') \quad \Longleftrightarrow  \bigwedge_{q'\le q} f(q')=f'(q')$$
\end{itemize}
Note that $\F(P,\le,\delta)$ does not depend on the partial order structure, but $\MM(P,\le,\delta)$ certainly does.
When $\le$ and $\delta$ are clear from context (which is usually the case) we will write just $\mathcal{F}_P, \MM_P$.
}

\end{Definition}

If $P$ is finite, then $\F_P$ and $\MM_P$ will be finite, and thus uninteresting.  
However, when $(P,\le,\delta)\in \PP$ and $P$ is (countably) infinite, then $|\F_P|$ has cardinality $2^{\aleph_0}$ and is a compact Polish space when endowed with
 the Tychonoff topology.
 It is easily seen that any such $\MM_P$ satisfies the following axiom schemes, which we dub $T(P,\le,\delta)$ (or just $T_P$).
 
 \begin{itemize}
 \item  Each $E_q$ is an equivalence relation;
 \item  If $q'\le q$, then $E_q$ refines $E_{q'}$ and, moreover, letting $E_{<q}(x,y):=\bigwedge_{q'<q} E_{q'}(x,y)$,
 then $E_q$ partitions each $E_{<q}$-class into precisely $\delta(q)$ classes.  [Since $\{q'\in P:q'<q\}$ is finite, $E_{<q}$ is a finite conjunction of atomic $\LL_P$-formulas.]
 \item If $Q\subseteq P$ is downward closed and finite,  then for every sequence $(a_q:q\in Q)$ satisfying
 $E_{q'}(a_{q'},a_q)$ for all $q'\le q$, there is $a^*$ such that $E_q(a^*,a_q)$ for every $q\in Q$.
 \end{itemize}
 
 It is easily checked that when $P$ is infinite, then the  axioms $T_P$  admit elimination of quantifiers in the language $\LL_P$, and hence generate the complete theory of $\MM_P$.
 As a consequence, when $P$ is infinite, then $T_P$ is mutually algebraic, and moreover, for any $M\models T_P$, $\acl(A)=A$ for every $A\subseteq M$ and $x=x$ generates a complete 1-type. Further, in any $M \models T_P$, the group of elementary permutations of $\mbox{acl}^{eq}(\emptyset)$ is topologically isomorphic to $\mbox{Aut}(\MM_P)$; in particular the latter is a compact Polish group.
 %YYYY Added ``further" clause
 
 In describing the Borel complexity of countable models of such a $T(P,\le,\delta)$, it is useful to consider the following universal sentence $\Psi\in L_{\omega_1,\omega}$ 
 $$\forall x\forall y \left(\bigwedge_{q\in P} E_q(x,y)\rightarrow x=y\right)$$
Let $\Phi(P,\le,\delta):= T(P,\le,\delta)\cup\{\Psi\}$.  
 Visibly, $\MM(P, \leq, \delta) \models\Phi(P,\le,\delta)$.  

 We establish some initial consequences.

 \begin{Fact} \label{check}   Fix any $(P,\le,\delta)\in\PP$.
 \begin{enumerate}
 \item  A subset $M\subseteq \MM_P$ is an elementary substructure iff $M$ is a dense subset of $\F_P$.
 \item Every elementary substructure $M\preceq \MM_P$ is a model of $\Phi_P$.
 \item  Suppose $A \subseteq \MM_P$ and $A \subseteq M \models \Phi_P$. Then there is an isomorphic embedding $f: M \to \MM_P$ which is the identity on $A$. In particular, models of $\Phi_P$ have size at most $\beth_1$.
 \item If $M\preceq \MM_P$, then every automorphism $\sigma\in\Aut(M)$ extends uniquely to an automorphism of $\MM_P$. Thus $\Aut(M)$ is isomorphic to a subgroup of $\Aut(\MM_P)$.
 \end{enumerate}
 \end{Fact}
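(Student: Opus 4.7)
The plan is to prove (1)--(4) in order, leaning on the quantifier elimination for $T_P$ recorded just before the Fact.

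For (1), the key observation is that the realization set in $\F_P$ of any quantifier-free $\phi(\bar a,y)$ with parameters from $M$ is a clopen subset of $\F_P$: $\phi$ mentions only finitely many $E_q$, and each $E_q$-class is determined by the finitely many coordinates in $P_{\le q}$. Combined with QE and the Tarski--Vaught test, this gives ``dense $\Rightarrow$ elementary'' immediately. The reverse direction is the main obstacle: given a basic open $U$ specified by a finite downward closed $Q\subseteq P$ and values $v\in\prod_{q\in Q}\delta(q)$, I would argue by induction on $|Q|$. Picking a maximal $q^*\in Q$ and setting $Q'=Q\setminus\{q^*\}$, by induction choose $m_0\in M$ with $m_0\restriction Q'=v\restriction Q'$. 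Then iterate Tarski--Vaught on formulas of the form $\bigwedge_{q\in Q'}E_q(y,m_0)\wedge\bigwedge_{i\le k}\neg E_{q^*}(y,m_i)$ to produce elements $m_1,m_2,\ldots\in M$ all agreeing with $v$ on $Q'$ but with pairwise distinct $q^*$-values; since $\delta(q^*)<\omega$, after at most $\delta(q^*)-1$ steps some $m_i$ is forced to have $q^*$-value $v_{q^*}$, yielding a member of $M\cap U$. Each such formula is consistent in $\MM_P$ precisely because fewer than $\delta(q^*)$ values at $q^*$ have been excluded. Statement (2) then follows because $\Psi$ is a universal $L_{\omega_1,\omega}$-sentence: it holds in $\MM_P$ by construction, and any $\LL_P$-substructure of $\MM_P$ inherits it; meanwhile $M\models T_P$ because $T_P$ is first order.

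For (3), the embedding $f:M\to\MM_P$ is built by recursion on $\hgt(q)$. At stage $q$, the second axiom of $T_P$ guarantees that each $E_{<q}^M$-class of $M$ splits into exactly $\delta(q)$-many $E_q^M$-classes; inside each such $E_{<q}^M$-class I pick a bijection between its $\delta(q)$ subclasses and $\delta(q)$, and let $f(m)(q)$ be the label of $[m]_{E_q^M}$ within $[m]_{E_{<q}^M}$. Since $A$ is implicitly assumed to have the same $\LL_P$-structure in $M$ and in $\MM_P$, these bijections can be chosen to extend the identity on $A$. Injectivity uses $\Psi$: $f(m_1)=f(m_2)$ implies $E_q^M(m_1,m_2)$ for every $q$, so $m_1=m_2$. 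The cardinality bound $|M|\le\beth_1$ follows because $f$ is an injection into $\F_P$ and $|\F_P|\le 2^{\aleph_0}$.

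For (4), by (1) $M$ is dense in $\F_P$, so the natural map $M/E_q^M\to\MM_P/E_q^{\MM_P}$ is a bijection for each $q$. Given $\sigma\in\Aut(M)$, the induced permutations of $M/E_q^M$ transfer across these bijections to permutations of $\MM_P/E_q^{\MM_P}$, which are compatible with refinement for $q'\le q$ and hence assemble into a $\sigma^*\in\Aut(\MM_P)$ extending $\sigma$. Uniqueness of the extension is forced by density: for any $f\in\MM_P$ and any $q$, there is $m\in M$ with $E_q^{\MM_P}(f,m)$, pinning $\sigma^*(f)\restriction P_{\le q}$ to $\sigma(m)\restriction P_{\le q}$. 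Uniqueness then makes $\sigma\mapsto\sigma^*$ an injective group homomorphism $\Aut(M)\to\Aut(\MM_P)$.
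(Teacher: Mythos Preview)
Your arguments are correct, but for three of the four parts you take a noticeably different route than the paper. For (1), the paper simply invokes model completeness (from QE): $M\preceq\MM_P$ iff $M\models T_P$, and then observes directly that $M\models T_P$ iff $M$ meets every $\bigwedge_{q\in Q}E_q$-class for all finite downward closed $Q$, i.e., iff $M$ is dense. Your Tarski--Vaught exhaustion argument for ``elementary $\Rightarrow$ dense'' works but is more laborious than needed. For (3), the paper does a one-point-at-a-time transfinite induction: given $b\in M\setminus A$, it explicitly builds an $f\in\F_P$ with $\qftp(A,f)=\qftp(A,b)$ by specifying $f$ on $Q:=\{p:E_p(b,a)$ for some $a\in A\}$ and on the minimal elements of $P\setminus Q$. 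Your global labelling construction is correct and arguably cleaner, though you should note that verifying $f$ is an embedding (not just $E_q$-preserving but also $\neg E_q$-preserving) requires the inductive step on $\hgt(q)$: if $f(m_1)\restriction P_{\le q}=f(m_2)\restriction P_{\le q}$, then inductively $m_1,m_2$ lie in the same $E_{<q}^M$-class, so equal labels at $q$ force $E_q^M(m_1,m_2)$. For (4), the paper uses (3) to extend $\sigma$ to some embedding $\tau:\MM_P\to\MM_P$ and then argues $\tau$ is onto via maximality of $\MM_P$ among models of $\Phi_P$; your direct construction via the compatible system of permutations on $\MM_P/E_q$ is more explicit and avoids that detour. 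Both for (3) and (4), your approach buys concreteness; the paper's buys modularity (later arguments reuse the one-point extension lemma).
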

 \begin{proof}
     (1) If $M$ is not dense in $\F_P$ then there is some finite downward closed $Q \subseteq P$ such that $M$ does not meet every $\bigwedge_{q \in Q} E_q$-class. But $T_P$ entails that there are $\prod_{q \in Q} \delta(q)$ such classes, so $M \not \models T_P$. The converse is similar, noting that $M \preceq \MM_P$ if and only if $M \models T_P$, by the quantifier elimination.

     (2) Clear.

     (3) Suppose $A, M$ are given. It suffices to show that for all $b \in M$ there is $f \in \MM_P$ such that $\qftp(A, b) = \qftp(A, f)$, since then we can replace $A$ by $A \cup \{f\}$, and replace $M$ by a copy over $Af$, and continue by transfinite induction. We can also suppose of course $b \not \in A$.

     Let $Q:=\{p\in P: M\models E_p(b,a)$ for some $a\in A\}$.
     Then $Q$ is downward closed.  Choose $s\in\prod_{q\in Q} \delta(q)$ such that $s(q)=a(q)$ for some/every $a\in A$ such that $M\models E_q(b,a)$.  
     Let $R$ be the set of  minimal elements of $P \backslash Q$. For each $r \in R$, $P_{<r} \subseteq Q$. Let $A'_r = \{a \in A: M\models E_q(a, b) \mbox{ for all } q < r\}$ and let $A_r \subseteq A'_r$ be a choice of representatives for $A'_r/E_r$.
     Since $r\not\in Q$, $M\models\neg E_r(a,b)$ for every $a\in A_r$.  Since $M\models T_P$, this implies $\delta(r)\ge |A_r|+1$.  So,  choose $t\in \prod_{p\in Q\cup R} \delta(p)$
     to extend $s$, but with $t(r)\neq a(r)$ for every $r\in R$ and $a\in A_r$.   Then any $f\in\F_P$ extending $t$ works; we use that $M \models \Phi_P$ to verify that $f \not \in A$.
    
     %YYYY Added final clase ``we use that A \models \Phi_P to verify that f not in A"
%YYYY Problem: a_q used (in definition of A'_r) but not defined. Fixed by changing a_q to b.

% Let $Q$ be the downward closed set of all $q \in P$ such that there is $a \in A$ with $E_q(a, b)$. For each $q \in Q$ let $a_q \in A$ witness this. Then let $s_q = a_q \mr{P_{\leq q}}$, recalling the definition of $\mathcal{F}_P = \prod_{p \in P} \delta(p)$. Then $s_q \subseteq s_{q'}$ for $q \leq q'$, so let $s = \bigcup_q s_q$. 

%      Let $R$ be the set of all minimal elements of $P \backslash Q$. For each $r \in R$, note by the definition of $r$ that $P_{<r} \subseteq Q$. Let $A'_r = \{a \in A: E_q(a, a_q) \mbox{ for all } q < r\}$ and let $A_r \subseteq A'_r$ be a choice of representatives for $A'_r/E_r$.

%      We claim that the quantifier-free type of $b$ over $A$ is isolated by $\{E_q(b, a_q): q \in Q\} \cup \{\lnot E_r(b, a):a \in A_r\}$. It clearly settles all $E_p(b, a)$ for $p \in P$ and $a \in A$. Suppose it did not settle some $b = a$. Then we must have $E_q(a, a_q)$ for each $q \in Q$, and $a$ must not be in any $A'_r$. This implies that $R = \emptyset$ and so $Q = P$. But then we have $E_p(a, b)$ for all $p \in P$, which by definition of $\Phi_P$ implies $a = b$, contrary to assumption.

%      Thus it suffices to find some $f \in \F_P$ extending $s$ such that $f(r) \not= a(r)$ for all $a \in A_r, r \in R$. This amounts to saying $|A_r| < \delta(r)$, which follows from $|A_r \cup \{b\}| \leq \delta(r)$, using the definition of $T_P$.

     (4) It is straightforward to check that every automorphism of $\MM_P$ is continuous, from which uniqueness follows. Further, by (3), if $\sigma: M \to M$ then we can find some extension $\tau: \MM_P \to \MM_P$, and it suffices to show that $\tau$ is surjective. For this it suffices to note that $\MM_P$ is a maximal model of $\Phi_P$, again by (3) (take $A = \MM_P$).
 \end{proof}
 % As suggested by Fact~\ref{check}(4), there is a common topological group $G(P,\le,\delta)$.  We define it to be the group of all elementary permutations of
 % $\acl^{eq}(\emptyset)$ in $\MM(P,\le,\delta)$.  
 % % As $G$ is profinite, it is also topologically isomorphic to 
 % % $\acl^{eq}(\emptyset)$ in $N$ for any $N\preceq M^*$.  
 % It follows from Fact~\ref{check}(4) that $G_P$ is 
 %  topologically isomorphic to  $\Aut(\MM_P)$.
%YYYY Deleted the above since it doesn't really follow from Fact~\ref{check}(4), and anyway all we really need is Aut(\MM_P), which I prefer to write out

 We will see that as we vary $(P,\le,\delta)$ within $\PP$, we obtain radically different classes in regard to Borel complexity of the mutually algebraic theories $T(P,\le,\delta)$.  For this, it is useful to consider the class  $\CC(\Phi_P)$ of colored models $(M,\cc)$ for $M\models\Phi_P$ described in Definition~\ref{coloring}.
 The following Lemma is almost immediate.

 \begin{Lemma} \label{shift1}  For any $(P,\le,\delta)\in \PP$, the first order theory $T_P$
 is Borel equivalent to $\CC(\Phi_P)$.
 \end{Lemma}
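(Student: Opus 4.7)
The plan is to exhibit explicit Borel reductions in both directions between $\Mod(T_P)$ and $\CC(\Phi_P)$, identifying models with structures on universe $\omega$ as usual. The strategy is that a coloring on a model of $\Phi_P$ can be encoded into $T_P$ by inflating the multiplicities of $\bigwedge_q E_q$-classes (since $T_P$, unlike $\Phi_P$, allows such classes to be non-singletons), and conversely any model of $T_P$ can be reduced to a model of $\Phi_P$ by quotienting out by $\bigwedge_q E_q$ and recording class sizes as a coloring.

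For $\CC(\Phi_P) \le_B T_P$: given $(M,\cc)$ with $M \models \Phi_P$, define $N$ with universe $\{(a,i) : a \in M,\ 0 \le i \le \cc(a)\}$ and $E_q^N((a,i),(b,j))$ iff $M \models E_q(a,b)$. Checking $N \models T_P$ is mechanical: equivalence and refinement are inherited from $M$; each $E_{<q}^N$-class sits above an $E_{<q}^M$-class $C$ as the union of the fibers over $C$, and is partitioned into $\delta(q)$ pieces by the $E_q^M$-partition of $C$; the finite amalgamation axiom lifts from $M \models T_P$ by picking $a^* \in M$ and returning $(a^*,0)$. The key point is that the $\bigwedge_q E_q$-class of $(a,i)$ in $N$ has size exactly $\cc(a)+1$, so the coloring is recoverable from $N$ up to isomorphism.

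For $T_P \le_B \CC(\Phi_P)$: given $M' \models T_P$, let $\sim$ denote $\bigwedge_{q \in P} E_q$ on $M'$ (a countable conjunction of Borel relations, hence Borel). Take $M = M'/\sim$ with the induced $\LL_P$-structure, which satisfies $\Phi_P$ by construction, and define $\cc : M \to \omega$ by $\cc([a]) = |[a]_\sim|$ when this is finite and $\cc([a]) = 0$ when infinite, so that the class size is faithfully recorded as an element of $\omega$.

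Verification that each map is an isomorphism-reduction is routine: in the first direction, an isomorphism $h : (M_1,\cc_1) \to (M_2,\cc_2)$ lifts via $(a,i) \mapsto (h(a),i)$, and conversely any $\LL_P$-isomorphism $N_1 \to N_2$ permutes $\bigwedge_q E_q$-classes of equal size, hence descends to a coloring-preserving isomorphism. In the second direction, a coloring-preserving isomorphism lifts by choosing, for each $\sim$-class $C$ of $M_1'$, any bijection with the correspondingly sized class $g[C]$ of $M_2'$, and conversely any $\LL_P$-isomorphism of models of $T_P$ respects $\sim$ and descends after quotienting. I expect the only mild obstacle to be checking Borelness of both maps; this reduces to the standard Borelness of the $\sim$-quotient construction, for instance by representing each class by its $\omega$-least element and relabeling via a Borel bijection with $\omega$.
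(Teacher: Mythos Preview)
Your proposal is correct and follows essentially the same approach as the paper: quotient by $E_P = \bigwedge_{q\in P} E_q$ and record class sizes as colors in one direction, and blow up each point into a fiber of the prescribed size in the other. The only cosmetic difference is the encoding convention---the paper uses $h(X)=|X|$ for finite $X$ and $h(X)=0$ for infinite $X$, so that color $0$ corresponds to an infinite fiber, whereas you use fibers of size $\cc(a)+1$ and never produce infinite fibers in the forward direction; both conventions work.
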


 \begin{proof}  Given a nonempty set $X$, let $h(X)$ denote its cardinality if $X$ is finite, and otherwise $h(X) := 0$.
 Fix any $(P,\le,\delta)\in\PP$ and consider the type-definable  equivalence relation 
 $E_P(x,y):=\bigwedge_{p\in P} E_p(x,y)$.   Given any $M\models T_P$,  the quotient $M/E_P\models \Phi_P$.
 For any countable $M\models T_P$, define a coloring $\cc:M/E_P\rightarrow\omega$, where $\cc(a)=h([a]_{E_P})$
 for each $a\in M$.
 The map $M\mapsto (M/E_P,\cc)$ is clearly a Borel reduction from the class of countable models of $T_P$ to $\CC(\Phi_P)$.
 For the reverse direction, given a countable $(M,\cc)\in\CC(\Phi_P)$, we construct $M_\cc\models T_P$
 as follows:  Choose a family $\{F(a):a\in M\}$ of pairwise disjoint sets, with $h(F(a))=\cc(a)$ for each $a\in M$.
 Let $M_\cc$ be the $\LL_P$-structure with universe $\bigcup\{F(a):a\in M\}$ and, for each $p\in P$, $E_p$ is interpreted as
 $E_p(M_\cc)=\bigcup\{F(a)\times F(b): a,b\in E_p(M)\}$.
 \end{proof}

 As a preamble to what follows, a general technique for showing that $T(P,\le,\delta)$ is Borel complete will be to show that $\Phi(P,\le,\delta)$ admits cross-cutting absolutely indiscernible sets and then applying Lemma~\ref{shift1} and Theorem~\ref{binary}.  Alternatively, a way of showing non-Borel completeness (and, in fact prove that `countable sets of countable sets of reals' do not embed) of $T(P,\le,\delta)$ is to show that $\Phi(P,\le,\delta)$ forbids nested sequences and apply Theorem~\ref{topTrivialTheorem} to $\Phi(P,\le,\delta)$. 

 In the remainder of this section we discuss several examples. %Some definitions will make the discussion smoother.
 We recall the taxonomy mentioned in the Introduction.
 %LLLL -- In addition to adding the Definition, I slightly tweaked the discussion of the examples.  Please check.  

 \begin{Definition} \label{taxonomy} {\em Suppose $(P,\le,\delta)\in \PP$.
 \begin{itemize}  
 \item  
 $(P,\le,\delta)$ is {\em bounded} if the poset $(P,\le)$ is of bounded height and $\delta$ is bounded on $P$.  

 \item  We call a subset $Q\subseteq P$ {\em bounded} if
 $(Q,\le,\delta\mr{Q})\in\PP$ is bounded.
 
 \item  $(P,\le,\delta)$
 is {\em unbounded} if it is not bounded.

 \item  $(P,\le,\delta)$ is {\em minimally unbounded} if
 it is unbounded, but for all downward closed $Q \subseteq P$, either $Q$ or $P \backslash Q$ is bounded.

 \end{itemize}
% Suppose $M$ is a structure and $E_0, E_1$ are invariant equivalence relations. Then say that $E_0$ and $E_1$ {\em support crosscutting absolutely indiscernible sets} if there are cross-cutting absolutely indiscernible sets $\D^0_*, \D^1_*$ for $E_0, E_1$.

We will also need the following. Suppose $(P, \leq, \delta) \in \PP$. Then for any $Q \subseteq P$ downward closed, let $E_Q$ be the type-definable equivalence relation $\bigwedge_{q \in Q} E_q$; this definition makes sense in any $M \models T_P$.

}
 \end{Definition}

 %LLLL--  Incorporated things from the Introduction to the examples below.  

 First, suppose $P$ is an infinite antichain, i.e., $\hgt(P)=1$. Then the theory $T_P$ is simply $CC(\delta)$, as discussed in the Introduction.
 Combining results from \cite{reducts} and \cite{expansions}, $T_P$ is Borel complete if and only if $\delta$ is unbounded. When $\delta$ is bounded, $\Phi_P$ forbids nested sequences; when $\delta$ is unbounded, then we can split $P$ into two pieces $Q \cup R$ with $\delta$ unbounded on both, and then $E_Q$ and $E_R$ witness the existence of crosscutting absolutely indiscernible sets.
 With Corollary~\ref{introCor} we see this behavior is typical of any example with $(P,\le)$ of bounded height.  
 
 % In the remaining examples, $\delta$ is irrelevant.
 %YYYY Actually $\delta$ does matter in the final example.
 
 Let $P$ be a single $\omega$-chain and let $Q =\{p, q\}$ be an antichain of size two. Then $(P\times Q,\le,\delta)$ is unbounded, but not minimally unbounded for any choice of $\delta$.
 Theorem~\ref{bot} and Lemma~\ref{shift1} show that $T_{P \times Q}$ is Borel complete (regardless of the choice of $\delta$). Indeed, $E_{P\times\{p\}}$ and $E_{P \times \{q\}}$ 
 witness that $\Phi_{P\times Q}$ admits cross-cutting absolutely indiscernible sets.

 Let $P$ be the union of $n$-chains $C_n$ for $n < \omega$ with no relations among the $n$-chains. Let $\delta$ be arbitrary. Then again, $(P,\le,\delta)$ is unbounded, but not minimally unbounded.   Theorem~\ref{bot} shows that $T_{P}$ is Borel complete (for any $\delta$). Indeed, let $Q = \{C_{2n}: n < \omega\}$ and $R = \{C_{2n+1}: n < \omega\}$; then $E_Q$ and $E_R$ witness the existence  of  cross-cutting absolutely indiscernible sets.

 Let $P$ be a single $\omega$-chain and let $Q$ be a chain of length $2$. By Corollary~\ref{FirstHalf}, $T_{P \times Q}$ is Borel complete (for any $\delta$). By themselves, equivalence relations of the form $E_R$ for $R \subseteq P$ downward closed do not witness the existence of cross-cutting absolutely indiscernible sets.

 Let $P$ be a single $\omega$-chain. Then $T_P$ is simply $REF(\delta)$ from the Introduction, and we know that $T_P$ is not Borel complete. Indeed, by Theorem~\ref{app}, 
 no reduct of any model of $REF(\delta)$ is Borel complete either.  Regardless of $\delta$, $(\omega,\le,\delta)$ is minimally unbounded.  

 Let $P = \{p_n: n < \omega\} \cup \{q_{n, m}: n, m < \omega\}$ where $\{p_n: n < \omega\}$ is an $\omega$-chain and $q_{n, m}: m < \omega$ is an antichain above $p_n$. Let $\delta$ be identically three. 
 Then $(P,\le,\delta)$ is minimally unbounded.  By Theorem~\ref{SecondHalf}, under sufficient large cardinals $T_P$ is not Borel complete. We conjecture that its potential cardinality is $\beth_2$ (and that this can be proven in $ZFC$) but at present we cannot even prove it is less than $\infty$.

\section{Characterizing when $S_\infty$ divides}

Fix $(P,\le,\delta)\in\PP$.  In this section we characterize when $S_\infty$ divides the automorphism group of some countable $M \models \Phi_P$.

 We want to compare elements $f,f'\in \MM_P$.  Let 
$$\bigwedge (f,f'):=\{q\in P:f(q')=f'(q')\ \hbox{for all $q'\le q$}\}$$
and, for $A\subseteq \MM_P$, put $\bigwedge A:=\bigcup\{\bigwedge(a,a'):a\neq a'\in A\}$.

These definitions match well with the interpretations of the $E_p$'s, and the following Facts are proved merely by unpacking the definitions.

\begin{Fact} \label{wedgefacts} Suppose $(P,\le,\delta)\in\PP.$  Then
\begin{enumerate}
\item For all $f,f'\in \MM_P$, $\bigwedge(f,f')=\{q\in P:\MM_P\models E_q(f,f')\}$;
\item  If $h:A\rightarrow B$ is a bijection, then $h$ is an $\LL_P$-isomorphism if and only
if $\bigwedge(a,a')=\bigwedge(h(a),h(a'))$ for all distinct $a\neq a'$ from $A$.
\item  As a special case, if $h:A\cup\{f\}\rightarrow A\cup\{f'\}$ satisfies $h(f)=f'$
and $h$ fixes $A$ pointwise, then $h$ is an isomorphism iff $\bigwedge (f,a)=\bigwedge(f',a)$ for every $a\in A$.
\item  If $A\cong B$ then $\bigwedge A=\bigwedge B$.
\end{enumerate}
\end{Fact}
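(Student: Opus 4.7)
The plan is to handle the four clauses in order, with (2)--(4) following quickly from (1) together with the fact that $\LL_P$ is a purely relational language whose only symbols are the binary relations $\{E_q : q \in P\}$. For (1), I would simply unpack the definition of $E_q$ in $\MM_P$: by construction, $\MM_P \models E_q(f,f')$ iff $f(q')=f'(q')$ for every $q' \le q$, which is exactly the condition that $q \in \bigwedge(f,f')$. So (1) is nothing more than a restatement of the definition of $\MM_P$.

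For (2), since the language is purely relational, a bijection $h:A \to B$ is an $\LL_P$-isomorphism iff it preserves each $E_q$, i.e.\ iff for all $q$ and all $a,a' \in A$, $\MM_P \models E_q(a,a')$ iff $\MM_P \models E_q(h(a),h(a'))$. Diagonal pairs $a=a'$ are automatic because each $E_q$ is reflexive, so only distinct pairs matter; there part (1) rewrites each side as ``$q \in \bigwedge(a,a')$'' respectively ``$q \in \bigwedge(h(a),h(a'))$''. Quantifying over $q \in P$ then converts the preservation condition into the claimed equality of $\bigwedge$-values for every distinct pair from $A$.

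Clause (3) is the special case of (2) with $B = A \cup \{f'\}$ and $h$ fixing $A$ pointwise: for any distinct $a, a' \in A$ the equation $\bigwedge(a,a') = \bigwedge(h(a),h(a'))$ holds trivially, so the only nontrivial content is the requirement $\bigwedge(f,a) = \bigwedge(f',a)$ for every $a \in A$. For (4), if $h:A \to B$ is any $\LL_P$-isomorphism, then the induced bijection on off-diagonal pairs $(a,a') \mapsto (h(a),h(a'))$ preserves $\bigwedge$-values by (2); taking unions over all such pairs yields $\bigwedge A = \bigwedge B$.

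There is no genuine obstacle here; the entire fact is a direct consequence of the observation that in a relational language any isomorphism between substructures of $\MM_P$ is completely determined by preservation of the relations $E_q$, and by (1) those are in turn coded by the sets $\bigwedge(a,a')$. The only point that merits explicit mention is the reflexivity of each $E_q$, which is what allows one to restrict attention to off-diagonal pairs throughout.
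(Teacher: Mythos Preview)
Your proposal is correct and matches the paper's own treatment: the paper simply remarks that these facts ``are proved merely by unpacking the definitions'' and gives no further argument. Your explicit unpacking of (1) from the definition of $E_q$ in $\MM_P$, the reduction of (2) to (1) via the purely relational nature of $\LL_P$ and reflexivity of each $E_q$, and the derivations of (3) and (4) as specializations of (2), are exactly what is intended.
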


%Typically, $\bigwedge(f,f')$ is infinite, but what matters is its height. 
% We say $f$ and $f'$ are {\em almost disjoint}
%if $\hgt_P(\bigwedge(f,f'))<\omega$.

\begin{Definition} \label{densesuitable} {\em  Suppose $(P,\le,\delta)\in\PP$.  Let $\Age(\MM_P)$ denote the set of finite substructures of $\MM_P$. Call a set  $\K\subseteq\Age(\MM_P)$  {\em dense-suitable} if:
\begin{enumerate}
\item  $\K$ is closed under substructures and isomorphisms within $\Age(\MM_P)$, i.e., if $A\in\K$ and $B\subseteq \MM_P$
is isomorphic to $A$, then $B\in\K$;
\item{\bf Extendible}  The empty structure is in $\K$ and if $A\in\K$ then there is $B\in\K$ with $B\supsetneq A$;
\item{\bf Disjoint Amalgamation}  If $A,B,C\in\K$ and $A\subseteq B$, $A\subseteq C$, then there is $B'\in \K$ such that
$B'\cong_A B$ and $B' \cap C = A$ and $B'\cup C \in \K$;
%YYYY Fixed the definition of Disjoint Amalgamation to require that B' \cap C = A. Added clauses later to handle it
\item{\bf Density}  For all $A \in \mathcal{K}$, $\{f \in \mathcal{F}: A \cup \{f\} \in \mathcal{K}\}$ is dense in 
$\mathcal{F}$.
\end{enumerate}
}
\end{Definition}

% Suppose $(P,\le,\delta) \in \mathbb{P}$. Say that $\mathcal{K}$ is {\em dense-suitable} if $\mathcal{K}$ is a nonempty class of finite substructures of $M^*$ satisfying the following:

% \begin{itemize}
%     \item $\mathcal{K}$ is closed under subsets and under isomorphism;
%     \item For all $A \subseteq B, C \in \mathcal{K}$, there is $B' \cong_A B$ such that $B' \cap C = A$ and $B' \cup C \in \mathcal{K}$;
%     \item For all $A \in \mathcal{K}$, $\{f \in \mathcal{F}: A \cup \{f\} \in \mathcal{K}\}$ is dense in $\mathcal{F}$.
% \end{itemize}

\begin{Lemma}  \label{quote}
    If there exists some $\mathcal{K}\subseteq Age(\MM_P)$ which is dense-suitable, then there is a family
    $\{D_n:n\in\omega\}$ of countable,  absolutely indiscernible  subsets of $\MM_P$,
    each of which is dense in $\F_P$.
\end{Lemma}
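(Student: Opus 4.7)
The plan is to carry out a Fra\"iss\'e-style back-and-forth construction inside $\MM_P$, producing a countable elementary substructure $M \preceq \MM_P$ together with a partition $M = \bigsqcup_{n \in \omega} D_n$ into countably many dense pieces such that the resulting colored structure $(M, D_n)_{n \in \omega}$ admits every relabeling of the $D_n$ as an automorphism.

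First, I would enlarge $\K$ to the class $\K^*$ of pairs $(A, c_A)$ with $A \in \K$ and $c_A : A \to \omega$ an arbitrary coloring, where morphisms are color-preserving $\LL_P$-isomorphisms. Since any coloring may be imposed on or inherited by any structure in $\K$, the four properties of Definition~\ref{densesuitable} transfer from $\K$ to $\K^*$ essentially for free; in particular, colors are always preserved in a disjoint amalgam.

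Next, I would build $M \subseteq \MM_P$ and a coloring $c : M \to \omega$ by a back-and-forth of length $\omega$, interleaving three kinds of requirements via a diagonal enumeration: (a) Fra\"iss\'e-style amalgamation---for each finite $(A_0, c_0) \subseteq (A_1, c_1)$ in $\K^*$ and each color-preserving embedding of $(A_0, c_0)$ into the currently constructed fragment, extend it to an embedding of $(A_1, c_1)$; (b) global density---make $M$ meet every basic open subset of $\F_P$; and (c) per-color density---for every $n \in \omega$, make $D_n := c^{-1}(n)$ meet every basic open subset of $\F_P$. Requirement (a) is handled by applying Disjoint Amalgamation~(3) to the current finite fragment $C \subseteq M$ and $(A_0, c_0) \subseteq (A_1, c_1)$, producing a copy of $A_1 \setminus A_0$ inside $\MM_P$ disjoint from $C$ over $A_0$, to which we transfer the coloring $c_1$. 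Requirements (b) and (c) are handled by Density~(4): given $C \in \K$ and any basic open $U \subseteq \F_P$, we pick $f \in U$ with $C \cup \{f\} \in \K$ and assign $f$ whatever color is needed. Global density of $M$ in $\F_P$ gives $M \preceq \MM_P$ by Fact~\ref{check}(1), and each $D_n$ is countable and dense by construction.

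Finally, fix any $\sigma \in \Sym(\omega)$. The colored structure $(M, \sigma \circ c)$ satisfies exactly the same extension properties as $(M, c)$, because relabeling colors by $\sigma$ merely permutes the instances of requirement~(a) and preserves (b) and (c). A standard back-and-forth using these extension properties produces an $\LL_P$-automorphism $\tau \in \Aut(M)$ with $c(\tau(x)) = \sigma(c(x))$ for all $x$, i.e., $\tau[D_n] = D_{\sigma(n)}$ for every $n$. By Fact~\ref{check}(4), $\tau$ extends uniquely to an automorphism $\sigma^*$ of $\MM_P$ with the same property, witnessing that $\{D_n : n \in \omega\}$ is absolutely indiscernible. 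The main obstacle is the bookkeeping of the back-and-forth construction: one must simultaneously meet the $\omega$-many partition-density requirements, the Fra\"iss\'e extension requirements, and global density, but the Density clause~(4) of dense-suitability is precisely what makes (b) and (c) always compatible with the current finite $\K$-structure.
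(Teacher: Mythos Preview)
Your proposal is correct and takes essentially the same approach as the paper: build a $\K$-limit $M$ via a Fra\"iss\'e-style construction, carrying along a partition into color classes that can be freely permuted, and dovetail in density requirements for each class using Clause~(4). The paper's proof is terser because it cites Theorem~A.2 of \cite{modules} for the core construction (Clauses~(1)--(3) alone already yield a $\K$-limit with an equivalence relation $E$ whose classes are permutable by automorphisms) and then remarks that Clause~(4) lets one interleave the extra density requirements; you have simply unpacked that citation.

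Two minor remarks. First, requirement~(b) is redundant: once each $D_n$ is dense in $\F_P$, their union $M$ is automatically dense, so Fact~\ref{check}(1) applies without a separate global-density pass. Second, the appeal to Fact~\ref{check}(4) at the end is unnecessary: absolute indiscernibility (Definition~\ref{abs}) only asks for $\sigma^*\in\Aut(M)$, and you already have $\tau\in\Aut(M)$; extending to $\MM_P$ is not needed.
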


\begin{proof}
    Note that if $\K\subseteq \Age(\MM_P)$ satisfies Clauses (1)--(3), then if we trivially
    expand each $A\in\K$ by a unary predicate $X$ interpreted as $A^X=A$, then
    $\K$ is {\em suitable} in the sense of Definition~A.1 of \cite{modules}.  Thus,
    by Theorem~A.2 there, there is a $\K$-limit $M$, i.e., a nested union of elements of $\K$, and an equivalence relation $E$ on $M$ with infinitely many classes, each class infinite, such that every $\pi\in\Sym(M/E)$ lifts to an automorphism of $M$.
    Here, we replicate this proof, but using Clause~(4), we dovetail additional requirements that guarantee that each $E$-class is dense in $\F_P$.
    Given such an $M$ and $E$, simply let $\{D_n:n\in\omega\}$ be the $E$-classes of $M$.
    \end{proof}

%     Imitate the proof of appendix of modules paper (note that $(\mathcal{K}, X)$ is suitable where $X$ is interpreted as everything). The extra step is that we need to additionally require each $D_n$ is dense in $\mathcal{F}(P, \delta)$; use the third condition of dense-suitable to arrange this. I don't think we need to spell out the details, just reference the other paper and say that we are dovetailing with the construction there.
% \end{proof}

The following Proposition justifies our interest in unbounded
triples $(P,\le,\delta)\in\PP$.   
With Theorem~\ref{dichotomy} we will
see that the converse of Proposition~\ref{checkdensesuitable} holds as well.

% \begin{Definition}  {\em  Fix $(P,\le,\delta)\in\PP$. Say that $(P, \leq, \delta)$ is {\em bounded} if $(P, \leq)$ is of  bounded height and $\delta$ is bounded on $P$. When $\leq, \delta$ are clear from context we omit them. 
% }
% \end{Definition}

\begin{Proposition}  \label{checkdensesuitable}
Suppose $(P,\le,\delta)\in\PP$ is unbounded, i.e., either $\delta$ is unbounded on
$P$ or $(P,\le)$ has unbounded height.  Then 
 a dense-suitable $\mathcal{K}\subseteq \Age(\MM_P)$ exists, hence there is a family
    $\{D_n:n\in\omega\}$ of countable,  absolutely indiscernible  subsets of $\MM_P$,
    each of which is dense in $\F_P$.
\end{Proposition}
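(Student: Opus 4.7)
The plan is to construct a dense-suitable $\K\subseteq\Age(\MM_P)$ directly; the conclusion about absolutely indiscernible dense subsets then follows at once from Lemma~\ref{quote}. I split into cases based on the form of unboundedness. In Case~1 ($\delta$ is unbounded on $P$), fix distinct $p_0,p_1,\dots\in P$ with $\delta(p_n)\ge n+2$. In Case~2 ($(P,\le)$ has unbounded height), if $P$ is narrow then Theorem~\ref{omegaChain} yields an $\omega$-chain $p_0<p_1<\cdots$ in $P$; otherwise orthogonal unbounded-height subsets of $P$ exist and can be used directly, in fact yielding the stronger cross-cutting absolutely indiscernible sets. In either case, let $\K$ consist of all finite $A\in\Age(\MM_P)$ such that for all distinct $a,a'\in A$, the set $\{n:p_n\notin\bigwedge(a,a')\}$ is infinite. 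Since $\bigwedge$-sets are isomorphism-invariant by Fact~\ref{wedgefacts}, this condition is preserved under $\Age(\MM_P)$-isomorphisms, giving clause~(1); extendibility~(2) and density~(4) are routine, as any element disagreeing with each current $a$ on $P_{\le p_n}$ for infinitely many $n$ suffices, and such elements exist by the freedom at large-$\delta$ or high-chain coordinates.

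The crux is disjoint amalgamation~(3). Given $A\subseteq B,C$ in $\K$, enumerate $B\setminus A=(b_1,\dots,b_k)$ and construct $b'_1,\dots,b'_k\in\MM_P$ iteratively so that each $b'_i$ realizes the qftp of $b_i$ over $A\cup\{b_1,\dots,b_{i-1}\}$, lies outside $C\cup\{b'_1,\dots,b'_{i-1}\}$, and maintains the $\K$-condition with each existing element. The qftp over the current base $D$ pins down $\bigwedge(b'_i,d)$ for each $d\in D$, so the $\K$-condition with elements of $D$ is automatic; for each new $c\in C$, the qftp leaves $b'_i$'s values free at coordinates outside $U_D=\bigcup_{d\in D}\bigwedge(b'_i,d)$, and the $\K$-assumption on $B$ together with unboundedness guarantees infinitely many $p_n$ lie outside $U_D$ with sufficient residual choice to ensure $b'_i\ne c$ on $P_{\le p_n}$ for infinitely many $n$ via a standard diagonal partition of the free indices into infinite blocks $I_c$.

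The main obstacle will be the combinatorial bookkeeping in Case~2 with $\delta\equiv 2$: per-coordinate freedom is limited to two values, so enforcing the qftp, multiple $\K$-conditions, and injectivity simultaneously requires careful exploitation of the cascading chain freedom---once $b'_i$ has diverged from each $d\in D$ below some chain level, $b'_i$ becomes free at all higher chain coordinates, ultimately yielding $2^{\aleph_0}$ realizations of the qftp from which the desired $b'_i$ can be selected.
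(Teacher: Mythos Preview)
Your proposed $\K$ does not satisfy disjoint amalgamation; the pairwise condition ``$\{n:p_n\notin\bigwedge(a,a')\}$ is infinite for all distinct $a,a'\in A$'' is too weak. Take Case~1 with $P=\{p_n:n\in\omega\}$ an antichain and $\delta(p_n)=n+2$. Let $d_1\equiv 0$, $d_2\equiv 1$, and set $b(p_n)=0$ for $n$ even, $b(p_n)=1$ for $n$ odd. Then $A=\{d_1,d_2\}$ and $B=A\cup\{b\}$ lie in your $\K$, but $b$ is the \emph{unique} realization of $\qftp(b/A)$ in $\MM_P$: the type forces $b'(p_n)=d_1(p_n)$ on evens and $b'(p_n)=d_2(p_n)$ on odds, leaving no freedom. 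Taking $C=B$, disjoint amalgamation demands some $B'\cong_A B$ with $B'\cap C=A$, hence $b'\ne b$, which is impossible. The error in your sketch is the assertion that ``infinitely many $p_n$ lie outside $U_D$'': each $\bigwedge(b,d)$ individually misses infinitely many $p_n$, but a finite union of co-infinite sets can be cofinite, and here $U_D=P$.

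The paper's $\K$ avoids this by imposing a \emph{global} bound on $\bigwedge A=\bigcup_{a\ne a'}\bigwedge(a,a')$ rather than a pairwise one: in Case~1 one requires $\delta$ bounded on $\bigwedge A$, and in Case~2 one requires $\hgt(\bigwedge A)<\omega$. In the counterexample above $\bigwedge B=P$, so $B$ is excluded. The global bound is precisely what makes DAP work: when constructing $f'$ over $C$, one modifies $f$ only at coordinates with $\delta$ large (resp.\ height large), and the hypothesis on $\bigwedge B$ and $\bigwedge C$ guarantees these coordinates lie entirely outside both, so all the required $\bigwedge$-sets are preserved automatically. A secondary issue: your handling of Case~2 non-narrow appeals to cross-cutting absolutely indiscernible sets, but in the paper those are produced by Theorem~\ref{bot} via Theorem~\ref{dichotomy}, which in turn rests on this very Proposition, so that route is circular. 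The paper's Case~2 argument works uniformly for any $P$ of unbounded height, using only the existence of elements of each finite height, and needs neither narrowness nor an $\omega$-chain.
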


% For proving Disjoint Amalgamation, it is useful to note that for $A\cup\{f,f'\}\subseteq\F$,
% $A\cup\{f\}$ is isomorphic to $A\cup\{f'\}$ over $A$ if and only if
% $\bigwedge (f,a)=\bigwedge(f',a)$ for every $a\in A$.

\begin{proof}  
We split into cases, depending on the reason why $(P,\le,\delta)$ is unbounded.

\medskip\noindent{\bf Case 1.}  $\delta$ is unbounded on $P$.

In this case, let $\K$ consist of all finite substructures $A\subseteq \MM_P$ such that
$\delta$ is bounded on $\bigwedge A$.  For clause~(1), $\K$ is trivially closed under substructure, and closure under isomorphism follows from Fact~\ref{wedgefacts}(4).
The verification of (2) and (4) are immediate from the following Claim.

\medskip
\noindent{\bf Claim.}  For all finite $Q\subseteq P$, for all $f\in\F_P$, and for all $A\in\K$
there is $h\in \F_P$ such that $h\mr{Q}=f\mr{Q}$, $A\cup\{h\}\in\K$, and $h\not\in A$.

\begin{proof}  Choose $n\in\omega$ such that $\delta(p)<n$ for all $p\in Q\cup \bigwedge A$
and $n>|A|$.  Then choose $h\in \F_P$ such that
\begin{itemize}
    \item  $h(p)=f(p)$ for all $p\in P$ with $\delta(p)<n$; and
    %YYYY Slightly streamlined, we don't need h(p) \not= f(p), and similarly below
    \item  $h(p)\neq a(p)$ for all $a \in A$, whenever $\delta(p)\ge n$. 
    
\end{itemize}
To see that $h$ works, first note that since $\delta(p)<n$ for all $p\in Q$, $h\mr{Q}=f\mr{Q}$.  Since $\delta$ is unbounded on $P$, there is at least one $p^*$ with
$\delta(p^*)\ge n$.  Since $h(p^*)\neq a(p^*)$ for each $a\in A$ we have $h\not\in A$.
Similarly, since $h(p)\neq a(p)$ for every $p$ with $\delta(p)\ge n$, we have that
$\delta$ is bounded below $n$ on $\bigwedge(h,a)$ for every $a\in A$.  Thus, $A\cup\{h\}\in\K$.
\end{proof}

To see that Disjoint Amalgamation holds,  by induction 
and applications of  Fact~\ref{wedgefacts}(3), it suffices to prove that
for all $A\in\K$, if $B=A\cup\{f\}$ and $C=A\cup\{h\}$ are in $\K$, then there is $f'\in\F_P$
such that $\bigwedge(f,a)=\bigwedge(f',a)$ for every $a\in A$ and $C\cup\{f'\}\in\K$ and $f' \not= h$.
To accomplish this, choose $n$ such that $\delta(q)<n$ for all $q\in \bigwedge B$ and all $q\in \bigwedge C$ and $n>|C|$.  Define $f'\in\F_P$ such that
\begin{itemize}
\item  $f'(p)=f(p)$ whenever $\delta(p)\le n$; and
\item $f'(p)\neq c(p)$ for all $c\in C$ whenever $\delta(p)>n$.
\end{itemize}

To see that this $f'$ works, first we show that $f' \not= h$. Indeed, choose $p \in P$ with $\delta(p) > n$, then $f'(p) \not= h(p)$.

Choose any $a\in A$.  Towards verifying $\bigwedge (f,a)=\bigwedge(f',a)$, choose $p\in \bigwedge (f,a)$.  As $\bigwedge (f,a)$ is downward closed, $\delta(q)<n$ for all $q\le p$.  Thus $f'(q)=f(q)$ for all $q\le p$, so $p\in\bigwedge(f',a)$.  Conversely, choose $p\in\bigwedge(f',a)$. This means $f'(q)=a(q)$ for all $q\le p$.
By choice of $f'$ this implies $\delta(q)\leq n$ for all $q\le p$, so $f'(q)=f(q)$ for all $q\le p$.  Hence, $p\in\bigwedge(f,a)$ as well.

Finally, to see that $C\cup\{f'\}\in \K$, since $C\in\K$ and 
$\bigwedge (f,a)=\bigwedge (f',a)$ for all $a\in A$, it remains to show that 
 $\delta$ is bounded on $\bigwedge (f',h)$, but this is immediate from the definition of $f'$.

 \medskip

 \noindent{\bf Case 2.}  $P$ is of unbounded height.

 Here, take $\K$ to be the set of all finite $A\subseteq \MM_P$ with $\hgt(\bigwedge A)<\omega$.  Again, Clause~(1) holds by Fact~\ref{wedgefacts}(3), with (2) and (4) holding  from the following Claim.

 \medskip
\noindent{\bf Claim.}  For all finite $Q\subseteq P$, for all $f\in\F_P$, and for all $A\in\K$
there is $h\in \F_P$ such that $h\mr{Q}=f\mr{Q}$, $A\cup\{h\}\in\K$, and $h\not\in A$.

\begin{proof}  We can choose $n$ such that $\hgt(q) < n$ for all $q \in Q$ and such that $\hgt(\bigwedge A) < n$.  Fix an enumeration
$A=\{a_i:i<\ell\}$ and let $h\in\F_P$
be arbitrary satisfying 
\begin{itemize}
    \item  $h(p)=f(p)$ whenever $\hgt(p) \leq n$;
    \item  $h(p)\neq a_i(p)$ for every
    $p\in P$ with $\hgt(p)=n+i+1$.
    \end{itemize}
    (Since $(P,\le)$ is of unbounded height,
    elements of height $m$ exist for every $m\ge 1$.)  

To see that any such $h\in\F_P$ works, clearly $h\mr{Q}=f\mr{Q}$ by choice of $n$.
Also, $h\not\in A$ since for every $a_i\in A$, $h(p)\neq a_i(p)$ for any $p$ of height $n+i+1$.
To see that $A\cup\{h\}\in\K$, we need only show that $\hgt(\bigwedge(h,a_i))< n+i$ for every $i$.  So choose $p\in P$ with $\hgt(p)\ge n+i$.  By Fact~\ref{ht}(2), choose
$q\le p$ with $\hgt(q)=n+i$.  By choice of $h$, $h(q)\neq a_i(q)$, so $p\not\in \bigwedge(h,a_i)$.

Finally, we show that Disjoint Amalgamation holds for $\K$.  As before,
choose $B=A\cup\{f\}$ and $C=A\cup\{h\}$,
both in $\K$.  Choose $n\in\omega$ such that
$\hgt(\bigwedge B) < n$ and $\hgt(\bigwedge C)<n$.
Define $f'\in\F_P$ satisfying:
\begin{itemize}
    \item $f'(p)=f(p)$ whenever $\hgt(p) \leq n$; and
    \item  $f'(p)\neq h(p)$ whenever $\hgt(p) > n$.
\end{itemize}
Clearly $f' \not= h$. 

\medskip\noindent{\bf Claim 2.} For all $a\in A$, $\bigwedge (f',a)=\bigwedge(f,a)$

\begin{proof}  Fix $a\in A$ and first choose $p\in \bigwedge(f,a)$.  By choice of $n$, $\hgt(q) < n$ for every $q\le p$,
so $f'(q)=f(q)$ for every $q\le p$.
Thus $p\in\bigwedge(f',a)$ as well.
Conversely, assume $p\in\bigwedge(f',a)$.
If $\hgt(p)<n$, then as above $f'(q)=f(q)$ for all $q\le p$, so $p\in\bigwedge(f,a)$.  However, if $\hgt(p)\ge n$, then by Fact~\ref{ht}(2), choose $q\le p$ with $\hgt(q)=n$.
As $\bigwedge(f',a)$ is downward closed, $q\in\bigwedge(f',a)$.  But, as $f'(q')=f(q')$ for all $q'\le q$, we would have $q\in\bigwedge(f,a)$, contradicting our choice of $n$, proving Claim 2. 
\end{proof}

Thus, by Claim 2 and Fact~\ref{wedgefacts}(3), $A\cup\{f'\}\cong B$ over $A$ and,
in light of Claim 2, to show $C\cup\{f'\}\in\K$ we need only show that $\hgt(\bigwedge(f',h))<\omega$, but this is clear by the definition of $f'$.
\end{proof}

\end{proof}

Next, we glean consequences from the assumption that $\delta$ is  bounded.  

%YYYY Deleted``In later sections" sentence since we never mention these lemmas again.

\begin{Lemma} \label{boundeddelta}  Suppose $(P,\le,\delta)\in\PP$ and 
and $\delta$ is bounded by $m<\omega$.
Then, for every $k< \omega$ we have:
\begin{enumerate}

\item  For all $\sigma\in \Aut(\MM_P)$, for all $q\in P$ with $\hgt(q) \leq k$, and for all $f\in\F_P$ we have $\sigma^{(m!)^k}(f)(q)=f(q)$.
\item  For every nested sequence $(f_n:n\in\omega)$ from $\F_P$ (see Definition~\ref{nested}) and for every $q\in P$ with $\hgt(q) \leq k$,
$\MM_P\models E_q(f_n,f_{n'})$ for all $n,n'\ge mk$.
\end{enumerate}
\end{Lemma}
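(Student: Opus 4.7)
The plan is to prove both parts by induction on $k$, exploiting the decomposition $E_q = E_{<q}\wedge(x(q)=y(q))$: each $E_{<q}$-class splits into exactly $\delta(q)\le m$ sub-$E_q$-classes, so within any fixed $E_{<q}$-class the level-$q$ structure reduces to the $k=1$ base case. I first observe that $\hgt(q)\le k$ implies $\hgt(q')\le k$ for every $q'\le q$, so the coordinate-fixing condition in (1) is equivalent to requiring that $\sigma^{(m!)^k}$ preserve every $E_q$-class setwise for such $q$.

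For (1), the base case $k=1$ is immediate: for minimal $q$, the set $\MM_P$ is partitioned into $\delta(q)\le m$ $E_q$-classes on which $\sigma$ acts as a permutation of order dividing $m!$, so $\sigma^{m!}$ fixes each class. For the inductive step, fix $q$ with $\hgt(q)=k+1$. By the inductive hypothesis, $\sigma^{(m!)^k}$ already fixes every $E_{q'}$-class with $\hgt(q')\le k$, hence fixes each $E_{<q}$-class. Within each such class it then permutes the $\le m$ sub-$E_q$-classes with order dividing $m!$, so $\sigma^{(m!)^{k+1}} = (\sigma^{(m!)^k})^{m!}$ fixes every $E_q$-class; and of course $\sigma^{(m!)^{k+1}}$ still fixes the lower-height classes as a power of $\sigma^{(m!)^k}$.

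For (2), the base case $k=1$ is a direct analysis of the sequence of $E_q$-classes $C_n$ of the nested $(f_n)$. By nestedness at position $n$, the sets $\{i<n:C_n=C_i\}$ and $\{i<n:C_{n+1}=C_i\}$ coincide. Hence if $C_n\in\{C_0,\ldots,C_{n-1}\}$ then $C_{n+1}=C_n$ (``non-freshness'' is absorbing); and if $C_n$ is fresh relative to $\{C_0,\ldots,C_{n-1}\}$, then $C_{n+1}$ is either $C_n$ or a genuinely new class. Consequently fresh arrivals occur at an initial segment of indices, of length at most the total number of $E_q$-classes, namely $\delta(q)\le m$; after this segment the class stabilizes, giving $E_q(f_n,f_{n'})$ for all $n,n'\ge m$. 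For the inductive step, the inductive hypothesis places every $f_n$ with $n\ge mk$ in a common $E_{<q}$-class. The tail $(f_n:n\ge mk)$ remains nested (nestedness is preserved under restriction of parameters), and applying the base case inside this $E_{<q}$-class (with its $\le m$ sub-$E_q$-classes) yields stabilization in at most $m$ further steps, so combined with the inductive hypothesis for $q'<q$ we get $E_q(f_n,f_{n'})$ for $n,n'\ge mk+m=m(k+1)$.

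Neither part presents a substantive difficulty once the decomposition is in hand; the main care point is the fresh-class bookkeeping in (2), where one must use nestedness at each position $n$ to verify both that a previously-abandoned class cannot reappear and that freshness at step $n$ transfers to $C_{n+1}$ only as ``$C_n$ or a strictly new class.'' This is immediate from the observation that the qftp of $f_n$ over $\{f_0,\ldots,f_{n-1}\}$ records precisely the pattern of $E_q$-equivalences with the previous elements.
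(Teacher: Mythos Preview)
Your proof is correct and follows essentially the same approach as the paper: both parts are proved by induction on $k$, reducing to a single level via the inductive hypothesis and then using that each $E_{<q}$-class splits into at most $m$ sub-$E_q$-classes. For (1) the paper phrases the key step as a pigeonhole argument on iterates of $\psi=\sigma^{(m!)^{k-1}}$ rather than your direct ``permutation of $\le m$ objects has order dividing $m!$'', and for (2) the paper simply finds a repeated $q$-value among $f_{km},\ldots,f_{km+m}$ by pigeonhole and invokes nestedness to propagate, where you give the more explicit freshness bookkeeping; these are the same argument at different levels of detail.
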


\begin{proof}   Both of these are proved by induction on $k$; since every element has height at least one, the case $k = 0$ is trivial.  

For (1) assume this holds for all $k'<k$.  Fix any $\sigma\in \Aut(\MM_P)$.   
    Put  $\psi:=\sigma^{(m!)^{(k-1)}}$ (so $\psi=\sigma$ when $k=1$ and $\sigma^{(m!)^k}=\psi^{m!}$).  By our inductive hypothesis, $\psi(f)(r)=f(r)$ for every $f\in\F_P$ and $r\in P$ with $\hgt(r) < k$.
    It follows that $\psi^s(f)(r)=f(r)$ for any $s\ge 1$.  In particular, $\psi^{m!}(f)(r)=f(r)$ for every $f\in\F_P$ and $r\in P$ with $\hgt(r) < k$.
 It remains to prove (1) for $q\in P$ with $\hgt(q)=k$.   Choose any such $q$ and fix $f\in\F_P$.
 By pigeon-hole choose $\ell<\ell'<m$  such that $\psi^\ell(f)(q)=\psi^{\ell'}(f)(q)$. As $r<q$ implies $\hgt(r) < k$ our sentences above  give that
 $\psi^\ell(f)(r)=f(r)=\psi^{\ell'}(f)(r)$  for every $r<q$.  
%  By the inductive hypothesis,
% $\psi(f)(q')=f(q')$ for every $q'<q$.  As the same holds for any iterate of $\psi$,  $\psi^\ell(f)(q')=\psi^{\ell'}(f)(q')$ also holds for every $q'<q$.
 Thus, by our interpretation of $E_q$, we have $\MM_P\models E_q(\psi^\ell(f),\psi^{\ell'}(f))$.  Since $\psi^{-\ell}\in \Aut(\MM_P)$ we obtain that
$\MM_P\models E_q(\psi^t(f),f)$, where $t=\ell'-\ell$.  In particular, $f(q)=\psi^t(f)(q)$.
As $0<t<m$, $t$ divides $m!$, so $f(q)=\psi^{m!}(f)(q)$, as required.

For (2), choose any nested sequence $(f_n:n\in\omega)$ from $\F_P$.  
% We first prove this for $k=1$.  Since $P_{\le q}=\{q\}$ for any $q\in P_1$, we have
% $$\hbox{for all $f,f'\in\F$}\quad M^*\models E_q(f,f') \Longleftrightarrow f(q)=f'(q)$$\
% for every $q\in P_1$.  
% Fix any $q\in P_1$.  By pigeon-hole, choose $\ell<\ell'<m$ such that $f_\ell(q)=f_{\ell'}(q)$. Then $M^*\models E_q(f_\ell,f_{\ell'})$.  Since $(f_n:n\in\omega)$ is nested,
% this implies $M^*\models E_q(f_\ell,f_n)$ for every $n\ge \ell'$.  Since $\ell'\le m$, it follows by transitivity that $M\models E_q(f_n,f_{n'})$ for all $n,n'\ge m$.

%YYYY Streamlined induction to avoid splitting into cases
Suppose we have verified (2) at $k$ and choose any $q\in P$ with $\hgt(q)=k+1$.  Note that every $r<q$ has $\hgt(r) \leq k$, so $\MM_P\models E_r(f_n,f_{n'})$ for all $n,n'\ge km$.
By pigeon-hole, choose $\ell,\ell'$ with $km\le \ell<\ell'<km+m$ and $f_{\ell}(q)=f_{\ell'}(q)$.  Coupled with the sentences above, we have
$f_{\ell}(q')=f_{\ell'}(q')$ for every $q'\le q$, so by our interpretation of $E_q$, we obtain $\MM_P\models E_q(f_{\ell},f_{\ell'})$.  As $(f_n:n\in\omega)$ is nested,
we conclude that $\MM_P\models E_q(f_n,f_{n'})$ for all $n,n'\ge (k+1)m$.
\end{proof}

%YYYYMoved relativization comment to before the theorem we actually use

\begin{Corollary}  \label{boundedconsequence}  Suppose $(P, \leq, \delta)$ is bounded, say $\hgt(P) \leq k$ and $\delta$ is bounded by $m$. Then:
\begin{enumerate}
\item  For every $\sigma\in \Aut(\MM_P)$, $\sigma^{(m!)^k}=id$, hence $\Aut(\MM_P)$ has bounded exponent.
\item  Every nested sequence $(f_n:n\in\omega)$ from $\MM_P$ is eventually constant, hence $\Phi_P$ forbids nested sequences (using Fact ~\ref{check}(3)).
\end{enumerate}
\end{Corollary}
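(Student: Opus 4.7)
Both parts of the corollary should fall out directly from Lemma~\ref{boundeddelta} once the uniform height bound is invoked, with Fact~\ref{check}(3) providing the bridge from $\MM_P$ to arbitrary $M\models\Phi_P$ at the end of~(2).

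For (1), my plan is simply to apply Lemma~\ref{boundeddelta}(1) with the global bound $k$: since $\hgt(P)\le k$, every $q\in P$ satisfies $\hgt(q)\le k$, so for every $\sigma\in\Aut(\MM_P)$, every $f\in\F_P$, and every $q\in P$ we get $\sigma^{(m!)^k}(f)(q)=f(q)$. Elements of $\F_P$ are determined by their values on $P$, so $\sigma^{(m!)^k}(f)=f$ for all $f\in\F_P$, hence $\sigma^{(m!)^k}=\mathrm{id}$ and $\Aut(\MM_P)$ has exponent dividing $(m!)^k$.

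For (2), I would first treat the case $M=\MM_P$. Given a nested sequence $(f_n:n\in\omega)$ from $\MM_P$, Lemma~\ref{boundeddelta}(2) with the same uniform $k$ yields $\MM_P\models E_q(f_n,f_{n'})$ for every $q\in P$ as soon as $n,n'\ge mk$. By the definition of $E_q$ in $\MM_P$, this forces $f_n(q)=f_{n'}(q)$ for every $q\in P$, hence $f_n=f_{n'}$; so the sequence is eventually constant. To promote this to an arbitrary $M\models\Phi_P$, I would invoke Fact~\ref{check}(3) to fix an isomorphic embedding $g:M\hookrightarrow\MM_P$. Nestedness is a condition on the quantifier-free types of the initial tuples of the sequence, so it is preserved by $g$; hence $(g(a_n):n\in\omega)$ is a nested sequence in $\MM_P$, eventually constant by the special case, and injectivity of $g$ transfers this back to $(a_n)$. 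Thus $\Phi_P$ forbids nested sequences. There is no real obstacle here: the corollary is a clean specialization of Lemma~\ref{boundeddelta}, with Fact~\ref{check}(3) handling the reduction from arbitrary models of $\Phi_P$ to the canonical model $\MM_P$.
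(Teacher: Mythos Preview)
Your proposal is correct and matches the paper's intended approach exactly: the corollary is stated without proof in the paper, as an immediate specialization of Lemma~\ref{boundeddelta} under the global height bound, with the parenthetical ``using Fact~\ref{check}(3)'' signalling precisely the reduction you spell out for arbitrary $M\models\Phi_P$. You have simply filled in the routine details the authors left implicit.
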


We culminate our previous results of this section into the following theorem.  We will use this theorem both when $P$ is bounded, and more generally, when we relativize to bounded subsets of $P$.

\begin{Theorem}   \label{dichotomy}  The following are equivalent for any $(P,\le,\delta)\in\PP$.
\begin{enumerate}
\item  $(P,\le,\delta)$ is unbounded;
\item  $\Phi_P$ admits absolutely indiscernible sets $\{D_n:n\in\omega\}$
with  each $D_n$ dense in $\F_P$;
\item  $\Phi_P$ admits absolutely indiscernible sets;
\item  $S_\infty$ divides $\Aut(M)$ for some countable $M\models \Phi_P$;
\item  $\Aut(\MM_P)$ has unbounded exponent;
\item  $\Phi_P$ admits nested sequences.
\end{enumerate}
\end{Theorem}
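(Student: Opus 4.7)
The plan is to prove the six conditions equivalent via a cycle $(1)\Rightarrow(2)\Rightarrow(3)\Rightarrow(4)\Rightarrow(5)\Rightarrow(1)$ together with the separate equivalence $(1)\Leftrightarrow(6)$. Most of the technical content already sits in the earlier results of this section, so the argument is largely an assembly.

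The implication $(1)\Rightarrow(2)$ is exactly Proposition~\ref{checkdensesuitable}. For $(2)\Rightarrow(3)$ I set $M:=\bigcup_{n\in\omega}D_n$: this is countable, and dense in $\F_P$ since each $D_n$ is, so Fact~\ref{check}(1,2) yields $M\preceq\MM_P$ and $M\models\Phi_P$. Each $\sigma^*\in\Aut(\MM_P)$ witnessing absolute indiscernibility of $\{D_n\}$ setwise permutes these sets, hence preserves $M$, and restricts to an automorphism of $M$ exhibiting $\{D_n\}$ as absolutely indiscernible inside $M$. The step $(3)\Rightarrow(4)$ is immediate from Fact~\ref{Allisonthm}.

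For $(4)\Rightarrow(5)$, suppose $\pi:G^*\to S_\infty$ is a continuous surjective homomorphism with $G^*\le\Aut(M)$ closed. For each $n<\omega$ pick an $n$-cycle $\tau_n\in S_\infty$ and some $g_n\in G^*$ with $\pi(g_n)=\tau_n$; any $k$ with $g_n^k=1$ forces $\tau_n^k=1$ and hence $n\mid k$, so $g_n$ has order at least $n$. Thus $\Aut(M)$ has unbounded exponent, and by Fact~\ref{check}(4) so does $\Aut(\MM_P)$. The implication $(5)\Rightarrow(1)$ is the contrapositive of Corollary~\ref{boundedconsequence}(1).

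For $(1)\Leftrightarrow(6)$: the direction $(6)\Rightarrow(1)$ is the contrapositive of Corollary~\ref{boundedconsequence}(2), where Fact~\ref{check}(3) transfers any nested sequence in an arbitrary model of $\Phi_P$ back into $\MM_P$. For $(1)\Rightarrow(6)$, I invoke Proposition~\ref{checkdensesuitable} to get a dense-suitable $\K\subseteq\Age(\MM_P)$ and then recursively build a non-constant nested sequence $(f_n)$ in $\MM_P$: start with any $\{f_0\}\in\K$ by Extendibility, and given distinct $f_0,\dots,f_n$ with $\{f_0,\dots,f_n\}\in\K$, apply Disjoint Amalgamation to $A:=\{f_0,\dots,f_{n-1}\}$ and $B=C:=A\cup\{f_n\}$ to obtain $B'=A\cup\{f'\}\in\K$ with $B'\cong_A B$ and $B'\cap C=A$. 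Setting $f_{n+1}:=f'$ gives $f_{n+1}\ne f_n$, $f_{n+1}\notin A$, and $\qftp(A,f_{n+1})=\qftp(A,f_n)$, which is precisely the nested condition; and $\{f_0,\dots,f_{n+1}\}\in\K$ lets the induction continue. The main subtlety is in $(1)\Rightarrow(6)$: a tempting alternative would be to combine $(4)$ with Corollary~\ref{forbidcli}, arguing that $S_\infty$ dividing $\Aut(M)$ contradicts $\Aut(M)$ being cli, but preservation of cli under the relevant Polish-group quotients is delicate; the Disjoint Amalgamation construction bypasses this issue cleanly.
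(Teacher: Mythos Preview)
Your proof is correct, and the cycle $(1)\Rightarrow(2)\Rightarrow(3)\Rightarrow(4)\Rightarrow(5)\Rightarrow(1)$ matches the paper's argument essentially step for step (the paper treats $(2)\Rightarrow(3)$ as immediate, but your extra detail about passing to $M=\bigcup_n D_n$ is harmless and arguably clarifies the intended reading of (2)).

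The genuine difference lies in how condition (6) is tied into the equivalence. The paper closes the loop via $(4)\Rightarrow(6)$, argued by contrapositive: if $\Phi_P$ forbids nested sequences then so does every expansion, whence by Theorem~\ref{topTrivialTheorem} no expansion of $\Phi_P$ is Borel complete, and then Theorem~5.5 of \cite{expansions} forces $S_\infty$ to divide no $\Aut(M)$ for countable $M\models\Phi_P$. Your route instead proves $(1)\Rightarrow(6)$ directly, using Disjoint Amalgamation in the dense-suitable class $\K$ produced by Proposition~\ref{checkdensesuitable} to manufacture an injective nested sequence inside $\MM_P$. Your argument is more elementary and entirely self-contained, avoiding both the Borel-complexity detour through Theorem~\ref{topTrivialTheorem} and the external citation to \cite{expansions}; the paper's argument, while less direct, has the virtue of showing that the complexity machinery already in place suffices, without revisiting the amalgamation construction. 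Your closing remark about the cli route being delicate is apt; the paper simply sidesteps that issue by going through Borel completeness instead.
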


\begin{proof}
$(1)\Rightarrow(2)$ is by Proposition~\ref{checkdensesuitable}.

$(2)\Rightarrow(3):$ is immediate.   

$(3)\Rightarrow(4):$ this is Fact~\ref{Allisonthm}.

$(4)\Rightarrow(5)$: suppose $S_\infty$ divides $\Aut(M)$. Then $\Aut(M)$ is of unbounded exponent. By Fact~\ref{check}(4), $\Aut(M)$ is isomorphic to a subgroup of $\Aut(\MM_P)$, hence the latter has unbounded exponent as well.

$(5)\Rightarrow(1)$  and $(6)\Rightarrow(1)$ follow directly from the two parts of Corollary~\ref{boundedconsequence}.

% $(1)\Rightarrow(6)$  Assume $(P,\le,\delta)$ is unbounded and we use Proposition~\ref{checkdensesuitable} to produce an infinite, non-constant sequence
% $(f_n:n\in\omega)$ by induction on $n$.  By Proposition~\ref{checkdensesuitable}
% choose a dense-suitable class $\K\subseteq \Age(M^*)$.
% Choose $f_0\in\F$ arbitrarily, and bty {\bf Extension} choose $f_1$ such that 
% the substructure with universe $\{f_0,f_1\}$ is in $\K$.
% Inductively, suppose $n\ge 1$ and $(f_0,\dots,f_n)$ have been defined with $B:=(f_0,\dots,f_n)\in\K$.  Let $A\subseteq B$ be the substructure with universe $\{f_i:i<n\}$
% and $C=B$.  By {\bf Disjoint Amalgamation} there is $f_{n+1}\in \F\setminus B$ such that
% $(f_0,\dots,f_n,f_{n+1})$ is in $\K$ with $A\cup\{f_{n+1}\}$ and $A\cup\{f_n\}$ isomorphic over $A$.  The resulting $\omega$-sequence $(f_n:n\in\omega)$ witnesses that $M^*$ 
% admits a nested sequence, hence any countable $M\preceq M^*$ containing 
% $\{f_n:n\in\omega\}$ will as well. 

$(4) \Rightarrow (6)$: Suppose (6) fails. Then every expansion of $\Phi_P$ forbits nested sequences, so by Theorem \ref{topTrivialTheorem}, no expansion of $\Phi_P$ is Borel complete, so (4) fails by Theorem~5.5 of \cite{expansions}. 
\end{proof}

\section{Quotients and Substructures}\label{QuotientsSection}
    Suppose $(P,\le,\delta)\in\PP$
and choose a non-empty subset $Q\subseteq P$.  Then $(Q,\le,\delta\mr{Q})\in \PP$, so we can apply all of the preceding discussion to this triple.  So recall that $\F_Q=\prod_{q\in Q} \delta(q)$ and $\MM_Q$
is the $\LL_Q$-structure with universe $\F_Q$, where
$$\MM_Q \models E_q(f,g)\quad \hbox{iff}\quad \bigwedge_{{q'\le q, q'\in Q}} f(q')=g(q')$$
%YYYY Moved definition of Q_{leq p} to where it is used

    \begin{Definition}{\em 
        Suppose $(P, \leq, \delta) \in \PP$ and $R \subseteq P$ is downward closed. Then let $E_R$ be the type-definable equivalence relation $\bigwedge_{p \in R} E_p$. Given $M \models \Phi_P$ let $[M]_R$ denote the set of $E_R$-equivalence classes $\{[f]_R: f \in M\}$. $[M]_R$ is naturally an $\LL_R$-structure, where we put $E_p([a]_{R} ,[b]_R)$ if and only if $E_p(a, b)$; this is well-defined because $R$ is downward closed.

        In the case when $M \preceq \MM_P$ let $M \mr{R} := \{a \mr{R}: a \in M\}$, a subset of $\MM_R$; we thus view $M \mr{R}$ as an $\LL_R$-structure.
        }
    \end{Definition}

    Clearly $[M]_R$ and $M \mr{R}$ are isomorphic, via the map sending $[a]_R$ to $a \mr{R}$. 

\begin{Lemma}
Suppose $(P, \leq, \delta) \in \PP$ and $M \models \Phi_P$ and $R \subseteq P$ is downward closed. Then $[M]_R \models \Phi_R$. 
\end{Lemma}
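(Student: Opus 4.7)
The plan is to verify each axiom of $\Phi_R = T_R \cup \{\Psi_R\}$ directly on $[M]_R$, exploiting the hypothesis that $R$ is downward closed in $P$. The sentence $\Psi_R$ holds essentially by construction: if $E_q([a]_R, [b]_R)$ for every $q \in R$, then $E_q(a,b)$ holds in $M$ for every $q \in R$, so $a \, E_R \, b$, so $[a]_R = [b]_R$. So the substance is in verifying the $T_R$ axioms.

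For the equivalence-relation and refinement axioms, the key observation is that downward closure of $R$ is exactly what makes the interpretation $E_q([a]_R,[b]_R) \iff E_q(a,b)$ well-defined and compatible with refinement: if $a \, E_R \, a'$ and $q \in R$, then in particular $a \, E_q \, a'$. For the splitting axiom, I would note that for $q \in R$, the formula $E_{<q}$ computed within $R$ coincides with $E_{<q}$ computed within $P$, because $R \supseteq P_{\le q}$. Hence, given $a \in M$, the $E_q$-classes of $[M]_R$ lying inside the $E_{<q}$-class of $[a]_R$ correspond bijectively to the $E_q$-classes of $M$ lying inside the $E_{<q}$-class of $a$, which are $\delta(q)$ many by $M \models T_P$.

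The only step that requires more than unwinding is the amalgamation axiom for $T_R$. Suppose $Q \subseteq R$ is finite, downward closed within $R$, and we have a coherent family $([a_q]_R : q \in Q)$, meaning $E_{q'}(a_{q'},a_q)$ holds in $M$ for all $q' \le q$ in $Q$. I would first observe the transitivity of ``downward closed'': since $R$ is downward closed in $P$ and $Q$ is downward closed in $R$, $Q$ is itself downward closed in $P$. The amalgamation axiom in $T_P$ then yields $a^\ast \in M$ with $E_q(a^\ast, a_q)$ for every $q \in Q$, and $[a^\ast]_R$ is the required amalgam in $[M]_R$.

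I do not anticipate any real obstacle; the only ``content'' is noting that downward closure passes through the relativization $R \subseteq P$ so that we may invoke the axioms of $T_P$ on downward closed subsets lying in $R$. Once that is made explicit, everything reduces to routine verification.
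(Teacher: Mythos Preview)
Your proof is correct, but it takes a different route from the paper's. The paper first reduces to the case $M\preceq \MM_P$ via Fact~\ref{check}(3), then uses the identification $[M]_R\cong M\mr{R}$ together with the density characterization of Fact~\ref{check}(1): since $M$ is dense in $\F_P$, the restriction $M\mr{R}$ is dense in $\F_R$, hence $M\mr{R}\preceq \MM_R$ and so $M\mr{R}\models\Phi_R$. Your argument instead verifies the axioms of $T_R\cup\{\Psi_R\}$ directly on $[M]_R$, the key step being that downward closure of $R$ forces $P_{<q}\subseteq R$ for every $q\in R$, so the formulas $E_{<q}$ agree whether computed in $P$ or in $R$, and downward-closed subsets of $R$ are downward closed in $P$. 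The paper's approach is shorter and more conceptual, leaning on the ambient model $\MM_P$; yours is more elementary and self-contained, working purely from the axioms without ever invoking $\MM_P$ or the density criterion.
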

\begin{proof}
    It suffices to consider the case $M \preceq \MM_P$, and then show that $M\mr{R} \models \Phi_R$, i.e. is dense in $\MM_R$. But this is clear, since $M$ is dense in $\MM_P$.
\end{proof}

    \begin{Lemma}  \label{welldefined}  Suppose $(P,\le,\delta)\in\PP$ and $R\subseteq P$ downward closed.  Let $M, N\models \Phi_P$ be arbitrary. Then any $\LL_P$-embedding $f: M \to N$ induces an $\LL_R$-embedding $[f]_R: [M]_R \to [N]_R$. When $f$ is an isomorphism so is $[f]_R$.
    \end{Lemma}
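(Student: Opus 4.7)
The plan is to define $[f]_R([a]_R) := [f(a)]_R$ for each $a \in M$ and verify the required properties by unwinding definitions. The key observation throughout is that $E_R = \bigwedge_{p \in R} E_p$ is a type-definable equivalence relation expressed purely in terms of the $\LL_P$-symbols $E_p$ with $p \in R$, and an $\LL_P$-embedding preserves and reflects each $E_p$.

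First I would check that $[f]_R$ is well-defined and injective. Suppose $a, a' \in M$ with $[a]_R = [a']_R$, i.e., $M \models E_p(a, a')$ for every $p \in R$. Since $f$ is an $\LL_P$-embedding, $N \models E_p(f(a), f(a'))$ for every $p \in R$, whence $[f(a)]_R = [f(a')]_R$. The same chain of equivalences run in reverse, using that $\LL_P$-embeddings also reflect each atomic formula $E_p$, yields injectivity.

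Next I would verify that $[f]_R$ preserves the $\LL_R$-structure. Fix $p \in R$ and $a, b \in M$. Because $R$ is downward closed, the relation $E_p$ on $[M]_R$ is well-defined by $E_p([a]_R,[b]_R) \iff E_p(a,b)$, and similarly for $[N]_R$. So the condition $[M]_R \models E_p([a]_R,[b]_R)$ becomes simply $M \models E_p(a,b)$, which is equivalent (using that $f$ is an $\LL_P$-embedding) to $N \models E_p(f(a),f(b))$, i.e.\ to $[N]_R \models E_p([f(a)]_R,[f(b)]_R)$. Thus $[f]_R$ is an $\LL_R$-embedding.

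Finally, assume $f$ is an isomorphism, so $f$ is surjective. Given any $[c]_R \in [N]_R$ with $c \in N$, pick $a \in M$ with $f(a) = c$; then $[f]_R([a]_R) = [f(a)]_R = [c]_R$, showing $[f]_R$ is onto, hence an $\LL_R$-isomorphism. There is no genuine obstacle here: the only point requiring care is invoking downward closure of $R$ when declaring that $E_p$ on the quotient is well-defined, and that observation was already made when $[M]_R$ was defined.
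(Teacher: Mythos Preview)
Your proof is correct and follows essentially the same approach as the paper: both hinge on the fact that $E_R$ is a conjunction of atomic $\LL_P$-formulas, hence preserved and reflected by any $\LL_P$-embedding, which makes $[f]_R$ a well-defined injective $\LL_R$-embedding. The only cosmetic difference is that for the isomorphism case you verify surjectivity directly from surjectivity of $f$, whereas the paper observes that $[f^{-1}]_R$ serves as an inverse to $[f]_R$; these are equivalent one-line arguments.
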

    It follows that when $M, N \preceq \MM_P$ we get a corresponding map $f': M \mr{R} \to N \mr{R}$, which is an isomorphism if $f$ is.

    \begin{proof} Since $E_R$ is type-definable, hence invariant, $f$ must induce an injection $[f]_R$ on $E_R$-classes. The definition of the structure on $[M]_R$ shows that this $[f]_R$ must be an $\LL_R$-embedding. When $f$ is an isomorphism, $[f^{-1}]_R$ is an inverse to $[f]_R$, so $[f]_R$ is an isomorphism.
    % To see that $\sigma$ is well-defined, choose $f,f'\in M$ with $f\mr{R}=f'\mr{R}=s$.  Since $R$ is downward closed,
    % $M\models E_r(f,f')$ for every $r\in R$.
    % Since $\tau$ is an automorphism, $M\models E_r(\tau(f),\tau(f'))$ for every $r\in R$, hence $M_R\models E_r(\tau(f)\mr{R},\tau(f')\mr{R})$
    % for every $r\in R$.  Thus, $\tau(f)\mr{R}=\tau(f)\mr{R}$.  That $\sigma\in \Aut(M_R)$ now follows by Fact~\ref{uselater}(2).
    \end{proof}

We now study the structure on individual $E_R$-classes. For this a definition is convenient:

\begin{Definition}{\em
Suppose $(P, \leq, \delta) \in \PP$ and $R \subseteq P$ is downward closed. Then let $\Phi^{\forall}_{PR}$ denote the universal sentence of $\mathcal{L}_{\omega_1 \omega}$ in the language $\LL_P$ asserting of its putative model $M$:

\begin{itemize}
    \item Each $E_q$ is an equivalence relation;
    \item If $q' \leq q$, then $E_q$ refines $E_{q'}$ and, moreover, letting $E_{<q}(x, y) = \bigwedge_{q' < q} E_{q'}(x, y)$, then $E_q$ partitions each $E_{<q}$-class into at most $\delta(q)$ classes;
    \item For all $a, b \in M$, we have that $E_R(a, b)$ holds;
    \item For all $a, b \in M$, if $E_P(a, b)$ then $a = b$.
\end{itemize} }
\end{Definition}

Clearly, if $M \models \Phi_P$ and $\alpha$ is an $E_R$-class, viewed as a substructure of $M$, then $\alpha \models \Phi^{\forall}_{PR}$. We shall need the following fact:

\begin{Lemma}\label{ClassesForbidNested}
Suppose $(P, \leq, \delta) \in \PP$ and $R \subseteq P$ is downward closed. Suppose $P \backslash R$ is bounded. Then $\Phi^{\forall}_{PR}$ forbids nested sequences.
\end{Lemma}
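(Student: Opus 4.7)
The plan is to mimic the proof of Lemma~\ref{boundeddelta}(2) (equivalently Corollary~\ref{boundedconsequence}(2)), but run the pigeonhole argument on $Q := P \setminus R$ rather than on all of $P$. The decisive observation is that in any $M \models \Phi_{PR}^{\forall}$, the axiom $\forall x\forall y\, E_R(x,y)$ forces $E_{q'}$ to hold on every pair of elements for every $q' \in R$; hence all non-trivial information on $M$ is carried by $\{E_q : q \in Q\}$. Moreover, for $q \in Q$, the relation $E_{<q}$ as computed in $M$ coincides with $\bigwedge_{q' < q,\, q' \in Q} E_{q'}$, and the axioms of $\Phi_{PR}^{\forall}$ still guarantee that each $E_{<q}$-class on $M$ splits into at most $\delta(q)$ many $E_q$-classes.

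By hypothesis fix $k,m<\omega$ with $\delta(q)\le m$ and the height of $q$ in $Q$ at most $k$, for every $q\in Q$. Given a nested sequence $(a_n : n \in \omega) \subseteq M$, I will show by induction on $j \le k$ that for every $q\in Q$ whose height in $Q$ is at most $j$, we have $M\models E_q(a_n,a_{n'})$ for all $n,n' \ge jm$. For the base $j=1$: any $Q$-minimal $q$ has empty $Q$-predecessor set, so $E_{<q}$ is trivially true on $M$ and the axioms allow at most $\delta(q)\le m$ equivalence classes of $E_q$; pigeonholing on $a_0,\ldots,a_m$ produces indices $i < j' \le m$ with $E_q(a_i,a_{j'})$, and the nested property (i.e.\ that the truth value of $E_q(a_i,a_n)$ is the same for every $n \ge i+1$) propagates this to $E_q(a_n,a_{n'})$ for all $n,n'\ge m$.

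The inductive step at height $j+1$ exploits that every $q'\in Q$ with $q'<q$ has height in $Q$ at most $j$ (else a chain in $Q$ of length $j+1$ ending at $q'$, extended by $q$, would contradict $\hgt_Q(q)=j+1$); by induction and down-finiteness of $P$ we get $E_{q'}(a_n,a_{n'})$ for all $n,n'\ge jm$ and all such $q'$, hence $E_{<q}(a_n,a_{n'})$ for all $n,n'\ge jm$. The axioms then allow at most $\delta(q)\le m$ many $E_q$-classes inside this single $E_{<q}$-class, so pigeonhole on $a_{jm},\ldots,a_{jm+m}$ followed by nestedness yields $E_q(a_n,a_{n'})$ for all $n,n'\ge (j+1)m$, completing the induction.

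Taking $j=k$, for all $n,n'\ge km$ we obtain $E_q(a_n,a_{n'})$ for every $q\in Q$, which together with the trivial case $q\in R$ amounts to $E_P(a_n,a_{n'})$. The final clause of $\Phi_{PR}^{\forall}$ then forces $a_n = a_{n'}$, so the sequence is eventually constant, as required. There is no real obstacle here: the argument is just a localized version of Corollary~\ref{boundedconsequence}(2), and the only care required is bookkeeping heights inside the (not-necessarily-downward-closed) subposet $Q$, which is exactly what the hypothesis ``$P\setminus R$ is bounded'' provides.
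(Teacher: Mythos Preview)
Your proof is correct. It differs from the paper's route, though the underlying combinatorics are the same.

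The paper argues indirectly: it takes the canonical structure $\MM_{P\setminus R}$, expands it trivially to a model $\MM$ of $\Phi^{\forall}_{PR}$, observes that $\MM$ forbids nested sequences by quoting Corollary~\ref{boundedconsequence}(2) (via Theorem~\ref{dichotomy}), and then notes that every model of $\Phi^{\forall}_{PR}$ embeds into $\MM$ by the argument of Fact~\ref{check}(3). You instead rerun the pigeonhole induction of Lemma~\ref{boundeddelta}(2) directly inside an arbitrary $M\models\Phi^{\forall}_{PR}$, carrying out the height induction in the subposet $Q=P\setminus R$ and using only the ``at most $\delta(q)$ classes'' clause of the universal axioms. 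Your route is more self-contained and sidesteps the embedding step the paper waves at; the paper's route is more modular, simply invoking results already on the shelf. Either way the essential content is the same bounded-height pigeonhole argument.
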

\begin{proof}
Let $\MM$ be the unique expansion of $\MM_{P \backslash R}$ to a model of $\Phi^{\forall}_{PR}$, namely let $E_p(a, b)$ always hold for $p \in R$. By Theorem~\ref{dichotomy}, $\MM_{P \backslash R}$ forbids nested sequences, hence so does $\MM$. Thus, it suffices to show that every model of $\Phi^{\forall}_{P R}$ embeds isomorphically into $\MM$. This is like Fact~\ref{check}(3); we only used there that $M$ was a model of the universal part of $\Phi_P$.
\end{proof}
\section{Subdivisions}
The following facts are immediate.  
\begin{Fact} \label{uselater} Suppose $(P,\le,\delta)\in\PP$ and $Q\subseteq P$ is non-empty.  Then for any $f,g\in\F_P$ and
$q\in Q$ we have:
\begin{enumerate}
    \item  If $\MM_P\models E_q(f,g)$
    then $\MM_Q\models E_q(f\mr{Q},g\mr{Q})$;
    \item  If $Q$ is downward closed then $\MM_Q\models E_q(f\mr{Q},g\mr{Q})$ implies $\MM_P\models E_q(f,g)$.
\end{enumerate}
\end{Fact}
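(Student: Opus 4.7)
The plan is to unpack the definitions of $E_q$ in $\MM_P$ and $\MM_Q$ and observe that the two conditions differ only in the range over which the indexing element $q'$ is taken. Recall from Definition~\ref{start} that $\MM_P \models E_q(f,g)$ iff $f(q') = g(q')$ for every $q' \in P$ with $q' \le q$, while $\MM_Q \models E_q(f\mr{Q}, g\mr{Q})$ iff $f(q') = g(q')$ for every $q' \in Q$ with $q' \le q$.

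For (1), fix $f,g \in \F_P$ and $q \in Q$ with $\MM_P \models E_q(f,g)$. Any $q' \in Q$ with $q' \le q$ is in particular an element of $P$ with $q' \le q$, since $Q \subseteq P$ and the order on $Q$ is inherited from $P$. Hence $f(q') = g(q')$ for every such $q'$, which is precisely the condition for $\MM_Q \models E_q(f\mr{Q}, g\mr{Q})$.

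For (2), assume additionally that $Q$ is downward closed and suppose $\MM_Q \models E_q(f\mr{Q}, g\mr{Q})$. We must show $f(q') = g(q')$ for every $q' \in P$ with $q' \le q$. Fix such a $q'$; since $q \in Q$ and $Q$ is downward closed and $q' \le q$, we have $q' \in Q$. The hypothesis then yields $f(q') = g(q')$, so $\MM_P \models E_q(f,g)$.

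There is no real obstacle here: the facts are immediate consequences of unwinding the definitions, and the downward closure hypothesis in (2) is exactly what is needed to pass from quantification over $Q$ back to quantification over $P$.
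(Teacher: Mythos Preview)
Your proof is correct and matches the paper's approach exactly: the paper simply states that ``the following facts are immediate'' and gives no further argument, so your unpacking of the definitions is precisely the intended verification.
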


It is noteworthy that if $Q\subseteq P$ is not downward closed, then Fact~\ref{uselater}(2) can fail.  The Lemma below is a partial remedy. If $Q \subseteq P$ then for each $p \in P$ let $Q_{\leq p} = P_{\leq p} \cap Q$.

\begin{Lemma} \label{shift2}  Suppose $(P,\le,\delta)\in\PP$ and $\{Q_i:i<n\}$ be a partition of $P$.  Then for all $f,g\in \F_P$ and all $p\in P$,
$\MM_P\models E_p(f,g)$ if and only if  $\MM_{Q_i}\models E_q(f\mr{Q_i},g\mr{Q_i})$ for all $i<n$ and   $q\in Q_{\le p}^i$.
\end{Lemma}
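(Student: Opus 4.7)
The plan is to simply unpack the definitions on both sides and match them up, using that $\{Q_i : i<n\}$ is a partition so every element of $P$ lies in exactly one part.

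First I would recall what each side says. By the interpretation of $E_p$ in $\MM_P$, the left-hand side $\MM_P \models E_p(f,g)$ is equivalent to
\[
 f(q')=g(q') \quad\text{for every } q' \in P \text{ with } q'\le p.
\]
Similarly, by the interpretation of $E_q$ in $\MM_{Q_i}$ (applied to $(Q_i,\le,\delta\mr{Q_i})\in\PP$), the right-hand side condition $\MM_{Q_i}\models E_q(f\mr{Q_i},g\mr{Q_i})$ for all $i<n$ and all $q\in Q^i_{\le p}$ is equivalent to
\[
 f(q'')=g(q'') \quad\text{for every } i<n,\ q\in Q_i\cap P_{\le p},\ q''\in Q_i \text{ with } q''\le q.
\]

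For the forward direction, assume the left-hand side. Fix $i<n$, $q\in Q^i_{\le p}$, and $q''\in Q_i$ with $q''\le q$. Then $q''\le q\le p$ in $P$, so $f(q'')=g(q'')$ by the left-hand side.

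For the converse, assume the right-hand side and fix any $q'\in P$ with $q'\le p$. Since $\{Q_i:i<n\}$ partitions $P$, there is a unique $i<n$ with $q'\in Q_i$; then $q'\in Q_i\cap P_{\le p}=Q^i_{\le p}$. Taking $q=q'$ and $q''=q'$ in the right-hand condition yields $f(q')=g(q')$, as desired.

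There is no real obstacle here; the lemma is a purely formal consequence of the definitions once one notes that every $q'\le p$ lies in a unique part $Q_i$ and can be used both as the index $q$ and as the witness $q''$ in the right-hand side.
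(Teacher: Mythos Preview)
Your proof is correct and is essentially the same as the paper's: both arguments simply unpack the definitions of $E_p$ in $\MM_P$ and $E_q$ in $\MM_{Q_i}$, and use that the partition $\{Q_i:i<n\}$ satisfies $P_{\le p}=\bigcup_{i<n} Q^i_{\le p}$. The paper compresses this into a one-line chain of equivalences, but the content is identical.
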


\begin{proof}   $\MM_P\models E_p(f,g)$ iff $f(q)=g(q)$ for all $q\in P_{\le p}$ iff $f(q)=g(q)$ for all $i<n$ and all $q\in Q^i_{\le p}$ iff
$\MM_{Q_i}\models E_q(f\mr{Q_i},g\mr{Q_i})$
for all $i<n$ and all $q\in Q_{\le p}^i$.
\end{proof}

% \begin{Lemma}\label{AutomorphismPatching}
%     Suppose $\{Q_i: i < n\}$ is a partition of $P$ into pieces and, for each $i < n$, $\sigma_i\in \Aut M^*_{Q_i}$.
%     Define $\tau: \mathcal{F}(P, \delta) \to \mathcal{F}(P, \delta)$ via $\tau(f) = \bigcup_{i < n} \sigma_i(f \restriction_{Q_i})$. Then $\tau \in \Aut(M^*)$.
% \end{Lemma}
% \begin{proof}
%     $\tau$ is clearly a bijection. It suffices to show $\tau$ preserves $E_p$ where $p \in P$. For each $i < n$ let $A_i$ denote $\{q \in Q_i: q \leq p\}$. 
%     Then $f E_p f'$ if and only if for each $i < n$, $f \restriction_{Q_i} E_{A_i} f' \restriction_{Q_i}$, which is the case if and only if each $g_i(f) \restriction_{Q_i} E_{A_i} g_i(f') \restriction_{Q_i}$, which is the case if and only if $g(f) E_p g(f')$.
% \end{proof}

\begin{Lemma}\label{AutomorphismPatching}
    Suppose  $(P,\le,\delta)\in\PP$ and suppose 
    $\{Q_i: i < n\}$ is a partition of $P$ into pieces, and each $i < n$,  suppose $M_{Q_i}\preceq \MM_{Q_i}$.
    Then, letting $M:=\{f\in\F:f\mr{Q_i}\in M_{Q_i}$ for all $i<n\}$, we have $M\preceq \MM_P$ and, for any choice of automorphisms 
     $\sigma_i\in \Aut(M_{Q_i})$, the map $\tau:M\rightarrow M$
     defined as $\tau(f) = \bigcup_{i < n} \sigma_i(f \mr{Q_i})$ is an automorphism of $M$.
\end{Lemma}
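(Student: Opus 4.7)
The plan is to verify the two claims separately: first that $M \preceq \MM_P$, and then that $\tau$ is an automorphism of $M$. Both reduce, via results already in the paper, to purely coordinate-wise verifications.

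For the first claim, by Fact~\ref{check}(1) it suffices to show that $M$ is dense in $\F_P$. A basic open set in $\F_P$ is determined by finitely many coordinates $Q \subseteq P$ and a choice $s \in \prod_{q \in Q} \delta(q)$. I would partition $Q = \bigsqcup_{i<n} (Q \cap Q_i)$. Since each $M_{Q_i}\preceq \MM_{Q_i}$, each $M_{Q_i}$ is dense in $\F_{Q_i}$ by Fact~\ref{check}(1), so for each $i$ I can pick $f_i \in M_{Q_i}$ with $f_i\mr{Q\cap Q_i} = s\mr{Q\cap Q_i}$. Then $f := \bigcup_{i<n} f_i \in \F_P$ satisfies $f\mr{Q_i} = f_i \in M_{Q_i}$ for all $i$, so $f \in M$ and $f\mr{Q} = s$.

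For the second claim, I would first observe that $\tau$ is a well-defined bijection $M \to M$: given $f \in M$, each $\sigma_i(f\mr{Q_i}) \in M_{Q_i}$, so $\tau(f)\mr{Q_i} \in M_{Q_i}$ for every $i$, placing $\tau(f) \in M$; the inverse is $f \mapsto \bigcup_i \sigma_i^{-1}(f\mr{Q_i})$. Since the language $\LL_P$ consists only of the $E_p$'s, it remains to check that $\tau$ preserves each $E_p$. This is where Lemma~\ref{shift2} does the work: for any $f,g \in M$ and $p \in P$,
\[
\MM_P \models E_p(\tau(f),\tau(g)) \iff \forall i<n,\ \forall q \in Q^i_{\le p}: \MM_{Q_i} \models E_q(\sigma_i(f\mr{Q_i}), \sigma_i(g\mr{Q_i})).
\]
Each $\sigma_i \in \Aut(M_{Q_i})$ preserves every $E_q$ for $q \in Q_i$, so the right-hand side is equivalent to $\MM_{Q_i} \models E_q(f\mr{Q_i}, g\mr{Q_i})$ for all $i$ and $q \in Q^i_{\le p}$, which by Lemma~\ref{shift2} again is equivalent to $\MM_P \models E_p(f,g)$.

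There is no real obstacle here: the lemma is essentially a bookkeeping consequence of the earlier fact that models of $\Phi_P$ are exactly dense subsets of $\F_P$ (Fact~\ref{check}(1)) together with the coordinate-wise characterization of $E_p$ provided by Lemma~\ref{shift2}. The only mild care needed is to note that $Q$ is allowed to be an arbitrary finite subset of $P$ in the density argument (not necessarily downward closed), which does not matter since each $M_{Q_i}$ is dense in $\F_{Q_i}$ as topological subspaces.
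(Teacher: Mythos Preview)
Your proof is correct and follows essentially the same approach as the paper: both arguments reduce preservation of $E_p$ to the coordinate-wise criterion of Lemma~\ref{shift2} and use that each $\sigma_i$ preserves the $E_q$'s on its piece. You are in fact slightly more thorough, since you explicitly verify $M\preceq\MM_P$ via the density characterization of Fact~\ref{check}(1), whereas the paper leaves this step implicit.
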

\begin{proof}
    $\tau$ is clearly a bijection, so it suffices to show $\tau$ preserves 
    $E_p$ for every $p\in P$.  Given $p$, choose any $f,g\in M$.  Since each $\sigma_i\in\Aut(M_{Q_i})$ for every $i<n$ we have
    $$M_{Q_i}\models E_q(f\mr{Q_i},g\mr{Q_i}) \quad \leftrightarrow\quad 
    M_{Q_i}\models E_q(\sigma_i(f\mr{Q_i}),\sigma_i(g\mr{Q_i}))$$
    for every $q\in Q^i_{\le p}$.  However, for each $i<n$, $\tau(f)\mr{Q_i}=\sigma_i(f\mr{Q_i})$ and $\tau(g)\mr{Q_i}=\sigma_i(g\mr{Q_i})$, so
    we conclude that
    $$M\models E_p(f,g) \quad\leftrightarrow \quad M\models E_p(\tau(f),\tau(g))$$
    by two applications of Lemma~\ref{shift2}.
    \end{proof}

\begin{Theorem}  \label{bot}
If $(P,\le,\delta)\in\PP$, $Q_0,Q_1\subseteq P$ are orthogonal (see Definition~\ref{botDef}), and both $Q_0$, $Q_1$ are unbounded, then
$\CC(M)$ is Borel complete for some countable $M\models\Phi_P$, hence $\mathcal{C}(\Phi_P)$ is Borel complete.
\end{Theorem}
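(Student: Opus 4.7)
The plan is to exhibit a countable $M \models \Phi_P$ admitting cross-cutting absolutely indiscernible sets (Definition~\ref{cross-cutting}) and then invoke Theorem~\ref{binary}, which immediately yields that $\CC(M)$ is Borel complete; since $\CC(M) \subseteq \CC(\Phi_P)$, the full conclusion follows.

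Since orthogonal subsets are disjoint, $\{Q_0, Q_1, S\}$ partitions $P$, where $S := P \setminus (Q_0 \cup Q_1)$. Set $R_i := \dc(Q_i)$ for $i = 0, 1$; by orthogonality no element of $Q_{1-i}$ lies below any element of $Q_i$, so $R_i \cap Q_{1-i} = \emptyset$, and hence the potential overlap $R_c := R_0 \cap R_1$ is contained in $S$. Because $(Q_i, \leq\mr{Q_i}, \delta\mr{Q_i})$ is unbounded, the first step is to apply Proposition~\ref{checkdensesuitable} and Lemma~\ref{quote} to it, obtaining a countable $M_{Q_i} \preceq \MM_{Q_i}$ together with a family $\{F^i_n : n \in \omega\}$ of pairwise disjoint subsets of $M_{Q_i}$ that are absolutely indiscernible in $M_{Q_i}$. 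Pick any countable $M_S \preceq \MM_S$. By Lemma~\ref{AutomorphismPatching},
\[
M := \{f \in \F_P : f\mr{Q_0} \in M_{Q_0},\ f\mr{Q_1} \in M_{Q_1},\ f\mr{S} \in M_S\}
\]
is a countable elementary substructure of $\MM_P$ in which automorphisms of the three pieces combine coordinatewise into automorphisms of $M$.

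Fix any $h \in M_S$ and define
\[
D^0_n := \{f \in M : f\mr{Q_0} \in F^0_n,\ f\mr{S} = h\}, \qquad D^1_m := \{f \in M : f\mr{Q_1} \in F^1_m,\ f\mr{S} = h\},
\]
and take $E_0 := E_{R_0}$, $E_1 := E_{R_1}$, both $\Aut(M)$-invariant since they are type-definable. Conditions (2) and (3) of Definition~\ref{cross-cutting} are then reasonably direct: pairwise disjointness of the $F^i_n$ guarantees that distinct $D^0_n$, $D^0_{n'}$ have distinct $Q_0$-restrictions (the $S$-part being pinned at $h$), and since $Q_0 \subseteq R_0$ this forces them into different $E_0$-classes; for (3) one combines the $\tau_i \in \Aut(M_{Q_i})$ witnessing absolute indiscernibility of $\{F^i_n\}$ with the identity on $M_S$ (which fixes $h$) via Lemma~\ref{AutomorphismPatching}, and the resulting $\tau \in \Aut(M)$ permutes the $D^i_*$ as required.

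The main obstacle is cross-cutting (condition (1)): given $a \in D^0_n$ and $b \in D^1_m$, one seeks $c \in M$ with $E_0(c, a)$ and $E_1(c, b)$, i.e., $c\mr{R_0} = a\mr{R_0}$ and $c\mr{R_1} = b\mr{R_1}$. For such a $c$ to be consistent, $a$ and $b$ must agree on $R_c = R_0 \cap R_1$, which in general is nonempty (e.g. when $P$ consists of two antichains sitting above a common stem). This is precisely the reason for pinning $f\mr{S} = h$ on the $D^i$-sets: since $R_c \subseteq S$, the restrictions $a\mr{R_c}$ and $b\mr{R_c}$ both coincide with $h\mr{R_c}$. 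Defining $c$ by $c\mr{Q_0} := a\mr{Q_0}$, $c\mr{Q_1} := b\mr{Q_1}$, $c\mr{S} := h$ yields an element of $M$ satisfying both $E_0(c, a)$ and $E_1(c, b)$, completing the verification.
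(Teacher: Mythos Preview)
Your proof is correct and follows essentially the same approach as the paper: partition $P$ into $Q_0$, $Q_1$, and the remainder, pull absolutely indiscernible dense families on each $Q_i$ via Proposition~\ref{checkdensesuitable}/Lemma~\ref{quote}, assemble $M$ as the coordinatewise product, pin the remainder coordinate to a single element (the paper uses $\overline{0}$, you use an arbitrary $h$), take the equivalence relations $E_{\dc(Q_i)}$, and verify Definition~\ref{cross-cutting} so that Theorem~\ref{binary} applies. The only cosmetic differences are notational (you write $S$ for the complement and $R_i$ for the downward closures, and you make the role of $R_0\cap R_1\subseteq S$ in condition~(1) a bit more explicit than the paper does).
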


\begin{proof}

By Theorem~\ref{dichotomy}, for $i=0,1$ choose dense, absolutely indiscernible countable sets
$(D^i_n:n\in\omega)$ for $\MM_{Q_i}$.
By Fact~\ref{check}(1), $\D^i_*=\bigcup\{D^i_n:n\in\omega\}$  is the universe of a countable model $M_{Q_i}\preceq \MM_{Q_i}$.  Let $R=P\setminus (Q_0\cup Q_1)$ and let
$\D_R\subseteq \F_R$ be dense and contain $\overline{0}$,
the identically zero sequence on $R$.
%YYYY The (if R \neq emptyset) parenthetical seems unneeded, see below for old version
% By Theorem~\ref{dichotomy}, for $i=0,1$ choose dense, absolutely indiscernible countable sets
% $(D^i_n:n\in\omega)$ for $\MM_{Q_i}$.
% By Fact~\ref{check}(1), $\D^i_*=\bigcup\{D^i_n:n\in\omega\}$  is the universe of a countable model $M_{Q_i}\preceq \MM_{Q_i}$.  Let $R=P\setminus (Q_0\cup Q_1)$ and (if $R\neq\emptyset$)let
% $\D_R\subseteq \F_R$ be dense and contain $\overline{0}$,
% the identically zero sequence on $R$.

Let $M\preceq \MM_P$ be the countable $\LL_P$-structure with
universe $\{f\in\F_P:f\mr{Q_0}\in\D^0_*$, $f\mr{Q_1}\in \D^1_*$, and $f\mr{R}\in\D_R\}$. 

For each $i=0,1$, let $E^i(x,y):=E_{\dc(Q_i)} = \bigwedge_{q\in\dc(Q_i)} E_q(x,y)$.  
For each $i = 0,1$ and each $n\in\omega$, let 
$$\tilde{D}^i_n=\{f\in\F:f\mr{Q_i}\in D^i_n\ \hbox{and}\ f\mr{R}=\overline{0}\}$$
We argue that the sets $(\tilde{D}^0_n:n\in\omega)$ and $(\tilde{D}^1_n:n\in\omega)$ are cross-cutting absolutely indiscernible sets
of subsets of $M$ with respect to $E^0,E^1$.   Being infinitarily definable,
the 
equivalence relations $E^0,E^1$ are $\Aut(M)$-invariant, so we check clauses (1)--(3) from Definition~\ref{cross-cutting}.  
For (1), given $a_0\in \tilde{D}^0_n$ and $a_1\in\tilde{D}^1_m$ and using the fact that $Q_0\bot Q_1$ implies $\dc(Q_0)\cap Q_1=\dc(Q_1)\cap Q_0=\emptyset$, 
and the fact that $a_0\mr{R}=a_1\mr{R}=\{\overline{0}\}$, 
choose $f\in M$ satisfying $f\mr{\dc(Q_0)}=a_0\mr{\dc(Q_0)}$ and 
$f\mr{\dc(Q_1)}=a_1\mr{\dc(Q_1)}$.  Then $E^0(f,a_0)$ and $E^1(f,a_1)$, as required.  For (2), suppose $a\in \tilde{D}^i_n$ and $a' \in \tilde{D}^i_*$ and $E^0(a,a')$.  Then
$a\mr{Q_i}\in D^i_n$ and $a'\mr{Q_i}=a\mr{Q_i}$, hence $a'\in \tilde{D}^i_n$ as well.  Finally, for (3), choose any $\pi_0,\pi_1\in \Sym(\omega)$.  As $\{D^i_n\}$
is a family of absolutely indiscernible sets, for $i=0,1$ 
choose $\sigma_i\in \Aut(M_{Q_i})$
such that $\sigma_i[D^i_n]=D^i_{\pi_i(n)}$ for every $n\in\omega$.
Now define 
$\tau:M\rightarrow M$  via $\tau(f) = \sigma_0(f \mr{Q_0}) \cup \sigma_1(f \mr{Q_1}) \cup f \mr{R}$. 
By Lemma~\ref{AutomorphismPatching}, $\tau\in\Aut(M)$ and $\tau[\tilde{D}^i_n]=\tilde{D}^i_{\pi_i(n)}$ for $i=0,1$, $n\in\omega$.
\end{proof}

\begin{Definition}{\em 
A subset $A \subseteq P$ is a (comparability) {\em antichain} if every pair of distinct elements from $A$ are incomparable.}
\end{Definition}

\begin{Corollary}\label{botCor}  Let $(P,\le,\delta)\in\PP$.

\begin{enumerate} 
\item  If $P$ is not narrow then some $M\models\Phi_P$ has $\C(M)$ is Borel complete.
\item  If $\delta$ is unbounded on some antichain $A\subseteq P$, then
some $M\models\Phi_P$ has $\C(M)$ Borel complete.
\end{enumerate}

\end{Corollary}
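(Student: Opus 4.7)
The plan is to derive both parts directly from Theorem~\ref{bot}, which promises the desired conclusion whenever we can exhibit two orthogonal unbounded subsets of $P$. Recall from Definition~\ref{taxonomy} that a subset $Q\subseteq P$ (viewed as a triple $(Q,\le\mr{Q},\delta\mr{Q})\in\PP$) is unbounded exactly when it fails to have both bounded height \emph{and} bounded $\delta$; in particular, either unbounded height or unbounded $\delta$ alone is enough.

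For part (1), suppose $P$ is not narrow. Unwinding Definition~\ref{botDef}, there exist orthogonal subsets $Q_0\perp Q_1$ of $P$ such that neither has bounded height. Each $Q_i$ therefore has unbounded height, hence is unbounded as an element of $\PP$, and Theorem~\ref{bot} applied with this pair produces a countable $M\models\Phi_P$ for which $\C(M)$ is Borel complete.

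For part (2), suppose $\delta$ is unbounded on an antichain $A\subseteq P$. Choose a sequence $(a_n:n<\omega)$ of distinct elements of $A$ with $\delta(a_n)\to\infty$, and split it into $A_0=\{a_{2n}:n<\omega\}$ and $A_1=\{a_{2n+1}:n<\omega\}$. Then $\delta$ is unbounded on each of $A_0,A_1$, so both are unbounded subsets of $P$. Since $A_0$ and $A_1$ are disjoint subsets of the antichain $A$, any $q\in A_0$ and $r\in A_1$ are distinct elements of $A$ and are therefore incomparable; hence $A_0\perp A_1$. Theorem~\ref{bot} applies again.

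There is no real obstacle here: the substantive content (building cross-cutting absolutely indiscernible sets from a pair of orthogonal unbounded subsets, via Theorem~\ref{dichotomy} and the amalgamation step in the proof of Theorem~\ref{bot}) is already packaged into Theorem~\ref{bot}, and the corollary is simply a matter of unpacking the definitions of \emph{narrow} and \emph{antichain} and producing the required pair.
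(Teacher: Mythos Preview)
Your proof is correct and essentially identical to the paper's: both parts reduce directly to Theorem~\ref{bot} by exhibiting a pair of orthogonal unbounded subsets, obtained in (1) by negating the definition of narrow and in (2) by splitting a sequence from the antichain with $\delta$-values tending to infinity into evens and odds.
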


\begin{proof} (1)  Suppose $Q \bot R$ witnesses that $P$ is not narrow. As $Q$ and $R$ are not bounded,  Theorem~\ref{bot} applies.

(2)  Choose a subsequence $(p_n:n\in\omega)$ from $A$ such that $\delta(p_{n+1})>\delta(p_n)$ for each $n$.  Put $Q_0=\{p_{2n}:n\in\omega\}$
and $Q_1:=\{p_{2n+1}:n\in\omega\}$.  Then again, 
$Q_0\bot Q_1$ and each $Q_i$ is unbounded, so Theorem~\ref{bot} applies.
\end{proof}

%YYYY Rest of section new
Thus, when trying to classify the Borel complexity of $T(P, \leq, \delta)$ it is enough to restrict attention to the case where $P$ is narrow and $\delta$ is bounded on every antichain. Note then:

\begin{Lemma}\label{Ramsey}
    Suppose $(P, \leq, \delta) \in \PP$ and $\delta$ is bounded on every antichain. Then $\delta$ is bounded on $Q \subseteq P$ whenever $Q$ is of bounded height. Hence, given $Q \subseteq P$, we have that $Q$ is bounded if and only if $Q$ is of bounded height.
\end{Lemma}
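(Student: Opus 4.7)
The second sentence follows immediately from the first together with the definition of ``bounded'' (bounded $=$ bounded height plus $\delta$ bounded), so the whole task reduces to the first assertion: assuming $\delta$ is bounded on every antichain of $P$, show that $\delta$ is bounded on every $Q\subseteq P$ of bounded height.

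I will argue by contradiction and use Ramsey's theorem. Suppose $Q\subseteq P$ has $\hgt(Q)\le k<\omega$ but $\delta\mr{Q}$ is unbounded. Pick a sequence $(p_n:n\in\omega)$ of distinct elements of $Q$ with $\delta(p_n)\to\infty$. Color the unordered pair $\{p_i,p_j\}$ (for $i\ne j$) according to whether $p_i,p_j$ are $\le$-comparable in $P$ or not. By the infinite Ramsey theorem, there is an infinite monochromatic subset $S\subseteq\{p_n:n\in\omega\}$, and $\delta$ remains unbounded on $S$.

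Now I split on the color. If every pair from $S$ is incomparable, then $S$ is an infinite antichain of $P$ on which $\delta$ is unbounded, contradicting the standing hypothesis. If every pair from $S$ is comparable, then $S$ is totally ordered by $\le$. Since $(P,\le)$ is down-finite, I can extract an $\omega$-chain from $S$ by the usual greedy argument: pick $s_0\in S$ arbitrarily, and note $S\cap P_{\le s_0}$ is finite because $P_{\le s_0}$ is finite; choose $s_1\in S\setminus P_{\le s_0}$, which must satisfy $s_1>s_0$ by total comparability; iterate. This produces an $\omega$-chain inside $Q$, contradicting $\hgt(Q)\le k$.

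The only mild subtlety is the down-finite step used to turn an infinite totally ordered subset into an $\omega$-chain (it could a priori have been of order type like $\mathbb{Z}^-$), but down-finiteness of $P$ forces cofinal growth upward. The Ramsey dichotomy handles the rest cleanly, and once the first assertion is in hand, the ``hence'' clause is immediate: $Q$ is bounded iff ($\hgt(Q)<\omega$ and $\delta\mr{Q}$ bounded), and by what we proved the second conjunct is automatic from the first under our hypothesis.
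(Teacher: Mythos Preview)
Your argument is correct and follows essentially the same route as the paper: a Ramsey argument on a sequence where $\delta$ is unbounded, ruling out the antichain case by hypothesis and the chain case by bounded height plus down-finiteness. The paper streamlines slightly by using a three-coloring (ascending, descending, incomparable) so that Ramsey directly yields an ascending chain, a descending chain, or an antichain, avoiding your greedy extraction step; but this is a cosmetic difference, not a substantive one.
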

\begin{proof}
    The second claim follows, so it suffices to show $\delta$ is bounded on $Q$ whenever $Q$ is of bounded height. Suppose towards a contradiction it were unbounded, say $\delta(p_n) \geq n$ with $p_n \in Q$. After applying Ramsey's theorem we can suppose $(p_n: n < \omega)$ is an ascending chain, descending chain or antichain. Ascending chain is impossibe because $Q$ has bounded height; descending chain is impossible because $Q$ is down-finite; and antichain is impossible by hypothesis.
\end{proof}

%LLLL-Added the following Corollary as I refer to it in the Introduction.  

The following Corollary generalizes the results on cross-cutting equivalence relations in \cite{reducts} and \cite{expansions}.

\begin{Corollary} \label{introCor} Suppose $(P,\le,\delta)\in \PP$ and $(P,\le)$ is of bounded height.  Then $T_P$ is Borel complete if and only if $\delta$ is bounded.
\end{Corollary}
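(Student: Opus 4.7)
The plan is to prove the biconditional in two directions, leaning entirely on machinery already set up in the paper: Corollary~\ref{boundedconsequence}, Theorem~\ref{topTrivialTheorem}, Corollary~\ref{botCor}(2), and the Borel equivalence $T_P \sim_B \CC(\Phi_P)$ from Lemma~\ref{shift1}. Throughout, the bounded-height hypothesis on $(P,\le)$ is what makes the two cases on $\delta$ collapse cleanly.

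For the direction yielding non-Borel-completeness, assume $(P,\le)$ has bounded height and $\delta$ is bounded on $P$. Then $(P,\le,\delta)$ is bounded in the sense of Definition~\ref{taxonomy}, so by Corollary~\ref{boundedconsequence}(2) the sentence $\Phi_P$ forbids nested sequences. Since forbidding nested sequences is preserved under arbitrary language expansions (the ``Moreover'' clause of Theorem~\ref{topTrivialTheorem}), and a coloring is just an expansion by unary predicates $\{U_n : n < \omega\}$, every element of $\CC(\Phi_P)$ also forbids nested sequences. Applying Theorem~\ref{topTrivialTheorem}(4) gives that `countable sets of countable sets of reals' do not Borel embed into $\Mod(\CC(\Phi_P))$; in particular $\CC(\Phi_P)$ is not Borel complete. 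By Lemma~\ref{shift1}, $T_P$ is Borel equivalent to $\CC(\Phi_P)$, so $T_P$ is not Borel complete either.

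For the converse, assume $(P,\le)$ has bounded height and $\delta$ is unbounded on $P$. I would argue, exactly as in Lemma~\ref{Ramsey}, using Ramsey's theorem: choose $(p_n : n < \omega) \subseteq P$ with $\delta(p_n) \to \infty$, and extract an infinite homogeneous subsequence that is an ascending chain, a descending chain, or an antichain. Bounded height forbids an infinite ascending chain, down-finiteness forbids an infinite descending chain, so we are left with an infinite antichain $A \subseteq P$ on which $\delta$ is unbounded. Corollary~\ref{botCor}(2) then produces a countable $M \models \Phi_P$ with $\CC(M)$ Borel complete. Since $\CC(M) \subseteq \CC(\Phi_P)$, the class $\CC(\Phi_P)$ is Borel complete, and Lemma~\ref{shift1} transfers this to $T_P$.

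I do not expect a serious obstacle: once one recognizes that both Corollary~\ref{boundedconsequence} and Corollary~\ref{botCor}(2) are already phrased so as to produce, respectively, forbidding-nested-sequences and Borel completeness of $\CC(M)$, the only real content is the Ramsey reduction of ``$\delta$ unbounded on $P$'' to ``$\delta$ unbounded on some antichain,'' which in the bounded-height, down-finite setting is immediate. The result is that the two poles described in the introduction for $\CC(\delta)$ (antichains of height one) extend verbatim to all bounded-height posets.
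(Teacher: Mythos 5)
Your proof is correct and follows essentially the same route as the paper: the bounded-$\delta$ case via Corollary~\ref{boundedconsequence}, Theorem~\ref{topTrivialTheorem} and Lemma~\ref{shift1}, and the unbounded-$\delta$ case via the Ramsey extraction of an antichain (which is exactly Lemma~\ref{Ramsey}, so you may simply cite it) followed by Corollary~\ref{botCor}(2) and Lemma~\ref{shift1}. Note that what you prove --- and what the paper's own proof establishes --- is that $T_P$ is Borel complete if and only if $\delta$ is \emph{unbounded}; the statement of the Corollary as printed has the polarity reversed, and your careful routing of the non-completeness direction through $\CC(\Phi_P)$ (rather than applying Theorem~\ref{topTrivialTheorem} to $T_P$ directly, which forbids nothing by compactness) is in fact slightly more precise than the paper's phrasing.
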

%YYYYY I would like to change Mod_omega(T_P) to T_P
\begin{proof}  If $\delta$ is bounded, then $(P,\le,\delta)$ is bounded, so `countable sets of countable sets of reals' do not Borel embed into $T_P$ by Theorem~\ref{topTrivialTheorem}.  By contrast, if $\delta$ is unbounded, then by Lemma~\ref{Ramsey}, $\delta$ is unbounded on some antichain, hence $\CC(M)$ is Borel complete for some countable $M\models \Phi_P$ by Corollary~\ref{botCor}.  Thus, $T_P$ is Borel complete by Lemma~\ref{shift1}.
\end{proof}

\section{Tame expansions}

\begin{Definition}  \label{tamedef} {\em Suppose $(P,\le,\delta)\in\P$ and $M\models T_P$.  
Fix any $p\in P$.  A subset $S\subseteq M^n$ is {\em $E_p$-invariant} if, for all $(a_0,\dots,a_{n-1}),(b_0,\dots,b_{n-1})\in M^n$, if $M\models \bigwedge_{i<n}E_p(a_i,b_i)$, then $[\abar\in S\leftrightarrow \bbar\in S]$.  
A subset $S\subseteq M^n$ is {\em tame} if $S$ is $E_p$-invariant for some $p\in P$.

Let $\LL^+=\LL_P\cup\{S_i(\xbar_i):i\in I\}$ where each $S_i$ is $n_i$-ary.  An expansion $M^+$ of $M \models T_P$  is a {\em tame expansion} if
the interpretation of every $S_i(\xbar_i)$ is a tame subset of $M^{n_i}$. A tame expansion of $T_P$ is the complete theory of a tame expansion $M^+$ of some $M \models T_P$.
}
\end{Definition}

As there are only finitely many $E_p$-classes, any tame $S\subseteq M^n$ is a finite union of `$E_p$-boxes', i.e., subsets of $M^n$ described by an $n$-tuple
$\alphabar=(\alpha_1,\dots,\alpha_n)$ of $E_p$-classes.
It follows that any tame expansion $M^+$ of any model $M\preceq \MM_P$ is mutually algebraic and, when $P$ is infinite, satisfies $\acl(X)=X$ for every $X\subseteq M^+$.
However, $Th(M^+)$ need not admit elimination of quantifiers in the language $\LL^+$.

\begin{Fact}\label{tameFacts}
Suppose $(P,\le,\delta) \in \PP$.
\begin{enumerate}
    \item For all $M \preceq N \models T_P$ and for all tame expansions $M^+$ of $M$, there is a unique tame expansion $N^+$ of $N$ with $M^+ \subseteq N^+$.
    \item If $M \preceq N \models T_P$ and $M^+ \subseteq N^+$ are tame expansions then $M^+ \preceq N^+$.
    \item Suppose $T$ is a tame expansion of $T_P$, with $P$ infinite. Then $T$ is Borel equivalent to $\CC(\Phi_P \land T)$. (If $P$ is finite then $T$ has exactly one countable model up to isomorphism.)
\end{enumerate}
\end{Fact}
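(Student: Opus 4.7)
The plan is to handle the three parts in order, all resting on the combinatorial content of tameness: a tame predicate $S_i$ with $S_i^{M^+}$ being $E_{p_i}$-invariant is encoded by a finite collection $S_i^+$ of $E_{p_i}$-class codes in $\prod_{q \le p_i}\delta(q)$, and by Fact~\ref{check}(1) this same data is realized identically in every model of $T_P$.

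For (1), take $p_i$ to be a level at which $S_i^{M^+}$ is invariant and declare $S_i^{N^+}$ to be the $E_{p_i}$-saturation of $S_i^{M^+}$ inside $N^{n_i}$. This set is visibly $E_{p_i}$-invariant and restricts correctly to $S_i^{M^+}$, since every $E_{p_i}$-class of $N^{n_i}$ meets $M^{n_i}$. For uniqueness, any other tame extension $S_i^{N^\ast}$ is $E_q$-invariant for some $q \in P$; by restricting, $S_i^{M^+}$ is $E_q$-invariant in $M$ too, and $S_i^{N^\ast}$ must be the $E_q$-saturation. To see the $E_{p_i}$- and $E_q$-saturations coincide, fix $\bar a \in N^{n_i}$, choose $\bar b, \bar c \in M^{n_i}$ with $\bigwedge_k E_{p_i}(a_k, b_k)$ and $\bigwedge_k E_q(a_k, c_k)$, and note the $\LL_P$-formula $\exists \bar y \bigl(\bigwedge_k E_{p_i}(b_k, y_k) \wedge \bigwedge_k E_q(c_k, y_k)\bigr)$ is witnessed by $\bar a$ in $N$; by $M \preceq N$ there is a witness $\bar d \in M$, and then joint invariance of $S_i^{M^+}$ forces $\bar b \in S_i^{M^+} \Leftrightarrow \bar d \in S_i^{M^+} \Leftrightarrow \bar c \in S_i^{M^+}$.

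For (2), the plan is to rewrite every $\LL^+$-formula as an $\LL_P$-formula with parameters in $M$ and then apply $M \preceq N$ in $\LL_P$. For each symbol $S_j$ and each $\alpha \in S_j^+$, pick a representative tuple $\bar r_\alpha \in M^{n_j}$; in both $M^+$ and $N^+$ the atomic formula $S_j(\bar t)$ is then equivalent to the finite disjunction $\bigvee_{\alpha \in S_j^+} \bigwedge_k E_{p_j}(t_k, r_{\alpha,k})$, with agreement across the two expansions guaranteed by the uniqueness part of (1). By induction on formula complexity, every $\LL^+$-formula $\varphi(\bar x)$ becomes an $\LL_P$-formula $\varphi^\ast(\bar x, \bar R)$ over finitely many parameters $\bar R \subseteq M$, with $\varphi \leftrightarrow \varphi^\ast$ holding in any tame expansion of a model of $T_P$ containing $\bar R$. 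For $\bar a \in M^{<\omega}$ this yields $M^+ \models \varphi(\bar a) \Leftrightarrow M \models \varphi^\ast(\bar a, \bar R) \Leftrightarrow N \models \varphi^\ast(\bar a, \bar R) \Leftrightarrow N^+ \models \varphi(\bar a)$, the middle step by $M \preceq N$ in $\LL_P$.

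For (3), the plan is to adapt the proof of Lemma~\ref{shift1} verbatim, noting that $E_{p_i}$-invariance implies $E_P$-invariance. The forward Borel reduction sends $M \models T$ to $(M/E_P, \cc)$, with $\cc([a]_{E_P})$ the size of the class (or $0$ if infinite); each $S_i$ descends naturally to $M/E_P$, and the resulting colored structure lies in $\CC(\Phi_P \wedge T)$ by an application of (2) to the substructure of $E_P$-class representatives inside $M$. The inverse reduction inflates $(N, \cc) \in \CC(\Phi_P \wedge T)$ to $N_\cc$ exactly as in Lemma~\ref{shift1} and extends each $S_i^N$ back to $N_\cc$ via $E_{p_i}$-invariance. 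Both maps are Borel and are inverse on isomorphism types. When $P$ is finite, $\MM_P$ is itself finite and the complete tame-expanded theory $T$ specifies the size of every $E_P$-class of its models, giving a unique countable model up to isomorphism. The main obstacle I expect is the bridging step in the uniqueness half of (1): because two tame extensions can use incomparable invariance levels $p_i, q \in P$, one must realize an existential $\LL_P$-formula over parameters from $M$ inside $M$ in order to force the two saturations to agree.
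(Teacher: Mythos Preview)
Your proof is correct and considerably more detailed than the paper's, which dispatches all three parts in a single sentence: ``(1) is straightforward; for (2), argue that reducts to finite languages are isomorphic; and (3) is exactly like Lemma~\ref{shift1}, using (1) and (2).''

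For (1) and (3) your arguments simply flesh out what the paper leaves implicit, and the route is the same. For (2) there is a mild difference in strategy: the paper's hint points toward restricting to a finite sublanguage $\LL_0\subseteq\LL^+$ (where the resulting theory is $\omega$-categorical, since all new predicates are invariant under a single $E_p$ with finitely many classes) and reading off $M^+\mr{\LL_0}\preceq N^+\mr{\LL_0}$ from that. Your approach instead replaces each $S_j$ by the explicit $\LL_P$-formula-with-parameters $\bigvee_{\alpha}\bigwedge_k E_{p_j}(t_k,r_{\alpha,k})$ and reduces directly to $M\preceq N$ in $\LL_P$. Your route is more concrete and self-contained; the paper's is shorter once one accepts the $\omega$-categoricity of finite reducts. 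Both rely on the same underlying fact, namely that a tame predicate is a finite union of $E_p$-boxes and that every such box is already represented in $M$.

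One small point on the uniqueness half of (1): your bridging argument via the existential $\LL_P$-formula is exactly what is needed and handles the genuinely nontrivial case where the two candidate invariance levels $p_i,q$ are incomparable in $P$; this is a detail the paper's ``straightforward'' elides entirely.
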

\begin{proof}
%YYYY Omit?
    (1) is straightforward;  
    % except for uniqueness, which we prove. Suppose $N^+$ and $N^{\times}$ are two tame expansions of $N$ with $M^+ \subseteq N^+, N^{\times}$. Suppose $S_i(\overline{x})$ is one of the new symbols, say $n_i$-ary. Then $S_i^{N^+}$ is $E_p$-invariant for some $p \in P$, and $S_i^{N^{\times}}$ is $E_q$-invariant for some $q \in P$. Let $\overline{a} \in N^{n_i}$; we show $S_i^{N^+}(\overline{a})$ if and only if $S_i^{N^{\times}}(\overline{a})$. The equivalence relation $(E_q \wedge E_p)^N$ has finitely many classes; since $M \preceq N$ each class must intersect $M$. Choose $\overline{b} \in M^{n_i}$ with each $E_p(a_j, b_j) \land E_q(a_j, b_j)$. Then $N^+ \models S_i(\overline{a})$ if and only if $M^+ \models S_i(\overline{b})$ if and only if $N^{\times} \models S_i(\overline{a})$, as desired.
    for (2), argue that reducts to finite languages are isomorphic; and 
    (3) is exactly like Lemma~\ref{shift1}, using (1) and (2).
\end{proof}
\begin{Theorem} \label{tametheorem} For any unbounded $(P,\le,\delta)\in\PP$, there is a countable $M\preceq \MM_P$ with a tame expansion $M^+$ for which $\CC(M^+)$ is Borel complete.

%hence $Th(M^+)$ is Borel complete as well.
\end{Theorem}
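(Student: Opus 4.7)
The plan is to reduce, via Theorem~\ref{binary} together with Fact~\ref{tameFacts}(3), to the problem of producing a countable $M \preceq \MM_P$ together with a tame expansion $M^+$ of $M$ such that $M^+$ admits cross-cutting absolutely indiscernible sets.  First I would dispose of the easy cases: if $P$ is not narrow, or if $\delta$ is unbounded on some antichain of $P$, then Corollary~\ref{botCor} already yields some countable $M \preceq \MM_P$ with $\CC(M)$ Borel complete, and taking $M^+ := M$ (the trivial tame expansion) completes the proof.  Thus I would reduce to the case where $P$ is narrow and $\delta$ is bounded on every antichain; by Lemma~\ref{Ramsey} and unboundedness, $P$ then has unbounded height, and by Theorem~\ref{omegaChain}, $P$ contains an $\omega$-chain $q_0 < q_1 < q_2 < \cdots$.

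For the main case, the plan is to work in the expanded language $\LL^+ = \LL_P \cup \{R_m : m \in \omega\}$ for a sequence of new binary relation symbols, and build a countable $\LL^+$-structure $M^+$ whose $\LL_P$-reduct $M$ is an elementary substructure of $\MM_P$, along with a doubly indexed family $\{D_{n,m} : (n,m) \in \omega \times \omega\}$ of disjoint nonempty subsets of $M^+$.  Fixing a sufficiently fast-growing sequence $k_0 < k_1 < \cdots$, each $R_m$ would be interpreted as an equivalence relation that is $E_{q_{k_m}}$-invariant (hence tame), with $R_0 \supseteq R_1 \supseteq R_2 \supseteq \cdots$ a proper refining chain.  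Setting $E^1 := \bigcap_m R_m$ (an $\Aut(M^+)$-invariant equivalence relation) and letting $E^0$ be the $\Aut(M^+)$-invariant equivalence relation whose classes contain the rows $D^0_n := \bigcup_m D_{n,m}$, the tuple $(E^0, E^1, \{D^0_n\}, \{D^1_m := \bigcup_n D_{n,m}\})$ should witness that $M^+$ admits cross-cutting absolutely indiscernible sets (Definition~\ref{cross-cutting}) once one arranges that: (i) each $D_{n,m}$ is nonempty (giving clause (1), witnessed by any $c \in D_{n,m}$), and (ii) every pair $(\sigma_0, \sigma_1) \in \Sym(\omega)^2$ lifts to some $\tau \in \Aut(M^+)$ with $\tau[D_{n,m}] = D_{\sigma_0(n), \sigma_1(m)}$ for all $n,m$ (clause (3); clause (2) being immediate from disjointness).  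Once this is in hand, Theorem~\ref{binary} delivers $\CC(M^+)$ Borel complete.

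The hard part will be realizing (i) and (ii) simultaneously via a double back-and-forth.  Each $R_m$ being $E_{q_{k_m}}$-invariant forces $R_m$ to be a coarsening of a finite partition, so the $\omega$-many column classes $D^1_m$ can only arise as intersections along a strictly refining chain; this requires $k_m \to \infty$ and careful choice of which $E_{q_{k_m}}$-subclasses each new element inhabits.  At the same time I must realize two independent $\Sym(\omega)$-actions as automorphisms of $M^+$: one on the row index (inherited from a dense-suitable $\K$-family in the style of Case~2 of Proposition~\ref{checkdensesuitable}) and one on the column index (permuting the nontrivial refinements of each $R_m$-coarsening), while keeping $M$ dense in $\F_P$ and every grid cell $D_{n,m}$ nonempty.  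The plan is to dovetail these four requirements---a tame refining chain, row-action extension, column-action extension, and cross-cut density---into a single amalgamation scheme along the $\omega$-chain $(q_n)$, generalizing Lemma~\ref{quote} to produce a $\K^+$-limit in $\LL^+$ with the full $\Sym(\omega)^2$-action built in.
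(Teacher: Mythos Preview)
Your reduction to the main case (narrow $P$, $\delta$ bounded on antichains, hence an $\omega$-chain exists) matches the paper exactly.  The gap is in the main case: you add only one chain of tame relations $R_m$, which gives you $E^1 = \bigcap_m R_m$, but you never explain what $E^0$ is.  You write ``letting $E^0$ be the $\Aut(M^+)$-invariant equivalence relation whose classes contain the rows $D^0_n$,'' but this does not specify an equivalence relation, and there is no reason one should exist with the required properties.  For $E^0$ to be $\Aut(M^+)$-invariant it must be (infinitarily) definable from $\LL_P \cup \{R_m : m \in \omega\}$; you have not said how the row partition is visible to this language.  Your double back-and-forth would have to build the row structure into $M^+$ somehow, but with only the $R_m$'s available the columns are definable and the rows are not --- the situation is asymmetric, and your outline does not address this.

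The paper's approach avoids this entirely and needs no new amalgamation machinery.  It splits the $\omega$-chain into evens $Q_0 = \{p_{2k}\}$ and odds $Q_1 = \{p_{2k+1}\}$, and adds \emph{two} families of tame binary relations: $F^0_n(x,y)$ asserting $x(p_{2k}) = y(p_{2k})$ for all $2k \le n$, and symmetrically $F^1_n$.  Then $F^i_\omega := \bigwedge_n F^i_n$ holds of $f,g$ iff $f\mr{Q_i} = g\mr{Q_i}$, so both invariant equivalence relations are explicitly definable in $M^+$.  The model $M$ is taken to be a product $\{f : f\mr{Q_0} \in M_{Q_0},\ f\mr{Q_1} \in M_{Q_1},\ f\mr{R} \in D_R\}$ where each $M_{Q_i}$ carries a dense absolutely indiscernible family $\{D^i_n\}$ obtained directly from Theorem~\ref{dichotomy} (since each $Q_i$ is unbounded), and the $\Sym(\omega)^2$-action is produced by Lemma~\ref{AutomorphismPatching} from the two separate $\Sym(\omega)$-actions on $M_{Q_0}$ and $M_{Q_1}$.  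No new back-and-forth is required; all the work was already done in Section~5.
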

%YYYY Deleted the hence statement (see comment above) since it is not necessarily true
\begin{proof}Many of the cases have been handled earlier. If $\delta$ is unbounded on an antichain, or if $P$ is not narrow, then Corollary~\ref{botCor} gives the existence of some countable $M \preceq \MM_P$ with $\CC(M)$ Borel complete, and we are done, so suppose this is not the case. Then $(P,\le)$ must be of unbounded height by Lemma~\ref{Ramsey}. 

Then by Theorem~\ref{omegaChain}, $P$ must admit an $\omega$-chain; fix one and call it $(p_n: n < \omega)$ and put
$Q_0:=\{p_{2k}:k\in\omega\}$, $Q_1:=\{p_{2k+1}:k\in\omega\}$ and $R:=P\setminus (Q_0\cup Q_1)$.  As each $Q_i$ is unbounded,
choose a dense family $\{D^i_n:n\in\omega\}$ of countable, absolutely indiscernible sets from $\F_{Q_i}$.   Choose an arbitrary countable dense subset $D_R\subseteq\F_R$ with a distinguished element $\overline{0}$
 and let $M\preceq \MM_P$ be countable with universe $\{f\in\F_P: f\mr{Q_i}\in \bigcup\{D^i_n:n\in\omega\}$ and $f\mr{R}\in D_R\}$.
%YYYY Deleted ``

Next, we describe a tame expansion $M^+$ of $M$. Let $\LL^+=\LL_P\cup\{F_n^0,F_n^1:n\in\omega\}$ and let $M^+$ be the expansion of $M$ to an $\LL^+$-structure formed by interpreting each $F^0_n(x,y)$ as $\{(f,g)\in M^2:
f(p_{2k})=g(p_{2k})$ for all $k$ such that $2k\le n\}$
and $F^1_n(x,y)$ as $\{(f,g)\in M^2: f(p_{2k+1})=g(p_{2k+1})$ for all $k$ such that $2k+1\le n\}$.
Clearly, $M^+$ is a tame expansion of $M$.  Note also that for any $f,g\in M$, 
$$M^+\models F^0_n(f,g)\quad \leftrightarrow\quad M_{Q_0}\models \bigwedge_{2k\le n} E_{p_{2k}}(f\mr{Q_0},g\mr{Q_0})$$ with an analogous statement for $F^1_n$.  

Additionally, for $i=0,1$, let $F^i_\omega:=\bigwedge_{n\in\omega} F^i_n$.  As each $F^i_n$ is atomically definable in $M^+$,  $F^i_\omega$ is $\Aut(M^+)$-invariant.  Also, the note above implies that for all $f,g\in M$, $$M^+\models F^i_\omega(f,g) \quad\leftrightarrow\quad M_{Q_i}\models \bigwedge_{q\in Q_i} E_q(f\mr{Q_i},g\mr{Q_i}) \quad \leftrightarrow \quad f \mr{Q_i} = g \mr{Q_i}$$

For $i=0,1$ and $n\in\omega$, put $\tilde{D}^i_n:=\{f\in M:f\mr{Q_i}\in D^i_n$ and $f\mr{R}=\overline{0}\}$.  We argue that these families of sets are cross-cutting with respect to $F^0_\omega,F^1_\omega$.  To verify (1), given $a_0\in \tilde{D}^0_n$
and $a_1\in \tilde{D}^1_m$, let $f\in M$ be the unique element satisfying $f\mr{Q_0}=a_0$, $f\mr{Q_1}=a_1$,
and $f\mr{R}=\overline{0}$.  Then $M^+\models F^i_\omega(f,a_i)$ by the note above.

For (2), choose any $f\in\tilde{D}^i_n$ and $g\in\tilde{D}^i_m$ with $M^+\models F^i_\omega(f,g)$.
By the definition of $\tilde{D}^i_n$ and $\tilde{D}^i_m$, $f\mr{Q_i}\in D^i_n$ and $g\mr{Q_i}\in D^i_m$.
But, by the note above we also have $f\mr{Q_i}=g\mr{Q_i}$.  Thus, $m=n$, as required. 

For (3), it suffices to show that any pair of automorphisms $\sigma_0\in\Aut(M_{Q_0})$, $\sigma_1\in\Aut(M_{Q_1})$ lift to an automorphism $\tau\in \Aut(M^+)$.   As in Lemma~\ref{AutomorphismPatching}, define $\tau:M\rightarrow M$ by $\tau(f)\mr{Q_i}=\sigma_i(f\mr{Q_i})$
for $i=0,1$ and $\tau(f)\mr{R}=f\mr{R}$.  Lemma~\ref{AutomorphismPatching} gives that $\tau\in\Aut(M)$, but we need to show that every $F^i_n$ is preserved as well.  But this follows easily from the characterization of
$M^+\models F^i_n(f,g)$ given above.
\end{proof}

\begin{Corollary}\label{dichotomyPt2}
Suppose $P \in \PP$. Then $P$ is bounded if and only if every tame expansion of $T_P$ is not Borel complete.
\end{Corollary}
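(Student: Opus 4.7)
The corollary unpacks into two implications, each of which is immediate given the machinery already built up in the preceding sections. My plan is to prove the contrapositive of each.

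For the direction ``unbounded $\Rightarrow$ some tame expansion of $T_P$ is Borel complete,'' I would apply Theorem~\ref{tametheorem} to obtain a countable $M\preceq\MM_P$ and a tame expansion $M^+$ of $M$ with $\CC(M^+)$ Borel complete. Setting $T := Th(M^+)$, this is a tame expansion of $T_P$ by Definition~\ref{tamedef}. Since $M^+ \models \Phi_P \wedge T$, the class $\CC(M^+)$ sits inside $\CC(\Phi_P\wedge T)$ as a subclass under the ambient isomorphism relation, so Borel completeness of $\CC(M^+)$ passes up to $\CC(\Phi_P\wedge T)$. Because $P$ is unbounded it must be infinite, and so Fact~\ref{tameFacts}(3) yields that $T$ is Borel equivalent to $\CC(\Phi_P\wedge T)$; hence $T$ is Borel complete.

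For the converse direction, I would first dispose of the trivial case where $P$ is finite, in which $\MM_P$ is finite and the claim is vacuous. Assuming $P$ is bounded and infinite, I would fix an arbitrary tame expansion $T$ of $T_P$ and invoke Corollary~\ref{boundedconsequence}(2) to conclude that $\Phi_P$ forbids nested sequences. The remark at the end of Theorem~\ref{topTrivialTheorem} records that this property is preserved under language expansions, and it is manifestly preserved under strengthening the theory; appending a coloring adds only unary predicates, so $\CC(\Phi_P\wedge T)$ forbids nested sequences as well. Theorem~\ref{topTrivialTheorem}(4) then shows that ``countable sets of countable sets of reals'' do not Borel embed into $\CC(\Phi_P\wedge T)$, and in particular this class is not Borel complete. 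Fact~\ref{tameFacts}(3) transfers this negative conclusion to $T$, as desired.

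There is no substantive obstacle here: the corollary amounts to bookkeeping. The one point that merits a moment's check is the transfer of Borel completeness from the single-model coloring class $\CC(M^+)$ up to $\CC(\Phi_P\wedge T)$, which is handled by the trivial inclusion since the isomorphism relations agree, together with the dual preservation of forbidding-nested-sequences from $\Phi_P$ down to the coloring class, immediate by projecting any putative nested sequence to its $\LL_P$-reduct.
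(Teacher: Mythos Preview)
Your proposal is correct and follows essentially the same approach as the paper: both directions invoke Theorem~\ref{tametheorem} together with Fact~\ref{tameFacts}(3) for the unbounded case, and the forbidding of nested sequences plus Theorem~\ref{topTrivialTheorem} for the bounded infinite case, with the finite case dismissed as trivial. The only cosmetic difference is that the paper cites the potential-cardinality bound $\|\CC(\Phi_P\wedge T)\|\le\beth_2$ (clause~(3) of Theorem~\ref{topTrivialTheorem}) to conclude non-Borel-completeness, whereas you cite clause~(4); either suffices.
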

\begin{proof}
If $P$ is unbounded then this is by the preceding theorem and Fact~\ref{tameFacts}(3). 
If $P$ is finite then this is obvious.
If $P$ is bounded and infinite then by Fact~\ref{tameFacts}(3) any tame expansion $T$ is Borel reducible to $\CC(\Phi_P \wedge T)$. The latter forbids nested sequences, since $\Phi_P$ does, so it has potential cardinality at most $\beth_2$. Hence, 
$|\CSS_\ptl(T)|\le \beth_2$, so  $T$ is not Borel complete.
\end{proof}

Pushing onwards:

\begin{Theorem} \label{twolevel}
Suppose $(P,\le,\delta)\in\PP$, $R\subseteq P$ is downward closed and unbounded, and $Q=P\setminus R$.  Suppose that $M^+$ is a tame expansion of some countable $M_Q\preceq \MM_Q$.  Then there is some countable $N\preceq \MM_P$ and a Borel reduction of
$\CC(M^+)$ to $\CC(N)$.
\end{Theorem}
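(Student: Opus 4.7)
The plan is to build a countable $N\preceq \MM_P$ whose $E_R$-equivalence classes each serve as a copy of $M_Q$, then encode a given coloring $\cc$ of $M^+$ together with a fixed encoding of the tame structure of $M^+$ into a coloring of $N$. Concretely, fix a countable dense $D_R\subseteq \F_R$ containing a distinguished element $\bar 0$ together with countably many further ``marker'' elements, and set
$$N:=\{f\in\F_P : f\mr Q\in M_Q,\ f\mr R\in D_R\}.$$
By Fact~\ref{check}(1), $N\preceq \MM_P$. For each $s\in D_R$, the $E_R$-class $C_s:=\{f\in N: f\mr R=s\}$ is in bijection with $M_Q$ via $f\mapsto f\mr Q$, and since $R$ is downward closed, for $f,g\in C_s$ and $p\in Q$ the relation $E_p(f,g)$ in $\MM_P$ reduces to $f\mr Q$ and $g\mr Q$ agreeing on $Q_{\le p}$; so the induced $\LL_Q$-structure on $C_s$ is precisely that of $M_Q$.

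The Borel reduction $F:\CC(M^+)\to \CC(N)$ is defined piecewise by $E_R$-class. Fix disjoint color-segments $A_0,A_1,A_2,\ldots$ partitioning $\omega\setminus\{0\}$, with $A_0$ reserved for the base class $C_{\bar 0}$ and $A_i$ reserved for encoding the tame relation $S_i$. Writing $\LL^+=\LL_Q\cup\{S_i:i\in\omega\}$ with $S_i$ being $E_{p_i}$-invariant, $p_i\in Q$: given $\cc:M_Q\to\omega$, on $C_{\bar 0}$ set $F(\cc)(f):=\iota_0(\cc(f\mr Q))$ for an injection $\iota_0:\omega\to A_0$; on marker classes selected for each $S_i$, assign a fixed coloring (independent of $\cc$, taking values in $A_i$) that records, via the $\LL_P$-linking between base and marker elements, which tuples of $E_{p_i}$-classes in $M_Q$ lie in $S_i$; on all remaining $E_R$-classes set $F(\cc)$ identically $0$. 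Each color depends only on finitely much data, so $F$ is Borel.

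For the forward direction, any isomorphism $\phi:(M^+,\cc_1)\to (M^+,\cc_2)$ lifts to an $\LL_P$-automorphism $\tau$ of $N$ via $\tau(f)\mr Q:=\phi(f\mr Q)$ and $\tau(f)\mr R:=f\mr R$; this is an automorphism by Lemma~\ref{AutomorphismPatching}. Since $\phi$ preserves both $\cc$ and the tame relations of $M^+$, $\tau$ sends $F(\cc_1)$ to $F(\cc_2)$. For the reverse direction, the distinctive colors in $A_0$ on the base class force any color-preserving $\tau:(N,F(\cc_1))\to(N,F(\cc_2))$ to preserve $C_{\bar 0}$ setwise; this yields a bijection $\phi:M_Q\to M_Q$, which is an $\LL_Q$-automorphism because $\tau$ preserves each $E_p$ for $p\in Q$, and satisfies $\cc_2=\cc_1\circ\phi^{-1}$ by color preservation on $A_0$.

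The main obstacle is arguing, in the reverse direction, that $\phi$ also preserves each tame relation $S_i$. The key is to choose marker elements $s\in D_R$ with carefully controlled $R$-projections: by making $s$ and $\bar 0$ agree on $R\cap P_{\le p_i}$, one can arrange that $E_{p_i}$-equivalence between a base element and an $s$-marker element in $N$ is controlled precisely by $E_{p_i}$-equivalence of their $Q$-parts in $M_Q$. With markers so chosen, a fixed coloring on the marker classes can broadcast the $S_i$-data so that any color-preserving $\LL_P$-automorphism of $N$ must induce a permutation of the $E_{p_i}$-classes of $M_Q$ which respects $S_i$, forcing $\phi$ to be an $\LL^+$-automorphism. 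The freedom to engineer such markers simultaneously for the countably many $S_i$ is provided by $R$ being both downward closed and unbounded, which guarantees countably many classes with independently controllable $R\cap P_{\le p_i}$-projections for each $p_i$; verifying the combinatorics of this marker/coloring scheme is the technical heart of the proof.
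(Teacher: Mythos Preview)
Your overall architecture --- build $N$ as a fiber product of $M_Q$ over a dense set in $\F_R$, reserve the base $E_R$-class to carry the input coloring, and use other $E_R$-classes as ``markers'' to encode the tame relations $S_i$ --- matches the paper's.  But the forward direction as you have written it does not go through, and the gap is exactly where the unboundedness of $R$ is really used.

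You take $\tau(f)\mr R := f\mr R$, i.e., you fix every $E_R$-class setwise.  For $\tau$ to preserve your coloring on a marker class $C_s$, that coloring (which you declare to be independent of $\cc$) must therefore be invariant under every $\LL^+$-automorphism $\phi$ of $M^+$ acting on $C_s\cong M_Q$.  So each marker class can only carry $\Aut(M^+)$-invariant unary data.  That is not enough for the reverse direction: take $Q$ so that $E_{p_1}$ has three classes $\alpha,\beta,\gamma$ on which $\Aut_{\LL_Q}(M_Q)$ acts as the full symmetric group, and let $S_1$ be the ``$3$-cycle'' relation $\{(\alpha,\beta),(\beta,\gamma),(\gamma,\alpha)\}$.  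Then $\Aut(M^+)$ acts transitively on the three classes, so there is only one $\Aut(M^+)$-orbit on $M_Q$, and every $\Aut(M^+)$-invariant marker coloring is constant.  The transposition $(\alpha\;\beta)$ is realized by some $\LL_Q$-automorphism $\phi$ of $M_Q$; lifting it by $\tau\mr R=\mathrm{id}$ gives a color-preserving $\LL_P$-automorphism of $N$, yet $\phi$ does not preserve $S_1$.  So your scheme, no matter how the markers are arranged, cannot force $\phi$ to be an $\LL^+$-automorphism.

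The paper resolves this by exploiting the unboundedness of $R$ in a much stronger way than ``many elements with controllable projections'': by Theorem~\ref{dichotomy}, unboundedness yields a countable dense family $\{D_n:n\in\omega\}$ of \emph{absolutely indiscernible} subsets of some $M_R\preceq\MM_R$.  The markers are indexed by positive instances $\rbar\in\Pos(S_i)$, and on the marker class $D_{(i,\rbar)}$ the coloring records the coordinate $k<n_i$ such that $E_{q_i}(f\mr Q,(\rbar)_k)$ holds.  This coloring is \emph{not} $\Aut(M^+)$-invariant on a fixed marker class --- and that is the point.  In the forward direction one does not take $\tau\mr R=\mathrm{id}$; instead, the $\LL^+$-automorphism $\phi$ induces a permutation $\hat h$ of $\Pos(S_i)$, and absolute indiscernibility supplies $\sigma\in\Aut(M_R)$ with $\sigma[D_{(i,\rbar)}]=D_{(i,\hat h(\rbar))}$, so that $\tau=(\phi,\sigma)$ moves marker classes to match.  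Your remark that unboundedness ``guarantees countably many classes with independently controllable $R\cap P_{\le p_i}$-projections'' would hold for any infinite $R$; the genuine role of unboundedness is to supply these automorphisms of $M_R$ permuting the markers.
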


\begin{proof}  Since $R$ is unbounded, apply Theorem~\ref{topTrivialTheorem} to get
a family $\{D_n:n\in\omega\}$ of countable, dense, absolutely indiscernible subsets from some $M_R \preceq \MM_R$.  We can suppose $M_R = \bigcup_n D_n$. 
Let $N:=\{f\in\F:f\mr{R}\in M_R$, $f\mr{Q}\in M^+\}$.
In what follows, we identify $f\in N$ with the pair $(a,b)$, where $a=f\mr{Q}\in M^+$ and $b=f\mr{R}\in M_R$.
%YYYY Minor change to definition of N (now f \mr R \in M_R, rather than in \{D_n: n < \omega\})

The one-line statement of the proof that follows is that we construct a coloring of $N$ that both extends the coloring of $M^+$ and also encodes the interpretations of each (tame) $S_i\in \LL^+\setminus \LL_Q$.  To do this, we set some notation.  Write $\LL^+\setminus \LL_Q=\{S_i:i\in I\}$ and, for each $i$, let $n_i$ be the arity of $S_i$ and choose $q_i\in Q$ such that $S_i$ is $E_{q_i}$-invariant in $M^+$. First, by replacing each $S_i$ by a boolean combination, we may assume that each $S_i$ is $E_{q_i}$-irreflexive,
i.e., $M^+\models S_i(\xbar)\vdash \bigwedge_{j\neq k} \lnot E_{q_i}(x_j, x_k)$.

For each $q\in Q$, choose a (finite) set $R_q\subseteq M^+$ of representatives of the $E_q$-classes 
in $M^+$.  
For each $i\in I$,  let
$$\Pos(S_i)=\{\rbar\in R_{q_i}^{n_i}:M^+\models S_i(\rbar)\}$$
As $R_{q_i}$ is finite, so is $\Pos(S_i)$,
and since each $S_i$ is $E_{q_i}$-irreflexive,
every $\rbar\in \Pos(S_i)$ is an $n_i$-tuple of distinct elements.  
Let $$J=\{*\}\cup\{(i,\rbar): i\in I, \rbar\in \Pos(S_i)\}$$
Let $\{D_j:j\in J\}$ be a relabelling of the family of absolutely indiscernible sets $(D_n: n < \omega)$ described above.  
Choose a partition $\omega=X\sqcup Y$ 
into two infinite sets and choose disjoint sets $Y_{i}\subseteq Y$ for $i \in I$,
each of size $n_i+1$.
Enumerate $Y_i = \{m_{i, k}: k \leq n_i\}$. Let $s: \omega \to X$ be a bijection.

Before defining our Borel reduction, we introduce a partial coloring on a subset of $N$.  For all $f=(a,b)\in N$ with $b\in D_{i,\rbar}$, put
$$c^*(a,b)=\begin{cases}  m_{i,k} & \text{if $M^+\models E_{q_i}(a,(\rbar)_k)$} \\
m_{i,n_i} & \text{if $M^+\models \bigwedge_{k<n_i} \neg E_{q_i}(a,(\rbar)_k)$}\end{cases}$$

Note that since $M^+\models S_i(\rbar)$ and $S_i$ is $E_{q_i}$-irreflexive we cannot have
$a$ $E_{q_i}$-equivalent to $(\rbar)_k$ and $(\rbar)_{k'}$ for distinct $k,k'$.  

Now define $F:\CC(M^+)\rightarrow \CC(N)$
as $F(M^+,c) = (M^+, F(c))$ where $F(c)$ is the coloring on $N$ given by:
$$F(c)(f)=F(c)(a,b)=\begin{cases}
c^*(a,b), & \text{if $b\not\in D_*$} \\
s(c(a)), & \text{if $b\in D_*$} \end{cases}$$

The mapping $F$ is Borel, so to see it is a reduction, first choose $(M^+, c_1)$ and $(M^+, c_2)$ in $\CC(M^+)$ and choose an $\LL^+(c)$-isomorphism $h:(M^+,c_1)\rightarrow (M^+,c_2)$.

We construct an $\LL_P(c)$-isomorphism
$\tau:(N,F(c_1))\rightarrow (N,F(c_2))$ as follows:
For each $i\in I$ and $\rbar\in\Pos(S_i)$, there is a unique
$\rbar'\in\Pos(S_i)$ with $M^+\models E_{q_i}(h(\rbar)_k, (\rbar')_k)$ for each $k<n_i$.
Let $\hhat:\Pos(S_i)\rightarrow \Pos(S_i)$ denote this map. $\hhat$ is a bijection, since $h$ permutes the $E_{q_i}$-classes.

Let $\pi\in\Sym(J)$ be defined by $\pi(*)=*$ and  $\pi(i,\rbar)=(i,\hhat(\rbar))$.
As $\{D_j:j\in J\}$ are a family of absolutely indiscernible sets, choose $\sigma\in \Aut(M_R)$ such that $\sigma[D_j]=D_{\pi(j)}$ for each $j\in J$.  Finally, let
$\tau:N\rightarrow N$ be defined as $\tau\mr{Q}=h$ and $\tau\mr{R}=\sigma$.  
Then $\tau\in\Aut(N)$ by Lemma~\ref{AutomorphismPatching}.  To see that $\tau$ also preserves colors, there are two cases.  On one hand,
if $f=(a,b)$ with $b\not\in D_*$ then
$F(c_1)(a,b)=c^*(a,b)$ and, by our choice of $\sigma$, $F(c_2)(h(a),\sigma(b))=c^*(h(a),\sigma(b))$.
If $b\in D_{i,\rbar}$ then $\sigma(b)\in D_{i,\hhat(\rbar)}$.  So for any $k<n_i$,
$c^*(a,b)=m_{i,k}$ iff $M^+\models E_{q_i}(a,(\rbar)_k))$ and
$c^*(h(a),\sigma(b))=m_{i,k}$ iff
$M^+ \models E_{q_i}(h(a),(\hhat(\rbar))_k)$, which is the case iff $M^+ \models E_{q_i}(h(a), (h(\rbar))_k)$.  Since $h$ preserves $E_{q_i}$, we get that $F(c_1)(f) = F(c_2)(\tau(f))$ whenever $f\mr{R}\not\in D_*$.
On the other hand, if $f=(a,b)$ and $b\in D_*$, then $\sigma(b)\in D_*$ as well.  Thus, $F(c_1)(f)=s(c_1)(a)$ and
$F(c_2)(\tau(f))=s(c_2(h(a))$.  But as $h$ preserves colors, we have $c_1(a)=c_2(h(a))$ so we are done in this case as well.

The converse is more interesting. We need to show that the extra relations $S_i$ are recoverable from $(N,\cc)$.  
We prove a preparatory claim. Recall that $E_R$ is the conjunction $\bigwedge_{p \in R} E_p$, hence is $\LL_P$-invariant.  
In our notation,  $(a, b) E_R (a', b')$ if and only if $b = b'$.

Fix any $i\in I$ and let  $X_i$ be the set of all $(f_j: j < n_i) \in N^{n_i}$ all from the same $E_R$-class, such that, writing $f_j = (a_j, b)$, we have that $M^+ \models S_i(\overline{a})$. 

\medskip
\noindent \textbf{Claim.} Suppose $i \in I$, and $(f_j: j < n_i) \in N^{n_i}$ is a tuple from a single $E_R$-class, say $f_j = (a_j, b)$. Then the following are equivalent:

\begin{enumerate}
    \item $\overline{f} \in X_i$;
    \item There exist $(g_j: j < n_i)\in N^{n_i}$ from a single $E_R$-class, such that for each $j$, $N \models E_{q_i}(f_j, g_j)$, and such that each $c^*(g_j)$ is defined and equal to $m_{i, j}$.
\end{enumerate}
\begin{proof}
First suppose $\overline{f} \in X_i$, i.e. $M^+ \models S_i(\overline{a})$. Let $\overline{r} \in \mbox{Pos}(S_i)$ be the unique tuple with each $E_{q_i}(a_j, (\overline{r})_j)$. Since $D_{i, \overline{r}}$ is dense we can find $b' \in D_{i, \overline{r}}$ such that $E_q(b', b)$ for all $q \in R$ with $q < q_i$. Let $g_j = (a_j, b')$. These are visibly all $E_R$-related, and by the definition of $c^*$ we have each $c^*(g_j) = m_{i, j}$. By Lemma~\ref{shift2} we have that for each $j$, $N \models E_{q_i}(f_j, g_j)$, so we get the first implication.

For the reverse implication, suppose $(g_j: j < n_i)$ are given. Write $g_j = (b_j, b')$ for some fixed $b' \in M_R$ ($b'$ exists since the $g_j$'s are all $E_R$-related). Since $c^*(g_j) = m_{i, j}$, we must have $b' \in D_{i, \overline{r}}$ for some $\overline{r} \in \mbox{Pos}(S_i)$, and then we must have each $M^+ \models E_{q_i}(b_j, (\overline{r})_j)$. But since also $M^+ \models E_{q_i}(b_j, a_j)$ (since $N \models E_{q_i}(g_j, f_j)$), we get that $M^+ \models E_{q_i}(a_j, (\overline{r})_j)$. Since $S_i$ is $q_i$-invariant,  $M^+ \models S_i(\overline{a})$ as well.
\end{proof}

Now suppose $(M^+, c_1)$ and $(M^+, c_2)$ are in $\CC(M^+)$ and $h:(N,F(c_1))\cong (N,F(c_2))$ is an $\LL_P(c)$-isomorphism. We aim to find an $\LL^+(c)$-isomorphism between $(M^+, c_1)$ and $(M^+, c_2)$. Recall that $M^+$ is a tame expansion of $M_Q \preceq \MM_Q$.

By Lemma~\ref{welldefined}, $h$ induces an $\LL_R$-automorphism $h_R: M_R \cong M_R$. Now fix $b^*\in D_*$.  Define $\hhat:M^+\rightarrow M^+$ as $\hhat(a)=h((a,b^*))\mr{Q}$.
Unpacking the definitions we have
$$h((a,b^*))=(\hhat(a),h_R(b^*)) \ \hbox{for every $a\in M_Q$}$$
It is easily shown that $\hhat\in \Aut_{\LL_Q}(M_Q)$ and that $\hhat$ is color-preserving, so
it remains to show that $\hhat$ preserves every $S_i\in \LL^+\setminus \LL$. 

Suppose $(a_j: j < n_i) \in (M^+)^{n_i}$. Put $f_j := (a_j, b^*)$ and $g_j := h(f_j)\! =\! (\hhat(a_j), h_R(b^*))$. It suffices to show that $\overline{f} \in X_i$ if and only if $h(\overline{f}) \in X_i$, but this follows from the claim, since $h$ is an $\LL_P(c)$-isomorphism.
\end{proof}

% \begin{Corollary}  Suppose $(P,\le,\delta)\in\PP$, $R\subseteq P$ is downward closed and unbounded, and also $Q=P\setminus R$ is unbounded.  Then $\C(N)$ is Borel complete for some countable $N\preceq M^*$, hence $T(P,\le,\delta)$ is Borel complete. 
% \end{Corollary}

% \begin{proof}  By Theorem~\ref{tametheorem}, choose a countable $M\preceq M^*_Q$
% and a tame expansion $M^+$ of $M$ with $\CC(M^+)$ Borel complete.  Then, by Theorem~\ref{twolevel} choose a countable $N\preceq M^*$ for which there is a Borel
% reduction from $\CC(M^+)$ to $\CC(N)$.  That this implies $T(P,\le,\delta)$ is Borel complete follows from Lemma~\ref{shift1}.

% \end{proof}

% \begin{Definition}
% {\em Suppose $(P, \leq, \delta) \in \PP$. Then say that $P$ is {bounded-minimal} if for $P$ is unbounded, and for all downwards-closed $Q \subseteq P$, either $Q$ is bounded or else $P \backslash Q$ is bounded.
% }
% \end{Definition}

% In practice it is useful to give an alternative formulation.

% \begin{Lemma}
%     Suppose $(P, \leq, \delta) \in \PP$. Then $P$ is bounded-minimal if and only if $\delta$ is bounded on every antichain of $P$, and $P$ is height-minimal.
% \end{Lemma}
% \begin{proof}
% Clearly, if $P$ is not height-minimal, then $P$ is not bounded-minimal. Also, if $\delta$ is unbounded on an antichain $A$, then we can write $A = A_0 \sqcup A_1$ so that $\delta$ is unbounded on each $A_i$. Then $\dc(A_0)$ witnesses that $P$ is not bounded-minimal.    

% Conversely, if $\delta$ is bounded on every antichain, then height-minimality and bounded-minimality coincide by Lemma~\ref{Ramsey}.
% \end{proof}
Recall the definition of $P$ minimally unbounded  from 
Definition~\ref{taxonomy}.

% \begin{Definition}
%     Suppose $(P, \leq, \delta) \in \PP$. Then $P$ is minimally unbounded if $P$ is unbounded, but for all downward closed $Q \subseteq P$, either $Q$ or $P \backslash Q$ is bounded.
% \end{Definition}

\begin{Corollary}\label{FirstHalf}  Suppose $(P,\le,\delta)\in\PP$, and $P$ is unbounded but not minimally unbounded. Then $\C(N)$ is Borel complete for some countable $N\preceq \MM_P$, hence $T_P$ is Borel complete. 
\end{Corollary}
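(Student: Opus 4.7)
The plan is to combine Theorem~\ref{tametheorem} (applied to a piece of $P$) with Theorem~\ref{twolevel} (applied to the complementary piece), producing a Borel reduction from a Borel complete class into some $\CC(N)$.

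Since $P$ is unbounded but not minimally unbounded, unpacking Definition~\ref{taxonomy} gives a downward closed $R\subseteq P$ such that both $R$ and $Q := P\setminus R$ are unbounded. Observe that $(Q,\le,\delta\mr{Q})\in \PP$, since $Q$ inherits down-finiteness from $P$, and similarly for $R$. In particular both triples are legitimate inputs to the machinery of the previous sections.

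First, I would apply Theorem~\ref{tametheorem} to $(Q,\le,\delta\mr{Q})$, which is unbounded. This yields a countable $M_Q\preceq \MM_Q$ together with a tame expansion $M^+$ of $M_Q$ (in the language $\LL_Q$ augmented by some tame relation symbols) such that $\CC(M^+)$ is Borel complete. Next, since $R$ is downward closed and unbounded and $Q = P\setminus R$, Theorem~\ref{twolevel} applies to $M^+$ and produces a countable $N\preceq \MM_P$ together with a Borel reduction $\CC(M^+) \le_B \CC(N)$.

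Chaining these, $\CC(N)$ is Borel complete. For the final clause, since $N\preceq \MM_P$ we have $N\models T_P$, so $\CC(N)\subseteq \CC(\Phi_P)$, hence $\CC(\Phi_P)$ is Borel complete. Then Lemma~\ref{shift1} gives that $T_P$ itself is Borel complete. The whole argument is a clean two-step combination of the previously established theorems; there is no serious obstacle, the only thing to verify is that the hypotheses match up, namely that $R$ is downward closed and unbounded (by choice) and $Q$ is unbounded (so that Theorem~\ref{tametheorem} produces a Borel complete $\CC(M^+)$), both of which are immediate from the hypothesis that $P$ is not minimally unbounded.
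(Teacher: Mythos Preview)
Your proposal is correct and follows exactly the same approach as the paper: choose a downward closed $R$ with both $R$ and $Q=P\setminus R$ unbounded, apply Theorem~\ref{tametheorem} to $Q$ to get a Borel complete $\CC(M^+)$, then apply Theorem~\ref{twolevel} to transfer this to some $\CC(N)$ with $N\preceq\MM_P$, and finish with Lemma~\ref{shift1}. The only additional detail you spell out is the inclusion $\CC(N)\subseteq\CC(\Phi_P)$, which is fine.
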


\begin{proof}Choose $R \subseteq P$ downward closed so that both  $R$ and $P \backslash R$ are unbounded. Let $Q = P \backslash R$.  By Theorem~\ref{tametheorem}, choose a countable $M\preceq \MM_Q$
and a tame expansion $M^+$ of $M$ with $\CC(M^+)$ Borel complete.  Then, by Theorem~\ref{twolevel} choose a countable $N\preceq \MM_P$ for which there is a Borel
reduction from $\CC(M^+)$ to $\CC(N)$.  That this implies $T_P$ is Borel complete follows from Lemma~\ref{shift1}.

\end{proof}

\section{$(P,\le,\delta)$ minimally unbounded}

We have, at this point, established Theorems~\ref{Intro1} and ~\ref{Intro2} from the Introduction. Theorem~\ref{Intro1} follows from Corollary~\ref{dichotomyPt2}; Theorem~\ref{Intro2} is exactly Corollary~\ref{FirstHalf}. 
The paradigm of a minimally unbounded $(P,\le,\delta)$
is $REF(\delta)$,  where $(P,\le)$ is itself an $\omega$-chain.  In this case, with Theorem~5.5 of \cite{URL},
$REF(\delta)$ is not Borel complete.  However, a typical minimally unbounded instance may have many additional $E_q$'s.  Here, under suitable large cardinal hypotheses,
we  show that if $(P, \leq, \delta)$ is minimally unbounded, then  $T_{P}$ is not Borel complete.

\begin{Fact}
    Suppose $(P, \leq, \delta) \in \PP$ is minimally unbounded. Then:

    \begin{enumerate}
        \item $\delta$ is bounded on every antichain (thus for all $Q \subseteq P$, $Q$ is bounded if and only if it is of bounded height).
        \item $P$ is narrow.
        \item $P$ admits an $\omega$-chain.
    \end{enumerate}
\end{Fact}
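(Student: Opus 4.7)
My plan is to prove the three items in the order stated, using the definition of minimal unboundedness (namely, that for every downward closed $Q \subseteq P$, either $Q$ or $P \setminus Q$ is bounded) to rule out certain configurations by exhibiting a ``splitting'' downward closed set.

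For (1), I will argue by contradiction: suppose some antichain $A \subseteq P$ has $\delta$ unbounded on it. Choose a subsequence $(a_n : n \in \omega)$ from $A$ with $\delta(a_{n+1}) > \delta(a_n)$ and split it into $A_0 = \{a_{2k} : k \in \omega\}$ and $A_1 = \{a_{2k+1} : k \in \omega\}$, so $\delta$ is still unbounded on each. Let $Q = \dc(A_0)$. Then $Q$ is downward closed, contains $A_0$, hence is unbounded. For the other side, if some $a \in A_1$ lay in $Q$, then $a \leq a'$ for some $a' \in A_0$; but $A_0 \cup A_1 \subseteq A$ is an antichain, so $a = a'$, contradicting disjointness. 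Thus $A_1 \subseteq P \setminus Q$, so $P \setminus Q$ is unbounded as well, contradicting minimal unboundedness. The parenthetical follows immediately from Lemma~\ref{Ramsey}.

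For (2), suppose for contradiction that orthogonal $Q \bot R$ are both unbounded. By (1) plus Lemma~\ref{Ramsey}, $Q$ and $R$ each have unbounded height. Set $S = \dc(Q)$; this is downward closed and contains $Q$, so $S$ is of unbounded height and hence unbounded. Using orthogonality, $R \cap S = \emptyset$: any $r \in R \cap \dc(Q)$ would satisfy $r \leq q$ for some $q \in Q$, but then $r$ and $q$ would be comparable, contradicting $Q \bot R$. Hence $R \subseteq P \setminus S$, so $P \setminus S$ is of unbounded height and therefore unbounded (again by Lemma~\ref{Ramsey}). This contradicts minimal unboundedness.

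For (3), by (1) combined with Lemma~\ref{Ramsey}, a subset of $P$ is bounded if and only if it is of bounded height; in particular, since $P$ itself is unbounded (by definition of minimally unbounded), $P$ has unbounded height. By (2), $P$ is narrow. Since $P$ is also down-finite (as part of being in $\PP$), Theorem~\ref{omegaChain} applies and yields an $\omega$-chain in $P$.

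The only step that required more than routine unpacking was (1); once the antichain-splitting trick is in place, (2) and (3) are short consequences of (1), Lemma~\ref{Ramsey}, and Theorem~\ref{omegaChain}.
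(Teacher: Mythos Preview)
Your argument is correct and follows essentially the same route as the paper: in each of (1) and (2) you produce a pair of orthogonal unbounded subsets and then take the downward closure of one of them to violate minimal unboundedness, and (3) follows from Theorem~\ref{omegaChain}. The paper compresses (1) and (2) into a single line by pointing to the proof of Corollary~\ref{botCor}, whereas you spell out the antichain-splitting and the $\dc(Q)$ trick directly; the content is the same.

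One small wording point in (2): the negation of ``narrow'' gives orthogonal $Q \bot R$ both of \emph{unbounded height}, not ``unbounded'' in the sense of Definition~\ref{taxonomy}. Of course unbounded height trivially implies unbounded, so your starting hypothesis follows; but then your sentence ``By (1) plus Lemma~\ref{Ramsey}, $Q$ and $R$ each have unbounded height'' is recovering something you already had. It is harmless, but you could streamline by starting from $Q,R$ of unbounded height and skipping that sentence.
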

\begin{proof}
(1), (2): if they failed, then by the proof of Corollary~\ref{botCor} we would have some orthogonal, unbounded $Q_0, Q_1 \subseteq P$. But then $\dc(Q_0)$ would contradict that $P$ is minimally unbounded.

(3) $P$ is of unbounded height by (1), so this follows from Theorem~\ref{omegaChain} and (2).
\end{proof}

Now fix some minimally unbounded $(P, \leq, \delta)$ and fix an $\omega$-chain $(p_n: n < \omega)$. Let $R$ be the downward closure of this $\omega$-chain. For each $n\in\omega$, let $Q_n=\{q\in P:q\not > p_n\}$.
 
 The following Facts are easily verified from our assumption on $(P,\le,\delta)$.

 \begin{Fact} \label{Qnfacts}
 \begin{enumerate}
 \item  Each $Q_n$ is downward closed and bounded.
 \item  For each $n$, $Q_n\subseteq Q_{n+1}$ and $P=\bigcup_{n\in\omega} Q_n$.
 % \item  For each $n$, $p_n\in Q_n$ and
 % for any $a,b\in \MM_P$, if $\MM_P\models \neg E_{p_n}(a,b)$ then $\MM_P\models \neg E_{r}(a,b)$ for every $r\in P\setminus Q_n$.
 \end{enumerate}
     
 \end{Fact}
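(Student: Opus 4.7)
The proof should be a short direct verification, so my plan is simply to unpack each claim against the definitions and use the down-finiteness of $P$ together with minimal unboundedness.

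For (1), I would first show $Q_n$ is downward closed by contrapositive: if some $q' \leq q$ with $q \in Q_n$ had $q' > p_n$, then transitivity forces $p_n < q$, contradicting $q \in Q_n$. For boundedness, the key observation is that $P \setminus Q_n = \{q \in P : q > p_n\}$ contains the infinite tail $(p_m : m > n)$ as an $\omega$-chain, so $P \setminus Q_n$ has unbounded height and hence is unbounded. Since $Q_n$ is downward closed and $P$ is minimally unbounded, the alternative in Definition~\ref{taxonomy} forces $Q_n$ itself to be bounded.

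For (2), the inclusion $Q_n \subseteq Q_{n+1}$ follows from $p_n < p_{n+1}$: any $q > p_{n+1}$ would satisfy $q > p_n$ by transitivity, so $q \notin Q_n$ implies $q \notin Q_{n+1}$ as well (contrapositive gives the inclusion). For $P = \bigcup_n Q_n$, suppose some $q \in P$ belonged to no $Q_n$, i.e., $q > p_n$ for every $n$. Then $\{p_n : n < \omega\} \subseteq P_{\leq q}$, contradicting the assumption that $P$ is down-finite.

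There is no real obstacle here; the only substantive input is the minimal unboundedness hypothesis, which is invoked exactly once (to pass from "$P \setminus Q_n$ unbounded" to "$Q_n$ bounded"). Everything else is direct from the definitions of $Q_n$, $R$, and down-finiteness.
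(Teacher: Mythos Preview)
Your proposal is correct and follows essentially the same approach as the paper's proof: downward closedness by transitivity, boundedness of $Q_n$ via minimal unboundedness applied to the downward closed $Q_n$ (using that $P\setminus Q_n$ contains the tail $\{p_m:m>n\}$ and is therefore unbounded), the inclusion $Q_n\subseteq Q_{n+1}$ by contrapositive, and $P=\bigcup_n Q_n$ by down-finiteness. One small slip: in (2) you wrote ``$q\notin Q_n$ implies $q\notin Q_{n+1}$'' where you meant the reverse implication (which is what your preceding sentence actually establishes, and whose contrapositive is the desired inclusion).
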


 \begin{proof}    (1)  Say $q\in Q_n$ and $q'\le q$.  If $q'\not\in Q_n$, then $q'>p_n$, hence $q> p_n$, contradicting $q\in Q_n$.    As $\{p_k:k\ge n+1\}\cap Q_n=\emptyset$, our hypothesis on $(P,\le,\delta)$ implies that $Q_n$ is bounded.

 (2)  Choose any $q\not\in Q_{n+1}$.  Then
 $q>p_{n+1}$, hence $q>p_n$, implying $q\not\in Q_n$.  Thus $Q_n\subseteq Q_{n+1}$.
 For the other half, by way of contradiction suppose there were some $r\in P\setminus \bigcup_{n\in\omega} Q_n$.  Then the infinite set $\{p_n:n\in\omega\}$ would be in $P_{\le r}$, contradicting our definition of $\PP$.

 % (3)  That $p_n\in Q_n$ is obvious.  
 % Suppose $\MM_P\models \neg E_{p_n}(a,b)$.
 % Then by definition, $a(q)\neq b(q)$ for some $q\le p$.
 % Choose any $r\in P\setminus Q_n$.  Then 
 % $q\le p_n < r$, so $\MM_P\models \neg E_r(a,b)$ as well.  
 \end{proof}

Our first goal is to prove the following `Schr\"{o}der-Bernstein' property for $\CC(\Phi)$ with respect to the notion of 1-embeddings defined in Definition~\ref{SBembed}.

% \begin{Definition}  {\em  Suppose $M,N\preceq \MM_P$.    
% A {\em star map} is an $\LL_P$-embedding  $f:M\preceq_* N$ such that, for all $a \in M$, $f$ maps the $E_R$-class of $a$ isomorphically onto the $E_R$-class of $b$.
%
% Suppose $(M,\cc)$ and $(N,\dd)$ are additionally colored.  A {\em colored 
% star map} $f: (M, \cc) \preceq_* (N, \dd)$ is a color-preserving star map $f:M\preceq_* N$, i.e., $\cc(a)=\dd(f(a))$ for every $a\in M$.  
% }
% \end{Definition}
%
% The rest of this section is devoted to a proof of the following theorem; it is a sort of Schr\"{o}der-Bernstein result. We will see in the next section that this has consequences for the Borel complexity of $T(P, \leq, \delta)$, using that $P \backslash R$ is bounded. 

 \begin{Theorem} \label{SB} Suppose $M,N\preceq \MM_P$ are countable.  Then for any colorings $(M,\cc), (N,\dd)$ of $M,N$, respectively, if there are  $f:(M, \cc)\preceq^*_1(N, \dd)$ and $g:(N, \dd) \preceq^*_1(M, \cc)$, then there is an $\LL(\cc)$-isomorphism $h:(M,\cc)\cong (N,\dd)$.
 \end{Theorem}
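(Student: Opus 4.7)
My plan is to reduce the theorem to a Schr\"{o}der--Bernstein statement on the quotient $[M]_R$, where $R$ is the downward closure of the fixed $\omega$-chain $(p_n)$ in $P$. Since $(P,\le,\delta)$ is minimally unbounded and $R$ is unbounded (it contains the $\omega$-chain), the complement $P\setminus R$ must be bounded; hence Lemma~\ref{ClassesForbidNested} applies and every $E_R$-class of $M$ or $N$, viewed as a substructure equipped with its induced coloring, forbids nested sequences. Because $E_R$ is type-definable and hence preserved by $f$ and $g$, each of $f,g$ restricts to an $\LL_P(\cc)$-1-embedding between corresponding $E_R$-classes, and Proposition~\ref{useforbid} then promotes each such restriction to a color-preserving $\LL_P$-isomorphism between those $E_R$-classes.

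Next, by Lemma~\ref{welldefined} the induced maps $[f]_R:[M]_R\to[N]_R$ and $[g]_R:[N]_R\to[M]_R$ are $\LL_R$-embeddings; a short check shows they are in fact $\LL_R$-1-embeddings, since any $\LL_P$-qf-type witness produced by $f$ projects to an $\LL_R$-qf-type witness on the quotient. I would then enrich each quotient by a coloring $\cc^*$ (respectively $\dd^*$) that records the $\LL_P(\cc)$-isomorphism type of each $E_R$-class; by the previous paragraph $[f]_R$ and $[g]_R$ preserve these refined colorings. Any $\LL_R(\cc^*)$-isomorphism between the enriched quotients lifts, class by class, to a color-preserving $\LL_P$-isomorphism between $M$ and $N$, so the theorem reduces to an SB statement for $\CC(\Phi_R)$ with these enriched colorings.

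The main obstacle is precisely this SB statement on the quotient. To attack it I would exploit the fact that, since $R$ is the downward closure of an $\omega$-chain, it admits the filtration $R=\bigcup_n R_n$ with $R_n:=R_{\le p_n}$ finite and downward closed. For each $n$, the quotient $[M]_{R_n}$ is then canonically identified with the finite structure $\MM_{R_n}=[N]_{R_n}$, and the further maps $[f]_{R_n},[g]_{R_n}$ become mere permutations of $\MM_{R_n}$. This presents $([M]_R,\cc^*)$ and $([N]_R,\dd^*)$ as countable colored trees whose level-$n$ nodes are the elements of $\MM_{R_n}$, with each node at level $n$ splitting into its refining children at level $n+1$. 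I would then construct the desired isomorphism by a level-by-level back-and-forth along this tree: at stage $n+1$, the 1-embedding properties of $[f]_R$ and $[g]_R$ furnish mutual injections between the $\cc^*$-labelled multisets of children refining each already-matched level-$n$ node, and a countable Cantor--Bernstein argument for multisets forces these multisets to agree, enabling the extension of the matching.

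The subtle point, which I expect to be the main combinatorial difficulty, is ensuring that these level-by-level matchings are made coherently both across all branches of the tree and across all $n<\omega$ simultaneously, so that their union defines a genuine $\LL_R(\cc^*)$-isomorphism on the inverse limit rather than just a family of level bijections. I would handle this by organizing the back-and-forth as a tree construction where at stage $n$ we fix the matching on a finite initial part of the tree and use the 1-embedding hypotheses on $[f]_R,[g]_R$ over that finite tuple to guarantee compatible extensions; the forbidding of nested sequences on $E_R$-classes ensures that the lifting to $M,N$ in Paragraph~1 does not introduce any fresh obstructions.
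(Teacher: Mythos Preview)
Your reduction has a genuine gap at the lifting step. You assert that ``any $\LL_R(\cc^*)$-isomorphism between the enriched quotients lifts, class by class, to a color-preserving $\LL_P$-isomorphism between $M$ and $N$,'' but this is false in general: for $p\in P\setminus R$, the relation $E_p$ can hold between elements lying in \emph{distinct} $E_R$-classes, and an uncoordinated choice of the class isomorphisms $h_\alpha$ will not preserve such $E_p$. Concretely, take $P=\{p_n:n<\omega\}\cup\{s\}$ with $(p_n)$ an $\omega$-chain, $s$ incomparable to every $p_n$, and $\delta\equiv 2$; this is minimally unbounded. Here $R=\{p_n:n<\omega\}$, each $E_R$-class has at most two elements, and $E_s$ has two classes that cut across \emph{all} $E_R$-classes. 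If $h^*$ is the identity on $[M]_R$ and you pick the swap on one two-element class $\alpha$ and the identity on another two-element class $\beta$, the glued map destroys $E_s$ between $\alpha$ and $\beta$. Your coloring $\cc^*$ records only the internal $\LL_P(\cc)$-type of each class, so it cannot see this cross-class structure. (The same phenomenon occurs in the paper's own example $P=\{p_n\}\cup\{q_{n,m}\}$, via any $E_{q_{n,m}}$.)

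The paper avoids this by never passing to the quotient $[M]_R$. Instead it filters by the downward-closed bounded sets $Q_n=\{q:q\not>p_n\}$, which satisfy $\bigcup_n Q_n=P$ and hence eventually capture every $E_p$. The back-and-forth system is built directly on $M,N$: a partial match $\bar a\sim\bar b$ is witnessed by a family of (possibly different) $1$-embeddings $f_i$, one per coordinate, subject to the coherence condition that $[f_i]_{Q_n}=[f_j]_{Q_n}$ whenever $a_i,a_j$ are $E_{p_m}$-related for all $m<n$. Since each $[f_i]_{Q_n}$ is an honest isomorphism of the finite-exponent structure $[M]_{Q_n}$ (Lemmas~\ref{Stray} and~\ref{Stray2}), this coherence is exactly what makes every $E_p$ preserved, while still leaving enough freedom to extend.

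A secondary concern: your claim that $[f]_R$ is an $\LL_R(\cc^*)$-$1$-embedding is not justified. The $1$-embedding property of $f$ matches quantifier-free types of \emph{finite} tuples, but $\cc^*(\beta)$ encodes the isomorphism type of the possibly infinite set $\beta$; matching a single representative $b\in\beta$ over $\bar a$ gives you an $a^*$ with the correct $\LL_R$-type over $\bar\alpha$, but no reason why $[a^*]_{E_R}\cong\beta$ as colored $\LL_P$-structures.
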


 Fix $(M,\cc)$ and $(N,\dd)$ as in Theorem~\ref{SB}. So there exist $1$-embeddings $f: (M, \cc) \preceq^*_1 (N, \dd)$ and $g: (N, \dd) \preceq^*_1 (M, \dd)$, but we do not fix choices of $f$ and $g$.
 % For each $n$ let $F_n$ be the equivalence relation defined as $\bigwedge_{m < n} E_{p_m}$; so $F_0$ is the indiscrete equivalence relation with only one class, and $F_{n+1} = E_{p_n}$.
 % Note that $E_R = \bigwedge_{n < \omega} F_n$, since $R$ is the downward closure of the $p_n$'s.

 %****I put this back because it really does simplify matters.
We shall need the following Lemma.

\begin{Lemma}\label{Stray}
    Suppose $K \models \Phi_{Q_n}$ and suppose $f: K \to K$ is an elementary embedding. Then there is some $k > 0$ such that $f^k = \mbox{id}$ (in particular $f$ is an automorphism).
\end{Lemma}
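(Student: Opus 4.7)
The plan is to mimic the proofs of Lemma~\ref{boundeddelta} and Corollary~\ref{boundedconsequence}, showing directly that $f^{(m!)^k}=\mathrm{id}_K$, where $k=\hgt(Q_n)<\omega$ and $m$ bounds $\delta$ on $Q_n$; both are finite since $Q_n$ is bounded by Fact~\ref{Qnfacts}(1). Once this is established, $f^{(m!)^k-1}$ serves as an inverse for $f$, so $f$ is automatically surjective, which is the parenthetical claim.

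First, using Fact~\ref{check}(3) with $A=\emptyset$ I would replace $K$ by an isomorphic copy sitting inside $\MM_{Q_n}$, so that elements of $K$ may be viewed as functions $Q_n\to\omega$. Then by induction on $j\le k$ I would prove the following, in close analogy to Lemma~\ref{boundeddelta}: for every $a\in K$ and every $q\in Q_n$ with $\hgt(q)\le j$, $f^{(m!)^j}(a)(q)=a(q)$. The inductive step sets $\psi:=f^{(m!)^{j-1}}$; the inductive hypothesis gives that $\psi^i$ fixes every coordinate of height $<j$ for all $i$, so for $q$ of height $j$ and $a\in K$ a pigeonhole on the $m+1$ values $\psi^0(a)(q),\ldots,\psi^m(a)(q)$ produces $0\le \ell<\ell'\le m$ with $\MM_{Q_n}\models E_q(\psi^\ell(a),\psi^{\ell'}(a))$. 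Iterating then yields $\MM_{Q_n}\models E_q(a,\psi^{m!}(a))$, hence $\psi^{m!}(a)(q)=a(q)$, and $\psi^{m!}=f^{(m!)^j}$.

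Applied at $j=k$, the induction gives $f^{(m!)^k}(a)(q)=a(q)$ for every $a\in K$ and every $q\in Q_n$, so $\bigwedge_{q\in Q_n}E_q(f^{(m!)^k}(a),a)$ holds; the universal axiom $\Psi$ built into $\Phi_{Q_n}$ then forces $f^{(m!)^k}(a)=a$, completing the proof.

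The main obstacle -- quite mild -- is a single step in the inductive argument: in Lemma~\ref{boundeddelta}, the passage from $E_q(\psi^\ell(a),\psi^{\ell'}(a))$ to $E_q(a,\psi^{\ell'-\ell}(a))$ used $\sigma^{-\ell}\in\Aut(\MM_P)$, which is unavailable for a general elementary embedding. The fix is to observe that $f$, being elementary, not only preserves but also \emph{reflects} the atomic formula $E_q$; since $\psi^{\ell'}(a)=\psi^\ell(\psi^{\ell'-\ell}(a))$, reflecting $E_q$ through the embedding $\psi^\ell$ yields $E_q(a,\psi^{\ell'-\ell}(a))$ directly, after which the iteration to reach $\psi^{m!}$ proceeds verbatim as in Lemma~\ref{boundeddelta}.
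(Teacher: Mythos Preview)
Your argument is correct. The reflection trick you isolate is exactly what is needed: from $E_q(\psi^\ell(a),\psi^\ell(\psi^{\ell'-\ell}(a)))$ you pull back through the elementary embedding $\psi^\ell$ to get $E_q(a,\psi^{t}(a))$ with $t=\ell'-\ell$, and then forward preservation of $E_q$ under $\psi^t$ together with transitivity gives $E_q(a,\psi^{st}(a))$ for all $s$, in particular for $st=m!$.

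Your route differs from the paper's. The paper does not rerun the induction of Lemma~\ref{boundeddelta}; instead it first extends $f$ to a map $g:\MM_{Q_n}\to\MM_{Q_n}$ (as in the proof of Fact~\ref{check}(4), using that $\MM_{Q_n}$ is the maximal model of $\Phi_{Q_n}$ to see $g$ is surjective, hence an automorphism), and then simply cites Theorem~\ref{dichotomy} (equivalently Corollary~\ref{boundedconsequence}) to get that $\Aut(\MM_{Q_n})$ has bounded exponent, so $g^k=\mathrm{id}$ for some $k$, whence $f^k=\mathrm{id}$. The paper's proof is thus shorter and more modular, leaning on results already in hand. Your proof is more self-contained: it avoids the extension step entirely and shows directly that the bounded-exponent computation goes through for arbitrary elementary self-embeddings of any $K\models\Phi_{Q_n}$, not just automorphisms of $\MM_{Q_n}$. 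Both yield the same explicit bound $k=(m!)^{\hgt(Q_n)}$.
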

\begin{proof}
We can suppose $K \preceq \MM_{Q_n}$. As in the proof of Fact~\ref{check}(4) we can find some $g: \MM_{Q_n} \cong \MM_{Q_n}$ extending $f$. By Theorem~\ref{dichotomy}, $\Aut(\MM_{Q_n})$ has bounded exponent, so we can find some $k > 0$ such that $g^k = \mbox{id}$. Then necessarily $f^k = \mbox{id}$.
\end{proof}

For each $n$, recall from Section~\ref{QuotientsSection} that $[M]_{Q_n}$ is the quotient of $M$ by $E_{Q_n}$, which we view as an $\LL_{Q_n}$-structure, and similarly for $[N]_{Q_n}$; these are models of $\Phi_{Q_n}$, and whenever we have an $\LL_P$-embedding $f: M \to N$ we get an associated $\LL_{Q_n}$-embedding $[f]_{Q_n}: [M]_{Q_n} \to [N]_{Q_n}$.    Also, for every $a\in M$, let $[a]_{E_R}=\{a'\in M: E_R(a,a')\}$ denote the type definable $E_R=\bigwedge_{r\in R} E_r$-class of $a$.

%We see that the map $[f]_{Q_n}$ is actually an $\LL_{Q_n}$-isomorphism.  

 %**** Patched to include the stronger statement that we need (which reduces the three items to two). Namely, in the new item 1 we need the existence of such an h
 %**** I also changed the third (now second) item to be correct (previously we did not need to have g taking [f(a)]_{E_R} to [a]_{E_R})
 %LLLL--I changed the preabmle of the following lemma.  It really requires the presence of a "reverse map" to be true.
\begin{Lemma}\label{Stray2}
    Suppose $f: (M, \cc) \preceq_1^* (N, \dd)$ 
    and $g:(N,\dd)\preceq^*_1 (M,\cc)$ are  $1$-embeddings. 
    \begin{enumerate}
    \item  For all $n\in\omega$, $[f]_{Q_n}:[M]_{Q_n}\rightarrow [N]_{Q_n}$ is an $\LL_{Q_n}$-isomorphism (in particular, $f_{Q_n}$ is onto); in fact, we can find $h: (N, \dd) \preceq^*_1 (M, \cc)$ with $[h]_{Q_n}$ and $[f]_{Q_n}$ inverse to each other. 
    \item  For every $a\in M$,  the restriction map $f\mr{[a]_{E_R}}:[a]_{E_R}\rightarrow [f(a)]_{E_R}$ is onto.
    \end{enumerate}
    
    Analogous statements hold for $g$.
\end{Lemma}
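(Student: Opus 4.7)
The plan for (1) is to exploit that each $Q_n$ is bounded. Since $Q_n$ is downward closed and $P\setminus Q_n$ contains the $\omega$-chain $\{p_k: k>n\}$, minimal unboundedness forces $Q_n$ itself to be bounded (Fact~\ref{Qnfacts}), so by Corollary~\ref{boundedconsequence}, $\Phi_{Q_n}$ forbids nested sequences. I would first verify that the induced maps $[f]_{Q_n}$ and $[g]_{Q_n}$ (which are $\LL_{Q_n}$-embeddings by Lemma~\ref{welldefined}) are in fact $\LL_{Q_n}$-$1$-embeddings: given $\bar\alpha\in [M]_{Q_n}^{<\omega}$ and $\beta\in [N]_{Q_n}$, lift to representatives, apply the $1$-embedding property of $f$ in $\LL_P$, and project. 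Then the composition $[g]_{Q_n}\circ [f]_{Q_n}:[M]_{Q_n}\to [M]_{Q_n}$ is a $1$-embedding of $[M]_{Q_n}$ into itself; since $[M]_{Q_n}\models \Phi_{Q_n}$ and $\Phi_{Q_n}$ forbids nested sequences, Proposition~\ref{useforbid} forces it to be surjective. Symmetrically $[f]_{Q_n}\circ [g]_{Q_n}$ is surjective, so both $[f]_{Q_n}$ and $[g]_{Q_n}$ are $\LL_{Q_n}$-isomorphisms.

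To produce the desired $h$, let $\sigma := [g]_{Q_n}\circ [f]_{Q_n}$, an $\LL_{Q_n}$-automorphism of $[M]_{Q_n}$. By Lemma~\ref{Stray}, $\sigma^k=\mbox{id}$ for some $k>0$, so $\sigma^{-1}=\sigma^{k-1}$. Define $h:=(g\circ f)^{k-1}\circ g$. As a composition of $1$-embeddings, $h:(N,\dd)\preceq^*_1(M,\cc)$; and functoriality of the quotient gives
\[ [h]_{Q_n} = \sigma^{k-1}\circ [g]_{Q_n} = \sigma^{-1}\circ [g]_{Q_n}, \]
which a brief check confirms is the two-sided inverse of $[f]_{Q_n}$.

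For (2) I would work with $R$, the downward closure of the $\omega$-chain. Since $R$ is unbounded and $(P,\le,\delta)$ is minimally unbounded, $P\setminus R$ is bounded, so Lemma~\ref{ClassesForbidNested} gives that $\Phi^\forall_{PR}$ forbids nested sequences. The $E_R$-class $[a]_{E_R}$, viewed as an $\LL_P$-substructure of $M$, models $\Phi^\forall_{PR}$, and $f$ carries $[a]_{E_R}$ into $[f(a)]_{E_R}$ since each $E_q$ with $q\in R$ is atomic. The key step is to show that $f\mr{[a]_{E_R}}:[a]_{E_R}\to [f(a)]_{E_R}$ is itself an $\LL_P$-$1$-embedding. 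Given $\bar c\in [a]_{E_R}^{<\omega}$ and $d\in [f(a)]_{E_R}$, I would apply the $1$-embedding property of $f$ to the enlarged tuple $(\bar c, a)$ and element $d$, obtaining $c^*\in M$ with $\qftp(\bar c, a, c^*)=\qftp(f(\bar c),f(a),d)$. Matching of quantifier-free types forces each individual atomic formula to agree, so the hypothesis $E_q(f(a),d)$ for every $q\in R$ yields $E_q(a,c^*)$ for every $q\in R$, placing $c^*\in [a]_{E_R}$. Proposition~\ref{useforbid} then forces $f\mr{[a]_{E_R}}$ to be onto.

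The main subtlety to watch is precisely this last point: $E_R$ is an infinitary conjunction and so is not itself a quantifier-free formula, yet the $1$-embedding property only transfers individual quantifier-free formulas. The argument succeeds because each of the infinitely many conjuncts $E_q$ is itself atomic, so they can be transferred one at a time and then collected.
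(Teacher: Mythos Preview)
Your proof is correct and follows essentially the same route as the paper. For part~(1), the paper is slightly more economical: it applies Lemma~\ref{Stray} directly to the elementary self-embedding $[g\circ f]_{Q_n}$ of $[M]_{Q_n}$, obtaining finite order $k$ (hence bijectivity) in one stroke, so your preliminary verification that $[f]_{Q_n}$ and $[g]_{Q_n}$ are $1$-embeddings and your appeal to Proposition~\ref{useforbid} are unnecessary---Lemma~\ref{Stray} already delivers the isomorphism, and you invoke it anyway for the construction of $h$. For part~(2), your argument is the same as the paper's (and in fact more carefully written, particularly your explicit handling of the subtlety that $E_R$ is an infinite conjunction of atomic formulas).
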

\begin{proof}
(1)  By Lemma~\ref{Stray} we can find $k > 0$ such that $([g \circ f]_{Q_n})^k = ([f \circ g]_{Q_n})^k = \mbox{id}$. Let $h = (g \circ f)^{k-1} \circ g$. Then $h$ is as desired. 

(2)  Choose any $a\in M$.  Since $f$ is an $\LL_P$ embedding, $f$ maps $[a]_{E_R}$ into $[f(a)]_{E_R}$.  To see that the restriction map is onto,  $f:(M,\cc)\preceq_1^* (N,\dd)$
being a  1-embedding implies that the corresponding $\LL_P$-embedding $f:N\rightarrow M$ is also a 1-embedding.  It follows that the restriction map $f\mr{[a]_{E_R}}:[a]_{E_R}\rightarrow 
[f(a)]_{E_R}$ is as well.  Since $P\setminus R$ is bounded by assumption, $g\mr{[a]_{E_R}}$ is onto by Proposition~\ref{useforbid}. 
\end{proof}

\begin{Definition} {\em
Suppose $f: (M, \cc) \preceq_1^* (N, \dd)$. Then let the {\em inverse} of $f$, denoted $f^{-1}$, be the natural partial elementary map from $(N, \dd)$ to $(M, \cc)$, so the domain of $f^{-1}$ is the range of $f$. For each $n < \omega$ let $[f^{-1}]_{Q_n}: [N]_{Q_n} \to [M]_{Q_n}$ denote the $\LL_{Q_n}$-isomorphism $[f]_{Q_n}^{-1}$ (as exists by the preceding lemma). Make similar definitions for $g: (N, \dd) \preceq_1^* (M, \cc)$.}
\end{Definition}

With these notions in hand, we now define a back-and-forth system between $(M,\cc)$ and $(N,\dd)$.  In the following, note that $\bigwedge_{m < n} E_{p_m}$ is either the indiscrete equivalence relation if $n = 0$, or else $E_{p_{n-1}}$. 

\begin{Definition} \label{simdef}
    Suppose $\overline{a} \in M, \overline{b} \in N$ are tuples of the same length $i_*$. Then say that $\overline{a} \sim \overline{b}$ if there exist $(f_i, g_i)_{i < i_*}$ such that the following conditions all hold:

    \begin{enumerate}
        \item For each $i < i_*$, either $f_i: (M, \cc) \preceq_1^* (N, \dd)$ with inverse $g_i$ (so $g_i$ need not be total), or else $g_i: (N, \dd) \preceq_1^* (M, \cc)$ with inverse $f_i$ (so $f_i$ need not be total);
        \item Each $f_i(a_i) = b_i$ and $g_i(b_i)=a_i$;
        \item For all $i,j<i^*$ and for all $n < \omega$, if $M\models \bigwedge_{m<n} E_{p_m}(a_i,a_j)$ then $[f_i]_{Q_n} = [f_j]_{Q_n}$;
        \item For all $i,j<i^*$ and for all $n < \omega$, if $N\models \bigwedge_{m<n} E_{p_m}(b_i, b_j)$ then $[g_i]_{Q_n} = [g_j]_{Q_n}$.
    \end{enumerate}
\end{Definition}

Note that in the above, if $M\models E_R(a_i, a_j)$, then
as $R$ is the downward closure of $(p_n:n\in\omega)$,
we must have $f_i = f_j$ by (3).

\medskip
\noindent{\bf Claim 1.} 
Suppose $\overline{a} \in M, \overline{b} \in N$ are tuples of the same length $i_*$, and $(f_i, g_i)_{i < i_*}$ satisfy conditions (1), (2), and (3) in the definition of $\sim$. Then $\mbox{qftp}_{\LL_P(c)}(\overline{a}) = \mbox{qftp}_{\LL_P(c)}(\overline{b})$ and Clause~(4) holds as well.

\begin{proof}
% It suffices to verify $\mbox{qftp}_{L_P(c)}(\overline{a}) = \mbox{qftp}_{L_P(c)}(\overline{b})$, since $g_i = f_i^{-1}$.
Clearly each $\cc(a_i) = \dd(b_i)$ since $f_i$ is color-preserving.   Next, choose $i,j<i^*$.  We  show that
 $M\models E_p(a_i, a_j)$ if and only if $N\models E_p(b_i, b_j)$
 for every $p\in P$ by splitting into cases.   First, if $E_R(a_i,a_j)$ holds, then as $f_i=f_j$ we are done since
 $f_i$ preserves quantifier-free types.  If $E_R(a_i,a_j)$ fails,
 then choose $n$ to be 
least such that $M\models \neg E_{p_n}(a_i, a_j)$. Then by (3), 
 $[f_i]_{Q_n}([a_i]_{Q_n})=[b_i]_{Q_n}$ and $[f_i]_{Q_n}([a_j]_{Q_n})=[b_j]_{Q_n}$.  
  It follows that for all $p\in Q_n$, $M\models E_p(a_i, a_j)$ if and only if $N\models E_p(b_i, b_j)$. In particular this holds holds for $p= p_{n}$. By choice of $n$, we have that $M\models \lnot E_{p_n}(a_i, a_j)$, so $N\models \lnot E_{p_n}(b_i, b_j)$
  as well.  
  Thus, whenever $p > p_n$, both $M\models \lnot E_{p}(a_i, a_j)$ and $N\models\lnot E_p(b_i, b_j)$, so $E_p$ is preserved in all cases.  Thus, $\mbox{qftp}_{\LL_P(c)}(\overline{a}) = \mbox{qftp}_{\LL_P(c)}(\overline{b})$.  
  Clause~(4) follows from this since $f$ and $g$ are inverses.
\end{proof}

The existence of an  isomorphism $h:(M,\cc)\rightarrow (N,\dd)$ as in Theorem~\ref{SB}, 
follows immediately from our second claim.

\medskip\noindent{\bf Claim 2.} 
    $\sim$ describes  a back-and-forth system from $(M, \cc)$ to $(N, \dd)$. 

\begin{proof}
We have already established that $\sim$ preserves quantifier-free type, and it follows from the definition that $\sim$ is symmetric in $M$ and $N$, so it is enough to verify the forth condition. So suppose $\overline{a} \sim \overline{b}$ via $(f_i, g_i: i < i_*)$ and suppose $a_{i_*}$ is given. There are four cases. 

{\bf Case 1.} Suppose $\overline{a}$ is empty, i.e. $i_* = 0$. Then let $f_0: (M, \cc) \preceq_1^* (N, \dd)$ be any $1$-embedding, and this witnesses $a_0 \sim f_0(a_0)$.

{\bf Case 2.} Suppose $E_R(a_{i_*}, a_i)$ for some $i < i_*$. Then let $f_{i_*} = f_i, g_{i_*} = g_i$.   If $f_i$ was total,
then obviously $a_{i^*}\in\dom(f_i)$, so put $b_{i_*}:=f_i(a_{i^*})$.
On the other hand, if  $f_i$ is not total, then $g_i$ is total.  Then, applying Lemma~\ref{Stray2}(2) to $g_i$, we get that $a_{i^*}=g_i(b_{i_*})$ for some $b_{i_*}\in [b_i]_{E_R}$.   
In either case, we have $\overline{a} a_{i_*} \sim \overline{b} b_{i_*}$.

Assuming we are not in Case 1 nor Case 2,  let $n$ be maximal  such that there is $i < i_*$ with $M\models \bigwedge_{m<n} E_{p_m}(a_{i_*}, a_i)$. Fix such an $i$. 

{\bf Case 3.} If $a_{i_*} \in \mbox{dom}(f_i)$ then let $f_{i_*} = f_i, g_{i_*} = g_i$ and let $b_{i_*} = f_i(a_{i_*})$, and then $\overline{a} a_{i_*} \sim \overline{b} b_{i_*}$.

{\bf Case 4.} Suppose $a_{i_*} \not \in \mbox{dom}(f_i)$. Then $g_i: (N, \cc) \preceq_1^* (M, \dd)$  is total by Definition~\ref{simdef}(1).  By Lemma~\ref{Stray2}(1) we can find $f_{i_*}: (M, \cc) \preceq_* (N, \dd)$ with $[f_{i_*}]_{Q_n}$ and $[g_{i}]_{Q_n}$ inverse to each other. Let $g_{i_*} = f_{i_*}^{-1}$ and let $b_{i_*} = f_{i_*}(a_{i_*})$. Then this works.
\end{proof}

With the proof of Theorem~\ref{SB} in hand, 
    at the cost of introducing a large cardinal, we obtain 
our final result.  We first explain what large cardinal we will be using, namely an $\omega$-Erd\H{o}s cardinal:

\begin{Definition}  \label{Erdos}  {\em 
	Suppose $\alpha$ is an ordinal (we will only use the case $\alpha = \omega$). Then let $\kappa(\alpha)$ be the least cardinal $\kappa$ with $\kappa \rightarrow (\alpha)^{<\omega}_2$ (if it exists). In words: whenever $F: [\kappa(\alpha)]^{<\omega} \to 2$, there is some $X \subseteq \kappa(\alpha)$ of ordertype $\alpha$, such that $F \mr{[X]^n}$ is constant for each $n < \omega$.}
	
\end{Definition}

The cardinal $\kappa(\omega)$ is a large cardinal: it is always inaccessible and has the tree property. On the other hand, it is absolute to $\mathbb{V} = \mathbb{L}$, and is well below the consistency strength of a measurable cardinal. See \cite{Silver} for a description of these results.

%YYYY added "The second also also defines" sentence
In \cite{SB} the second author defines the notion of thickness. For every $\Phi \in \mathcal{L}_{\omega_1 \omega}$ and for every infinite cardinal $\lambda$ we have the thickness $\tau(\Phi, \lambda)$ of $\Phi$ at $\lambda$, a cardinal invariant of the complexity of $\Phi$. The second author also defines what it means for $(\Phi, \sim_{\alpha \omega})$ to have the Schr\"{o}der-Bernstein property. Then we have:
\begin{Theorem}
    \begin{enumerate}
    \item (Follows from Theorem 5.8 of \cite{SB}) For all $\Phi, \Psi \in \mathcal{L}_{\omega_1 \omega}$, if $\Phi \leq_B \Psi$ then for all $\lambda$, $\tau(\Phi, \lambda) \leq \tau(\Psi, \lambda)$;
    \item (Corollary 5.13 of \cite{SB}) If TAG denotes torsion abelian groups then $\tau(\mbox{TAG}, \lambda) = \beth_1(\lambda)$ whenever $\lambda$ is inaccessible or $\aleph_0$.
        \item (Theorem 11.8 of \cite{SB}) Suppose $\kappa(\omega)$ exists,  $\alpha < \kappa(\omega)$, and $(\Phi, \sim_{\alpha \omega})$ has the Schr\"{o}der--Bernstein property. Then for all $\lambda$, $\tau(\Phi, \lambda) \leq \lambda^{<\kappa(\omega)}$. In particular, $\tau(\Phi, \kappa(\omega)) \leq \kappa(\omega)$. 
    \end{enumerate}
    	
\end{Theorem}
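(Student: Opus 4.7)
The three items are direct quotations from the second author's work \cite{SB}, so the ``proof'' of this Theorem is really a citation; what I can usefully propose is a sketch of how each item would be obtained and how the package will be applied to prove Theorem~\ref{Intro3}. For item~(1), thickness $\tau(\Phi,\lambda)$ counts certain ``separated'' families of models of $\Phi$ of size $\lambda$, and a Borel reduction $F\colon \Mod(\Phi)\to\Mod(\Psi)$ preserves (non-)isomorphism, so any witnessing family in $\Mod(\Phi)$ pushes forward to a witnessing family in $\Mod(\Psi)$. The only verification is that the separation condition defining $\tau$ actually survives arbitrary Borel maps and not merely isomorphisms, which is exactly the content of Theorem~5.8 of \cite{SB}. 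For item~(2), one uses Ulm-type invariants to exhibit $\beth_1(\lambda)$ pairwise-distinguishable torsion abelian groups of cardinality $\lambda$, matched against a counting of Ulm invariants for the upper bound; the arithmetic works cleanly exactly when $\lambda=\aleph_0$ or $\lambda$ is inaccessible.

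For item~(3), which is the genuinely delicate step, the approach is to apply the Erd\H{o}s partition relation $\kappa(\omega)\to(\omega)^{<\omega}_2$ to a putative thick family in $\Mod(\Phi)$ of size $>\lambda^{<\kappa(\omega)}$: colouring finite subtuples by their $\sim_{\alpha\omega}$-class, one extracts an infinite $\sim_{\alpha\omega}$-indiscernible subsequence inside the candidate thick family. Then every pair of elements in the subsequence is mutually $\sim_{\alpha\omega}$-embeddable, so the Schr\"{o}der--Bernstein hypothesis collapses the whole subsequence to a single isomorphism type, contradicting the assumed separation. The step I expect to be the main obstacle here is the bookkeeping that aligns the (finitary) colouring needed by Erd\H{o}s with the transfinite quantifier-rank $\alpha$ built into $\sim_{\alpha\omega}$; the requirement $\alpha<\kappa(\omega)$ in the hypothesis is precisely to ensure the relevant partition is a legal Erd\H{o}s input, and this matching is carried out in Theorem~11.8 of \cite{SB}.

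The intended deployment in the present paper is as follows. Theorem~\ref{SB} here supplies the Schr\"{o}der--Bernstein property for $1$-embeddings of coloured models of $\Phi_P$, which is exactly the $\alpha=1$ case of the hypothesis of item~(3); hence item~(3) bounds $\tau(\CC(\Phi_P),\kappa(\omega))\leq\kappa(\omega)$ whenever $(P,\leq,\delta)$ is minimally unbounded. On the other hand, item~(2) gives $\tau(\mathrm{TAG},\kappa(\omega))=\beth_1(\kappa(\omega))=2^{\kappa(\omega)}>\kappa(\omega)$, using that $\kappa(\omega)$ is inaccessible and so a strong limit. Item~(1) then rules out any Borel reduction $\mathrm{TAG}\leq_B \CC(\Phi_P)$, and since TAG is Borel complete and $T_P$ is Borel equivalent to $\CC(\Phi_P)$ by Lemma~\ref{shift1}, we conclude that $T_P$ is not Borel complete, which is Theorem~\ref{Intro3}.
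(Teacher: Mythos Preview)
Your proposal is correct: the paper offers no proof of this theorem, treating it purely as a citation of results from \cite{SB}, and you correctly identify this. Your heuristic sketches of items (1)--(3) and your outline of how they combine with Theorem~\ref{SB} and Lemma~\ref{shift1} to yield Theorem~\ref{Intro3} match the paper's Corollary~\ref{TAGcor} and Theorem~\ref{SecondHalf} exactly.

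One small correction in your application paragraph: you write ``since TAG is Borel complete,'' but whether torsion abelian groups are Borel complete is a well-known open problem going back to Friedman--Stanley. Fortunately this is not needed: the argument only requires that if $\CC(\Phi_P)$ \emph{were} Borel complete then TAG would reduce to it, which is immediate from the definition of Borel completeness. The paper's Corollary~\ref{TAGcor} is phrased this way.
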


Putting these together we get 

\begin{Corollary}  \label{TAGcor}
    Suppose $\kappa(\omega)$ exists, and $\alpha < \kappa(\omega)$, and $(\Phi, \sim_{\alpha \omega})$ has the Schr\"{o}der--Bernstein property. Then TAG is not Borel reducible to $\Phi$, and so $\Phi$ is not Borel complete.
\end{Corollary}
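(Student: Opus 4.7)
The plan is to derive a direct contradiction by specializing the cardinal parameter $\lambda$ to $\kappa(\omega)$ and chaining the three clauses of the Theorem quoted just above the Corollary. So suppose toward a contradiction that $\mathrm{TAG} \leq_B \Phi$.

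Clause (1) of the Theorem, applied at $\lambda = \kappa(\omega)$, gives
\[
\tau(\mathrm{TAG},\kappa(\omega)) \;\leq\; \tau(\Phi,\kappa(\omega)).
\]
As recorded in the paragraph after Definition~\ref{Erdos}, $\kappa(\omega)$ is inaccessible, so clause (2) evaluates the left-hand side:
\[
\tau(\mathrm{TAG},\kappa(\omega)) \;=\; \beth_1(\kappa(\omega)) \;=\; 2^{\kappa(\omega)}.
\]
On the other hand, the hypothesis that $(\Phi,\sim_{\alpha\omega})$ has the Schr\"oder--Bernstein property for some $\alpha < \kappa(\omega)$ lets us invoke clause (3) to bound the right-hand side:
\[
\tau(\Phi,\kappa(\omega)) \;\leq\; \kappa(\omega).
\]
Combining the three displayed inequalities yields $2^{\kappa(\omega)} \leq \kappa(\omega)$, which is absurd. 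Hence $\mathrm{TAG} \not\leq_B \Phi$.

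For the second assertion, I would appeal to the classical theorem of Friedman and Stanley that TAG is Borel complete: if $\Phi$ were Borel complete, then every Borel-invariant class would Borel-reduce to $\Phi$, in particular $\mathrm{TAG} \leq_B \Phi$, contradicting what was just proved. Thus $\Phi$ is not Borel complete. I expect no genuine obstacle: the Corollary is explicitly framed as an assembly of the three parts of the preceding Theorem, and the only external ingredient is the well-known Borel completeness of TAG.
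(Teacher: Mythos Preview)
Your proof is correct and is exactly the intended assembly of the three clauses; the paper itself offers no further detail beyond ``Putting these together we get.'' One small correction: it is \emph{not} a theorem of Friedman--Stanley that TAG is Borel complete---this remains a well-known open problem---but as your own argument already shows, you do not need it: if $\Phi$ were Borel complete then by definition every invariant Borel class, in particular TAG, would reduce to $\Phi$.
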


Our final theorem follows immediately.

\begin{Theorem}  \label{SecondHalf}
Suppose $(P, \leq, \delta) \in \PP$ is minimally unbounded. If $\kappa(\omega)$ exists then $T_P$ is not Borel complete.
\end{Theorem}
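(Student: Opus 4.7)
The strategy is to reduce to the Schr\"{o}der--Bernstein machinery from \cite{SB} that has already been quoted in Corollary~\ref{TAGcor}, with Theorem~\ref{SB} supplying the required SB property. Every substantive piece of work has been done; what remains is to stitch it together.

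First, by Lemma~\ref{shift1} the theory $T_P$ is Borel equivalent to the class $\CC(\Phi_P)$ of colorings of countable models of $\Phi_P$, so it suffices to prove that $\CC(\Phi_P)$ is not Borel complete. Second, I would cast Theorem~\ref{SB} in the form required by Corollary~\ref{TAGcor}: the theorem says that whenever two countable colored models $(M,\cc),(N,\dd)$ admit $1$-embeddings in both directions (in the sense of Definition~\ref{SBembed}), then $(M,\cc)\cong (N,\dd)$. Any $\sim_{\alpha\omega}$-style equivalence embedding preserves, in particular, the quantifier-free type of every finite tuple together with one additional element, i.e.\ gives a $1$-embedding. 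Hence the back-and-forth invariant $\sim_{\alpha\omega}$-SB property holds for $\CC(\Phi_P)$ for any $\alpha\ge 1$; take for instance $\alpha=1<\kappa(\omega)$, which is well below the large cardinal bound since $\kappa(\omega)$ is inaccessible.

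Third, apply Corollary~\ref{TAGcor} to $\Phi:=\CC(\Phi_P)$ with this choice of $\alpha$. The existence of $\kappa(\omega)$ and the Schr\"{o}der--Bernstein property furnished above together yield that TAG is not Borel reducible to $\CC(\Phi_P)$, and in particular $\CC(\Phi_P)$ is not Borel complete. Composing with the Borel equivalence of Lemma~\ref{shift1}, $T_P$ is not Borel complete either, proving the theorem.

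The only genuinely delicate step is the second one, namely verifying that Theorem~\ref{SB} is strong enough to imply the SB property in the precise form demanded by \cite{SB}. The rest is bookkeeping. I do not expect any serious obstacle here: the definition of a $1$-embedding in Definition~\ref{SBembed} is tailored so that back-and-forth equivalences of any ordinal height automatically restrict to $1$-embeddings in both directions, so Theorem~\ref{SB} upgrades directly to the required SB statement. The conceptual content has been concentrated in Theorem~\ref{SB}, whose proof exploited both the existence of the distinguished $\omega$-chain $(p_n:n<\omega)$ guaranteed by minimal unboundedness and the boundedness of each $Q_n$ (via Lemma~\ref{Stray} and Lemma~\ref{Stray2}) to build the back-and-forth system witnessing isomorphism.
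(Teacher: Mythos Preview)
Your proposal is correct and follows essentially the same path as the paper: reduce to $\CC(\Phi_P)$ via Lemma~\ref{shift1}, observe that the embeddings $\preceq_{1\omega}$ appearing in the Schr\"{o}der--Bernstein property of \cite{SB} are in particular $\preceq_1^*$-embeddings, invoke Theorem~\ref{SB} to get $(\CC(\Phi_P),\sim_{1\omega})$ has SB, and then apply Corollary~\ref{TAGcor} with $\alpha=1$. The only cosmetic difference is that the paper makes explicit the definition of $\preceq_{1\omega}$ (replacing quantifier-free type by first-order type in Definition~\ref{SBembed}) before noting it is trivially stronger than $\preceq_1^*$.
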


\begin{proof}  Suppose $(P, \leq, \delta) \in \PP$ is minimally unbounded and $\kappa(\omega)$ exists. We claim that $(\CC(\Phi_P), \sim_{1 \omega})$ has the Schr\"{o}der-Bernstein property, which suffices by Corollary~\ref{TAGcor} and Lemma~\ref{shift1}. Let $\preceq_{1, \omega}$ be defined like $\preceq_1^*$, except replacing quantifier-free type by first-order type. We need to show that for all countable $(M, \cc), (N, \dd) \models \CC(\Phi_P)$, if there exist $f: (M, \cc) \preceq_{1 \omega} (N, \dd)$ and $g: (N, \dd) \preceq_{1 \omega} (M, \cc)$ then $(M, \cc) \cong (N, \dd)$. This follows at once from Theorem~\ref{SB}, using the trivial fact that $\preceq_{1 \omega}$ is a stronger notion than $\preceq_1^*$.
\end{proof}

\section{Questions}

The first question is the most obvious:

\medskip

\noindent \textbf{Question.} Can we remove the large cardinal? This would involve getting a better handle on the minimally unbounded case.

\medskip

We also ask:

\medskip
\noindent \textbf{Question.} Which mutually algebraic theories are Borel complete? What if we restrict attention to tame expansions of $REF(bin)$?

\appendix

\section{Classifying the reducts of models of REF}

In this Appendix, we concentrate on refining equivalence relations, possibly with infinite splitting.  Fix a language $\LL=\{E_n:n\in \omega\}$ and let $REF$ be the (incomplete) theory
asserting that each $E_n$ is an equivalence relation; $E_{n+1}$ refines $E_n$, i.e., $REF\vdash \forall x\forall y (E_{n+1}(x,y)\rightarrow E_n(x,y))$;
and there is a function $\delta:\omega\rightarrow\{2, 3, \ldots, \infty\}$ specifying the number of classes  $E_{n+1}$ partitions each $E_n$-class into.    For the Appendix, we allow infinite splitting.  
We classify the reducts of any model $M\models REF$, but first we formally define a reduct of a structure.  

\begin{Definition}  {\em
Given a non-empty set $M$ and two families $\A=\{D_i:i\in I\}$ and $\A'=\{D_j':j\in J\}$ of sets of subsets of $M^{k(j)}$ for various $k(j)$,
we say that $\A$ and $\A'$ are {\em $\emptyset$-definably equivalent} if
every $D_j'$ is $\emptyset$-definable in the structure $(M,D_i:i\in I)$, and every $D_i$ is $\emptyset$-definable in the structure $(M,D_j':j\in J\}$.
}
\end{Definition}

Evidently, if $\A$ and $\A'$ are $\emptyset$-definably equivalent, then the two structures above have the same definable sets, either with or without parameters.

\begin{Definition}  {\em  Suppose $M$ is an $\LL$-structure and suppose  $\{D_i:i\in I\}$ is any set of $\emptyset$-definable subsets of $M^{k(i)}$ for various $k(i)$.
Any such set defines a {\em reduct} of $M$.
}
\end{Definition}

\medskip\centerline{{\bf  Throughout this Appendix, `definable' always means $\emptyset$-definable.}}
\medskip

An obvious class of reducts of a model of REF have the form $\LL_I:=\{E_n:n\in I\}$ for an arbitrary subset $I\subseteq \omega$.  Our theorem is that, up to $\emptyset$-definable equivalence,
these are the only reducts of any $M\models REF$.

\begin{Theorem} \label{app}  Let $M\models REF$ be arbitrary and let $\A=\{D_i:I\in I\}$ be a set of definable subsets of $M^{k_i}$ for various $k_i$.
Then there is a subset $J\subseteq \omega$ such that $\A$ and $\{E_j:j\in J\}$ are $\emptyset$-definably equivalent.
\end{Theorem}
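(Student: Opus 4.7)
The plan is to show, for each $\emptyset$-definable subset $D\subseteq M^k$ of $M$, that $D$ isolates a finite set $N(D)\subseteq\omega$ of \emph{essential} indices, that each $E_n$ with $n\in N(D)$ is $\emptyset$-definable from $D$ in the reduct $(M,D)$, and that $D$ itself is $\emptyset$-definable from $\{E_n:n\in N(D)\}$. Setting $J:=\bigcup_{i\in I}N(D_i)$ will then deliver both directions of $\emptyset$-definable equivalence between $\mathcal{A}$ and $\{E_j:j\in J\}$: each $E_j$ is definable from some $D_i\in\mathcal{A}$, and each $D_i$ is definable from $\{E_n:n\in N(D_i)\}\subseteq\{E_j:j\in J\}$.

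First I would verify quantifier elimination for $M\models REF$ in the language $\LL$: every $\emptyset$-definable subset of $M^k$ is a Boolean combination of atomic formulas $E_n(x_i,x_j)$ and $x_i=x_j$, involving only finitely many indices $n$. I then define $N(D)$ intrinsically: $n\in N(D)$ iff $D$ is not $\emptyset$-definable from $\LL\setminus\{E_n\}$, equivalently iff there exist $\LL$-types $\pi_1\subseteq D$ and $\pi_2\not\subseteq D$ agreeing on $\LL\setminus\{E_n\}$. By QE, $N(D)$ is finite. Starting from any defining QF formula for $D$ and iteratively rewriting to eliminate atomic subformulas $E_m(x_i,x_j)$ with $m\notin N(D)$ (using that $D$ is $\emptyset$-definable from $\LL\setminus\{E_m\}$ whenever $m\notin N(D)$), one obtains a formula using only indices in $N(D)$, exhibiting $D$ as $\emptyset$-definable from $\{E_n:n\in N(D)\}$.

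The key lemma is that each $E_n$ with $n\in N(D)$ is $\emptyset$-definable from $D$ alone, which I would prove by induction on $n^*:=\max N(D)$. In the inductive step, define the $\emptyset$-definable equivalence relation $\sim_D$ on $M$: $x\sim_D y$ iff $x=y$, or for every $(k-1)$-tuple $\bar a$ and every coordinate position $i$, the two $k$-tuples obtained from $\bar a$ by inserting $x$ at position $i$ and by inserting $y$ at position $i$ agree on membership in $D$. The inclusion $E_{n^*}\subseteq\sim_D$ is immediate since $D$ uses only $E_m$ for $m\le n^*$. For the reverse, suppose $\neg E_{n^*}(x,y)$; let $m^{**}$ be the largest element of $N(D)$ with $E_{m^{**}}(x,y)$, or $m^{**}:=-1$ if none exists, and let $k$ be the successor of $m^{**}$ in $N(D)$, which exists since $m^{**}<n^*$. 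Choosing $z\in[x]_{E_k}\setminus\{x\}$ forces $(x,z)$ to have $N(D)$-truncated agreement level exactly $k$ while $(y,z)$ has truncated level $m^{**}<k$. Essentiality of $E_k$ supplies $\LL$-types $\pi_1\subseteq D$ and $\pi_2\not\subseteq D$ agreeing off $E_k$ and differing in $E_k$-relation at some pair of positions; placing $(x,z)$ or $(y,z)$ into that position pair and choosing auxiliary witnesses $\bar c$ realizing the common structure on the remaining positions yields $(x,z,\bar c)\in D$ while $(y,z,\bar c)\notin D$, whence $x\not\sim_D y$. Having now defined $E_{n^*}$ from $D$, pass to the quotient $\tilde M:=M/E_{n^*}$, itself a model of $REF$ in the language $\{E_m:m<n^*\}$, and to the induced relation $\tilde D:=D/E_{n^*}$ on $\tilde M$; then $\max N(\tilde D)<n^*$, so the inductive hypothesis defines each $E_m$ with $m\in N(D)\setminus\{n^*\}$ from $\tilde D$ in $\tilde M$, and these definitions pull back to $M$ since both $E_{n^*}$ and $\tilde D$ are $\emptyset$-definable from $D$.

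The main obstacle is the combinatorial construction of the auxiliary witnesses $\bar c$ in the $\sim_D\subseteq E_{n^*}$ argument: in an arbitrary $M\models REF$ the required $\LL$-types may simply not be realized by enough tuples to carry out the construction. To handle arbitrary $M$, I would pass to a saturated elementary extension $M^*\succeq M$, carry out the witness construction there, and then descend the conclusion to $M$ using the fact that essentiality, the set $N(D)$, and $\emptyset$-definability are all preserved under elementary extensions.
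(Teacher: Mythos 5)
Your high-level strategy (reduce to a single definable $D$, isolate a finite set of ``essential'' indices, and prove mutual definability) is the same as the paper's, but the two steps that carry all the weight are not justified, and one of them is false as stated. The first gap is the claim that $D$ is $\emptyset$-definable from $\{E_n: n\in N(D)\}$. Knowing that $D$ is definable from $\LL\setminus\{E_m\}$ for each inessential $m$ \emph{separately} does not give definability from the intersection of these sublanguages, and the proposed ``iterative rewriting'' is not a procedure: the defining formula you obtain after discarding $E_m$ may mention indices not present before (including other inessential ones), so the iteration need not terminate at $N(D)$. What you actually need is a statement of the form ``if $D$ is $\LL_u$-definable and also definable without $E_\ell$, then $D$ is $\LL_{u\setminus\{\ell\}}$-definable,'' and proving this is the entire content of Proposition~\ref{MainLemma} and the induction in Theorem~\ref{apprevise}. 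The same obstruction defeats the type-by-type version of your argument: a complete quantifier-free type of a $k$-tuple is a family of downward-closed agreement sets subject to ultrametric constraints, so one cannot in general alter the level-$n$ data of a single pair of coordinates while freezing everything else (if all three pairs among $x_1,x_2,x_3$ agree up to level $n$, lowering the agreement of $x_1,x_2$ alone is inconsistent). Hence the single-index moves you rely on do not generate the relation ``agree on $N(D)$,'' which is why the paper needs the decomposition $D\leftrightarrow\bigvee_\pi D^\pi_\ell$, the irreflexivity reduction of Lemma~\ref{technical}, and the three-case witness construction in Claim 3 of Proposition~\ref{MainLemma}.

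The second gap is in your key lemma: the inclusion $E_{n^*}\subseteq\ \sim_D$ fails in general. Take $D=\{(x_1,x_2): E_{n^*}(x_1,x_2)\wedge x_1\neq x_2\}$, so $N(D)=\{n^*\}$; testing with $\bar a=(x)$ shows that any distinct $E_{n^*}$-equivalent $x,y$ satisfy $x\not\sim_D y$, and in fact $\sim_D$ collapses to equality. The problem is that your substitution test quantifies over tuples $\bar a$ that may contain $x$ itself or elements $E_{n^*}$-equivalent to it, where replacing $x$ by $y$ changes the equality pattern; this is precisely why the paper adjoins $E_\omega$ (equality) to the language, reduces to $u$-irreflexive $D$, and guards the quantifiers in $\psi$ with $\Edist_k$. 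Note also that $E_{n^*}\subseteq\ \sim_D$ already presupposes that $D$ is definable using only indices $\le n^*$, i.e., it presupposes the unproved first step. Finally, passing to a saturated extension does not rescue the reverse inclusion: the difficulty in producing the auxiliary witnesses $\bar c$ is the \emph{consistency} of the demanded configuration (placing $(x,z)$ and $(y,z)$ into the distinguished positions of $\pi_1,\pi_2$ while matching the remaining coordinates), not the realizability of an already-consistent type; compare the paper's careful exchanges using $\theta$ and $\psi$ and the element $a^*$ in the Subclaim of Proposition~\ref{MainLemma}.
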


Towards a proof of Theorem~\ref{app}, first note that it suffices to prove this when $\A$ is a singleton, i.e., we are given a single definable subset 
$D(\xbar)\subseteq M^{\lg(\xbar)}$.  Second, it will simplify the notation to reindex the original language $\LL$ to include two additional equivalence relations.
We insist that $E_0$ is the equivalence relation on $M$ with only one class, i.e., $M\models \forall x\forall y E_0(x,y)$, and we add a new equivalence relation $E_\omega$
for equality, i.e., $M\models \forall x \forall y (E_\omega(x,y)\leftrightarrow x=y)$.  

What we will really prove is the following Theorem, which immediately yields Theorem~\ref{app}. We phrase it this way to facilitate a delicate induction.

\begin{Theorem}  \label{apprevise}  Let $M\models REF$ and let $D(x_0,\dots, x_{n-1})\subseteq M^n$ be definable.  Then there is a finite set
$F\subseteq \omega+1$ such that $\{D,=\}$ and $\{E_j:j\in F\}$ are $\emptyset$-definably equivalent.
\end{Theorem}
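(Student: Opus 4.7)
The plan is to find, for each $\emptyset$-definable $D\subseteq M^n$, a finite $F\subseteq\omega+1$ that simultaneously supports a formula for $D$ and is recoverable from $D$. I would proceed in three steps. First, invoke quantifier elimination for $REF$ in the language $\{E_n : n\leq\omega\}$ (taking $E_\omega$ to be equality, per the appendix's convention) to rewrite the formula defining $D$ as a quantifier-free one; such a formula mentions only finitely many atoms $E_n(x_i,x_l)$ and $x_i=x_l$, so collecting their indices yields a finite $F_0\subseteq\omega+1$ with $D$ $\emptyset$-definable in $(M,E_j:j\in F_0)$. This settles one half of the sought equivalence.

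For the other half, call $j\in F_0$ \emph{essential} if $D$ is not $\emptyset$-definable from $\{E_{j'}:j'\in F_0\setminus\{j\}\}\cup\{=\}$, and iteratively delete inessential indices to obtain $F\subseteq F_0$ in which every index is essential and $D$ remains definable from $\{E_j:j\in F\}\cup\{=\}$. The core of the proof is then the Key Lemma: each essential $j\in F$ has $E_j$ itself $\emptyset$-definable in $(M,D,=)$. I would prove it by downward induction on $j\in F$. For the top index $j_M:=\max F$, the ``$D$-indistinguishability'' relation
\[
x\sim_D y\ \Longleftrightarrow\ \bigwedge_{i<n}\forall\bar z\,\bigl(D(z_0,\ldots,x,\ldots,z_{n-1})\leftrightarrow D(z_0,\ldots,y,\ldots,z_{n-1})\bigr)
\]
(with $x,y$ placed at coordinate $i$) is $\emptyset$-definable from $D$; passing to a saturated $N\succeq M$, essentiality of $j_M$ together with $\delta(n)\geq 2$ and the finite amalgamation property of $REF$ shows $\sim_D$ coincides exactly with $E_{j_M}$. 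For smaller $j\in F$, given that each $E_{j'}$ with $j'>j$ in $F$ is already definable from $D$, essentiality produces witness tuples $\bar a,\bar b\in N^n$ agreeing on every $E_{j'}$-atom for $j'\in F\setminus\{j\}$ but with $D(\bar a)\neq D(\bar b)$; interpolation by finite amalgamation reduces to a single coordinate-pair discrepancy, say $(0,1)$, and one expresses $E_j(x,y)$ as the existence of a tuple $\bar z$ making $(x,y,\bar z)$ realize a designated $F\setminus\{j\}$-type $p$ conjoined with the appropriate $D$ or $\neg D$ witness, the atoms of $p$ for larger $j'$ being available inductively and for $=$ being available by hypothesis.

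The main obstacle is precisely the atoms of $p$ pertaining to $E_{j'}$ for $j'<j$, which have \emph{not} yet been recovered from $D$ at the step handling $j$. One must therefore either (a) arrange that those smaller atoms become irrelevant once the anchor pattern is chosen carefully — effectively a saturation argument showing that the $E_j$-status of $(x,y)$ can be decided by witnesses $\bar z$ whose finer structure is unconstrained — or (b) reorganize the induction so that when $j$ is handled, the atoms at smaller indices have been absorbed into the now-available coarser equivalences through the refining-chain structure. Executing this combinatorial interpolation cleanly, while also disposing of the $j=\omega$ case (easy, since equality already sits on the left-hand side of the target equivalence) and ensuring the resulting formula defining $E_j$ transfers back from $N$ to an arbitrary $M\models REF$, is the principal technical difficulty and is what the body of the appendix is devoted to carrying out.
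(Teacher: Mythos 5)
The first half of your plan (extracting a finite $F_0$ supporting a definition of $D$) matches the paper and is fine, but the second half has a genuine gap that you yourself flag and then defer: the entire difficulty of the theorem is showing that the surviving $E_j$'s are recoverable from $\{D,=\}$, and your sketch does not carry this out. Concretely, your Key Lemma asserts that every ``essential'' $j$ in a minimized index set has $E_j$ definable from $\{D,=\}$ alone, but the only evidence offered is the $\sim_D$-relation for the top index plus an induction whose inductive step you admit breaks down on the atoms $E_{j'}(x_i,x_l)$ with $j'<j$ --- precisely the atoms that have not yet been recovered. Options (a) and (b) that you list are not arguments; they are restatements of the problem. Essentiality of $j$ (i.e.\ $D$ not being definable without $E_j$) does not by itself yield a formula for $E_j$ in terms of $D$; turning that non-definability into a definition is the content of the paper's Proposition~\ref{MainLemma}, and nothing in your sketch substitutes for it.

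The paper's organization is also materially different in ways your plan would need to absorb. First, it reduces to the case that $D$ is $u$-irreflexive by decomposing $D$ as a disjunction of the formulas $D^\pi_\omega$ over projections $\pi$; without this reduction, even the base case of your induction is shaky, since your unguarded $\sim_D$ can be disturbed by tuples in which $x$ or $y$ collides with a $z_i$ (the paper's $\psi(x,y)$ carries the guard $\Edist_k(x,\zbar)\wedge\Edist_k(y,\zbar)$ for exactly this reason). Second, the paper's dichotomy is top-down and \emph{relative}: for the two largest indices $\ell<k$ of $u$ it shows that either $E_\ell$ is definable from $\{D,E_k\}$ (not from $\{D,=\}$ alone --- the top relation is used as an auxiliary throughout) or $D$ is $\LL_{u\setminus\{\ell\}}$-definable; and in the first alternative the recursion does not keep working with $D$ but replaces it by the family of relativized formulas $D^\pi_\ell$, which are irreflexive with respect to the smaller index set. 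Your ``minimize first, then recover each $E_j$ directly from $\{D,=\}$'' scheme skips all of this, and I do not see how to prove your Key Lemma without rebuilding essentially the same machinery ($\theta$, $\psi$, the $\omega$-categoricity/automorphism-invariance criterion for the finite reducts, and the case analysis with the auxiliary element $a^*$). As written, the proposal is a correct statement of what must be proved together with an acknowledgment that the proof is missing.
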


The idea of the proof of Theorem~\ref{apprevise} is as follows.  Clearly, any definable $D\subseteq M^n$ mentions only finitely many $E_j$'s, so there is a finite
subset $u\subseteq \omega+1$ for which $D$ is definable in $M_u$, the reduct of $M$ to the language $\LL_u:=\{E_j:j\in u\}$.  Without loss, we may assume
$\{0,\omega\}\subseteq u$.  This is `half' of what we require.  We also need to show that we can `whittle away' unnecessary $E_j$'s, still maintaining that $D$ is definable,
so that each of the resulting $E_j$'s are $\{D,=\}$-definable.

Our toolbox makes heavy use of the fact that  for any finite $u\subseteq \omega+1$ with $0\in u$, the reduct $M_u$ of $M$ to  
$\LL_u=\{E_j:j\in u\}$ is an $\omega$-categorical structure. 
First, we conclude that a subset $D\subseteq M_u^k$ is $\LL_u$-definable if and only if $D$ is invariant under every $\LL_u$-automorphism of $M_u$.
As well,  there are only finitely many 2-types, each of which is isolated.  Moreover, we can explicitly describe these 2-types and their isolating formulas:

$x = y$ is one complete type. %YYYYThis one had been forgotten. Next one patched.
If $k=\max(u) < \omega$, then $E_k(x,y) \land x \not= y$ isolates another complete type.  
For each $i\in u$ with $i<k$, let $i^+$ denote the immediate successor of $i$ in $u$.  
Then for each such $i$, there is a complete 2-type $p_i(x,y)$ isolated by $E_i(x,y)\wedge\neg E_{i+1}(x,y)$.  
[Note that since $0\in u$ and $E_0(x,y)$ always holds, this list is exhaustive.]
We introduce one technical concept.  

\begin{Definition}  {\em  Say $D(\xbar)$ is $\LL_u$-definable with $k=\max(u)$.  We say $D$ is {\em $u$-irreflexive} if $D(\xbar)\vdash \bigwedge_{i\neq j} \neg E_k(x_i,x_j)$.
}
\end{Definition}

\begin{Lemma}  \label{technical}  Suppose $D(\xbar)$ is $\LL_u$-definable with $k=\max(u)$.  Let $w=u\setminus\{k\}$ and suppose $D(\xbar)$ is $w$-irreflexive.
Then $D(\xbar)$ is $\LL_w$-definable.
\end{Lemma}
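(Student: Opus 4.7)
The plan is to put $D$ into a quantifier-free atomic normal form using the $\omega$-categoricity of $M_u$, then use $w$-irreflexivity to eliminate the atoms mentioning $E_k$.

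By the toolbox observations recalled just before the lemma, $M_u$ is $\omega$-categorical and every complete $\LL_u$-2-type is isolated by a Boolean combination of atoms $E_m(x_i, x_j)$ with $m \in u$. Since $\LL_u$ consists entirely of binary equivalence relations, the isomorphism type of any finite substructure of $M_u$ is determined by its pairwise 2-type data, so every complete $\LL_u$-$n$-type is likewise isolated by a Boolean combination of such atoms; equivalently, $Th(M_u)$ admits quantifier elimination in $\LL_u$. In particular, $D$ is defined in $M$ by some Boolean combination $\phi(\bar{x})$ of atoms $E_m(x_i, x_j)$ with $m \in u$.

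Next, since $k = \max(u) > \max(w)$ in $\omega+1$, the refinement axiom of $REF$ gives $E_k(x, y) \to E_{\max(w)}(x, y)$, and hence $\neg E_{\max(w)}(x, y) \vdash \neg E_k(x, y)$. Combined with the $w$-irreflexivity of $D$, this yields $D(\bar{x}) \vdash \bigwedge_{i \neq j} \neg E_k(x_i, x_j)$. Put $\chi(\bar{x}) := \bigwedge_{i \neq j} \neg E_{\max(w)}(x_i, x_j)$, an $\LL_w$-formula implied by $D$. Under $\chi$, each atomic subformula $E_k(x_i, x_j)$ appearing in $\phi$ has a tautologically determined truth value ($\top$ if $i = j$, $\bot$ otherwise), so replacing each such atom in $\phi$ with that truth value produces an $\LL_w$-formula $\phi'(\bar{x})$ with $\chi \vdash \phi \leftrightarrow \phi'$. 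Thus $D(\bar{x}) \leftrightarrow \phi(\bar{x}) \wedge \chi(\bar{x}) \leftrightarrow \phi'(\bar{x}) \wedge \chi(\bar{x})$, exhibiting $D$ as $\LL_w$-definable.

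The only substantive step is the atomic-normal-form derivation of the first paragraph: this rests on quantifier-elimination for $Th(M_u)$, which follows from $\omega$-categoricity together with the observation that in a language of binary equivalence relations the isomorphism type of any finite substructure is captured by the 2-types of its pairs. Granted the normal form, the rest of the argument is a direct truth-value substitution of the $E_k$-atoms under $\chi$.
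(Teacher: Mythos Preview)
Your proof is correct, and it takes a genuinely different route from the paper's. The paper argues semantically: it shows that $D$ is invariant under every $\LL_w$-automorphism $\sigma$ of $M_w$ (hence $\LL_w$-definable by $\omega$-categoricity). Given $\bar a$ with $D(\bar a)$ and $\bar b=\sigma(\bar a)$, $w$-irreflexivity forces $\neg E_{\max(w)}(a_i,a_j)$ for $i\neq j$, hence $\neg E_k(a_i,a_j)$; since $\sigma$ preserves $E_{\max(w)}$, the same holds for $\bar b$, so $\sigma$ in fact preserves the full $\LL_u$-type of $\bar a$, whence $D(\bar b)$. Your argument is syntactic: you invoke quantifier elimination for $Th(M_u)$ to write $D$ as a Boolean combination $\phi$ of atoms $E_m(x_i,x_j)$, then observe that under the $\LL_w$-formula $\chi:=\bigwedge_{i\neq j}\neg E_{\max(w)}(x_i,x_j)$ every $E_k$-atom collapses to a truth value, yielding an explicit $\LL_w$-definition $\phi'\wedge\chi$. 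Both routes hinge on the same core observation (that $w$-irreflexivity already forces $E_k$-irreflexivity), but yours is constructive and produces a concrete formula, while the paper's avoids the need to justify quantifier elimination. One small point: your inference of QE from ``2-type data determines isomorphism type of finite substructures'' tacitly uses the homogeneity of $M_u$ (to pass from isomorphism of substructures to equality of complete types); this is true and used elsewhere in the appendix, but is worth stating explicitly.
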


\begin{proof}  We show that $D$ is invariant under every $\LL_v$-automorphism $\sigma\in \Aut(M_v)$.  So choose $\abar$ with $D(\abar)$ holding and let $\bbar=\sigma(\abar)$.
Since $D$ is $w$-irreflexive, we have that $\bigwedge_{i\neq j} \neg E_\ell(a_i,a_j)$, where $\ell=\max(w)$.  As $\sigma$ preserves $E_\ell$, we also have
$\bigwedge_{i\neq j} \neg E_\ell(b_i,b_j)$.  As $E_k$ refines $E_\ell$, we conclude that $\bigwedge_{i\neq j} \neg E_k(a_i,a_j)$ and $\bigwedge_{i\neq j} \neg E_k(b_i,b_j)$.
Thus, $\sigma$ also preserves $E_k$ on $\abar,\bbar$, so $\tp_{\LL_u}(\abar)=\tp_{\LL_u}(\bbar)$.
Since $D(\overline{x})$ is $\LL_u$-definable and $D(\overline{a})$ holds, we get $D(\overline{b})$ holds as well.
%YYYY Changed ending vs: So there is an $L_u$-automorphism $\psi\in \Aut(M_u)$ with $\psi(\abar)=\bbar$.  Since $D(\xbar)$ is $L_u$-definable, we conclude that $D(\bbar)$ holds.
\end{proof}

%To better see the flow of the proof of Theorem~\ref{apprevise}, we delay the proof of the following Proposition until later.
The following Proposition provides the crux of the induction used in proving Theorem~\ref{apprevise}.

\begin{Proposition} \label{MainLemma} Suppose  $u\subseteq \omega+1$ is finite with $0\in u$ and $\ell<k$ the two largest elements of $u$.  Suppose $D(\xbar)$ is $\LL_u$-definable and $u$-irreflexive.
If $E_\ell$ is not $\{D,E_k\}$-definable, then $D$ is $\LL_{u\setminus \{\ell\}}$-definable.  
\end{Proposition}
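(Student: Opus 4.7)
The plan is to prove the contrapositive: assuming $D$ is not $\LL_v$-definable, show that $E_\ell$ is $\{D, E_k\}$-definable. First I would pass to the quotient $M / E_k$, which is clean since $D$ is $u$-irreflexive and so its tuples have pairwise distinct $E_k$-classes; this reduces to the case $k = \omega$ (so $E_k$ becomes equality). In the reduced setting, letting $\Delta$ denote the set of $\LL_u$-types of $u$-irreflexive $n$-tuples in $D$, the hypothesis becomes: there exist $\alpha \in \Delta$ and $\alpha' \notin \Delta$ with the same $\LL_v$-projection $\alpha_v = \alpha'_v$.

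My candidate formula is
\[
\phi(x,y) \ := \ x = y \ \vee \ \forall \bar z\bigg[\bigwedge_{i=1}^{n-1}(z_i \neq x \wedge z_i \neq y) \to \big(D(x, \bar z) \leftrightarrow D(y, \bar z)\big)\bigg],
\]
which is expressible in $\{D, E_k\}$. For the easy direction $E_\ell(x, y) \Rightarrow \phi(x, y)$: if $E_\ell(x, y)$ and each $z_i \notin \{x, y\}$, then for every $r \in u$ with $r \leq \ell$ we have $E_r(x, z_i) \Leftrightarrow E_r(y, z_i)$ by transitivity of $E_r$ using $E_r(x, y)$, while for $r = k$ both are false since $z_i \neq x, y$; hence $\tp_{\LL_u}(x, \bar z) = \tp_{\LL_u}(y, \bar z)$, so $D(x, \bar z) \Leftrightarrow D(y, \bar z)$.

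For the hard direction $x \neq y \wedge \neg E_\ell(x, y) \Rightarrow \neg \phi(x, y)$, let $m \in u$ be the element immediately below $\ell$. I would first normalize the witness pair by a partition-lattice argument: taking meets of the $E_\ell$-partitions $\pi_\alpha, \pi_{\alpha'}$ within each $E_m$-cell and interpolating across cells, I reduce to $\alpha, \alpha'$ differing only inside a single $E_m$-cell, with $\alpha'$ obtained from $\alpha$ by a single-class split $B = B_1 \sqcup B_2$. After reindexing so that $0 \in B_1$, I case-split on the $\LL_v$-type of $(x, y)$. If $\neg E_m(x, y)$, then any $\bar z$ realizing $\alpha$ with $x$ at position $0$ makes $(y, \bar z)$ have a strictly different $\LL_v$-type than $\alpha_v$, giving a $D$-discrepancy by direct inspection. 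In the critical case $E_m(x, y) \wedge \neg E_\ell(x, y)$, I would choose $\bar z$ realizing $\alpha$ with $x$ at position $0$, with the coordinates indexed by $B \setminus \{0\}$ lying in $[x]_{E_\ell}$ and all remaining coordinates in $E_\ell$-classes disjoint from both $[x]_{E_\ell}$ and $[y]_{E_\ell}$; then substituting $y$ for $x$ flips exactly the $E_\ell$-links from position $0$ to $B \setminus \{0\}$, realizing the split $\{0\} \sqcup (B \setminus \{0\})$, which equals $\alpha'$ provided $B_1 = \{0\}$.

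The main obstacle lies in making the normalization and realization steps compatible. The partition-lattice argument produces a single-class split, but does not in general force $|B| = 2$ (equivalently $B_1 = \{0\}$): if $\Delta$ is ``narrow'' near its boundary, the smallest split available there may involve an entire multi-element class. Compounding this, when $\delta(\ell) = 2$ each $E_m$-class contains only two $E_\ell$-classes, removing the freedom to place coordinates in $E_\ell$-classes disjoint from both $[x]_{E_\ell}$ and $[y]_{E_\ell}$, so substituting $y$ for $x$ may trigger uncontrolled additional flips in the $E_\ell$-type of the tuple. Overcoming these joint obstacles will likely require either a stronger normalization lemma---producing witnesses $\alpha, \alpha'$ with extra compatible structure near position $0$, via a finer combinatorial analysis of the boundary of $\Delta$ in the partition lattice---or replacing $\phi$ by a richer $\{D, E_k\}$-formula that quantifies over larger auxiliary tuples and composes multiple substitutions, so that the required $\alpha'$-configuration can be assembled indirectly. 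I expect this is where the $u$-irreflexivity hypothesis and the fine combinatorial structure of $\Delta$ will be used most critically.
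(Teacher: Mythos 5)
There is a genuine gap, and it is essentially the one you diagnose yourself at the end. Your overall shape is right --- build a candidate $\{D,E_k\}$-formula $\phi(x,y)$ by asking whether substituting $y$ for $x$ preserves $D$, and prove the easy implication $E_\ell(x,y)\vdash\phi(x,y)$ exactly as you do --- but the hard direction as you set it up (for \emph{every} pair with $\neg E_\ell(x,y)$, directly exhibit a discriminating tuple $\bar z$) is not how the argument can be closed, and the obstacles you list (single-class splits with $|B|>2$, no room for fresh $E_\ell$-classes when $\delta(\ell)=2$) are real. The missing idea is to stop trying to show that $\phi$ defines $E_\ell$ outright and instead exploit $\omega$-categoricity of the reduct $M_u$: the $2$-types are isolated by the formulas $E_i(x,y)\wedge\neg E_{i^+}(x,y)$, so if $\phi$ (which is $\LL_u$-definable) holds of even one pair realizing $p_i$ with $i<\ell$, it holds of \emph{all} pairs realizing $p_i$; a short bootstrapping argument then upgrades this to $E_i(x,y)\vdash\phi(x,y)$ and hence $E_m(x,y)\vdash\phi(x,y)$, where $m$ is the next element of $u$ below $\ell$. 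The conclusion is then not that $\phi$ defines $E_\ell$, but that $D$ is $\LL_{u\setminus\{\ell\}}$-definable: one shows $D$ is invariant under every $\LL_{u\setminus\{\ell\}}$-automorphism by an $n$-step exchange argument that repeatedly uses $E_m(a,a^*)\vdash\phi(a,a^*)$ to replace coordinates one at a time. This completely sidesteps the combinatorial analysis of the ``boundary of $\Delta$'' that blocks your construction.

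Two further points. First, a single substitution formula does not suffice for the exchange argument: one also needs a \emph{transposition} formula $\theta(x,y)$ asserting that swapping $x$ and $y$ inside a tuple (both present) preserves $D$, because during the replacement step the fresh element $a^*$ may land in the $E_k$-class of another coordinate of the tuple, and then one swaps rather than substitutes. Second, your formula only tests substitution into the first coordinate; you need the conjunction over all permutations of the variables (the $D^\rho$ device) so that every coordinate position is covered. With $\phi:=\theta\wedge\psi$ and the three claims ($E_\ell\vdash\phi$; if $\phi$ does not define $E_\ell$ then $E_m\vdash\phi$; if $E_m\vdash\phi$ then $D$ is $\LL_{u\setminus\{\ell\}}$-definable), the proposition follows, whereas your current plan would still need the ``stronger normalization lemma'' you ask for, which I do not believe exists in the generality required.
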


\begin{proof}   Because of our trivial interpretation of $E_0$, the Proposition is easy if $|u|=2$,  so assume $|u|\ge 3$ and let $m<\ell<k$ denote the top three elements in $u$.
Let $v=u\setminus\{\ell\}$.
We begin by defining a single $\{D,E_k\}$-definable formula $\phi(x,y)$ and we actually prove the stronger statement that if $\phi$ is not equivalent to $E_\ell$,
then $D$ is $\LL_v$-definable.  

%Let $m<\ell<k$ denote the top three elements of $u$.  We introduce a family of formulas.

For $\rho\in \Sym(n)$, let $D^\rho$ be the same formula $D(x_{\rho(0)},\dots,x_{\rho(n-1)})$, whose free variables are permuted by $\rho$.  
Let $$\theta(x,y):=\bigwedge_{\rho\in \Sym(n)} \forall z_2\dots\forall z_{n-1}[ D^\rho(x,y,z_2,\dots,z_{n-1})\leftrightarrow D^\rho(y,x,z_2,\dots,z_{n-1})]$$

Let $\Edist_k(z_0,\dots,z_{n-1}):=\bigwedge_{i\neq j}  \neg E_k(x_i,x_j)$ and let

$$\psi(x,y):=\bigwedge_{\rho\in \Sym(n)}\forall z_1\dots\forall z_{n-1} \left(\Edist_k(x,\zbar)\wedge\Edist_k(y,\zbar)\rightarrow [D^\rho(x,\zbar)\leftrightarrow D^\rho(y,\zbar)]\right)$$

Finally, put $\phi(x,y):=\theta(x,y)\wedge \psi(x,y)$.

The proof  of Proposition~\ref{MainLemma} follows immediately from the following three claims.

\medskip
\noindent
{\bf Claim 1.}  $E_\ell(x,y)\vdash \phi(x,y)$.

\begin{proof}  Choose any $a,b\in M$ with $E_\ell(a,b)$.  We first show $\theta(a,b)$, with the verification of $\psi(a,b)$ similar and left to the reader.
Choose any $c_2,\dots,c_{n-1}$ from $M$.  By symmetry, it suffices to show that $D^\rho$ is preserved when $\rho=id$, i.e., when $D^\rho=D$.
If both $D(a,b,\cbar)$ and $D(b,a,\cbar)$ fail we are done, so suppose $D(a,b,c_2,\dots,c_{n-1})$ holds.  Since $D$ is $u$-irreflexive, it follows that $\Edist_k(a,b,c_2,\dots,c_n)$.
In particular,  $\neg E_k(a,c_j)$ and $\neg E_k(b,c_j)$ hold for 
all $2\le j\le n-1$.  
 Since $E_\ell(a,b)$, we also  have $E_\ell(a,c_j)\leftrightarrow E_\ell(b,c_j)$  for each $2\le j\le n-1$.  It follows that $\tp_{\LL_u}(a,b,c_2,\dots,c_{n-1})=\tp_{\LL_u}(b,a,c_2,\dots,c_{n-1})$.  As $D$ is $\LL_u$-definable and $D(a, b, c_2, \dots, c_{n-1})$ holds, we conclude that $D(b,a,c_2,\dots,c_n)$.  Thus, $\theta(a,b)$ holds.
 That $\psi(a,b)$ holds is similar.
 \end{proof}

 \medskip
\noindent
{\bf Claim 2.}  If the $\{D,E_k\}$-formula $\phi(x,y)$ does not define $E_\ell$, then $E_m(x,y)\vdash \phi(x,y)$.

\begin{proof}  By Claim 1, we know that $E_\ell(x,y)\vdash \phi(x,y)$, so if $\phi$ does not define $E_{\ell}$, then there are $a,b\in M_u$
such that $\phi(a,b)\wedge\neg E_\ell(a,b)$ holds.  Let $p_i(x,y)=\tp_{\LL_u}(a,b)$.  As $p_i$ is a complete type isolated
by $E_i(x,y)\neg E_{i^+}(x,y)$, we conclude that
$$E_i(x,y)\wedge \neg E_{i^+}(x,y)\vdash \phi(x,y)$$
[Here, $i^+$ is the least element of $u>i$.]  Since $\neg E_\ell(a,b)$ holds, $i<\ell$, hence $i\le m$.  Since $E_m$ refines $E_i$, Claim~3 will be established once we prove
the following Subclaim.   

\medskip
\noindent
{\bf  Subclaim.}  $E_i(x,y)\vdash \phi(x,y)$.

\begin{proof}  We first show $E_i(x,y)\vdash \theta(x,y)$.  By symmetry, it suffices to show this for $\rho=id$, i.e., when $D^\rho=D$.
So choose $a,b,\cbar$ from $M$ with $E_i(a,b)$.  Since we already know that $p_i(x,y)\vdash\phi(x,y)$, we may additionally assume that $E_{i^+}(a,b)$ holds as well.
Clearly, if $\Edist_k(a,b,\cbar)$ fails, then since $D$ is $u$-irreflexive, $\neg D(a,b,\cbar)\wedge \neg D(b,a,\cbar)$ hold and we are done, so
assume $\{a,b,\cbar\}$ are pairwise $E_k$-inequivalent.
There are now two subcases.  First, assume there is some $c_j\in\cbar$ such that $p_i(a,c^*)$ holds.  Then also $p_i(b,c_j)$ holds.  To ease notation, suppose $j=2$ and 
write $\dbar$ for $(c_3,\dots,c_{n-1})$.   Then, as $p_i(x,y)\vdash\theta(x,y)$, we have
$$D(a,b,c_2,\dbar)\leftrightarrow D(c_2,b,a,\dbar)\leftrightarrow D(b,c_2,a,\dbar)\leftrightarrow D(b,a,c_2,\dbar)$$
as needed. [The three equivalences follow from $\theta(a,c_2)$, $\theta(b,c_2)$, and $\theta(c_2,a)$, respectively.]  

So, assume there no such $c_j\in\cbar$.  By the homogeneity of $M_u$, choose some $a^*\in M$ such that $p_i(a,a^*)$ holds.  As $E_{i^+}(a,b)$ hold, $p_i(b,a^*)$ holds as well.
Since $\{a,b,\cbar\}$ are $E_k$-inequivalent and since we are in this subcase, we also have $\neg E_k(c_j,a^*)$ for all $2\le j\le n-1$.  Thus, $\{a,b,a^*,\cbar\}$ are all $E_k$-inequivalent.
Since $p_i\vdash\psi(x,y)$, we can exchange $a^*$ with either of $a$ or $b$ in any coordinate.  
Thus, $$D(a,b,\cbar)\leftrightarrow D(a^*,b,\cbar)\leftrightarrow D(b,a^*,\cbar)\leftrightarrow D(b,a,\cbar)$$
using $\psi(a,a^*)$, $\theta(a^*,b)$, and $\psi(a,a^*)$, respectively.  So we have established that $E_i(x,y)\vdash \theta(x,y)$.

Finally, we show $E_i(x,y)\vdash \psi(x,y)$.  For this, choose $a,b\in M$ such that $E_i(a,b)$.  As we know $p_i\vdash\psi(x,y)$, again we may assume  $E_{i^+}(a,b)$ as well.
We split into the same two subcases as before.  First, assume there is some $c_j\in\cbar$ such that $p_i(a,c_j)$.  For notational simplicity, assume $j=1$ and write
$\cbar=c_1\dbar$.  Then also, $p_i(b,c_1)$, so we have $\psi(a,c_1)$ and $\psi(b,c_1)$.
Thus, 
$$D(a,c_1,\dbar)\leftrightarrow D(a,b,\dbar)\leftrightarrow D(b,a,\dbar)\leftrightarrow D(b,c_1,\dbar)$$
with the second equivalence using the implication $E_i(x,y)\vdash\theta(x,y)$ established above. 
Finally, assume that there is no $c_j\in\cbar$ with $p_i(a,c_j)$.  Similarly to the $\theta$ case, choose $a^*\in M$ such that $p_i(a,a^*)$, hence also $p_i(b,a^*)$.
By our case assumption, $\{a,b,a^*,\cbar\}$ are pairwise $E_k$-inequivalent, so as $p_i\vdash\psi$, we can swap $a^*$ for either $a$ or $b$.
Thus, $$D(a,\cbar)\leftrightarrow D(a^*,\cbar)\leftrightarrow D(b,\cbar)$$
completing the proof of the Subclaim and hence of Claim 2.
\end{proof}
\end{proof}

\medskip
\noindent
{\bf Claim 3.}  If $E_m(x,y)\vdash \phi(x,y)$ then $D$ is $\LL_v$-definable, where $v=u\setminus\{\ell\}$.

\begin{proof}  We argue that $D$ is preserved under $\LL_v$-automorphisms $\sigma\in\Aut(M)$.  So choose $\abar\in M^n$ such that $D(\abar)$ holds and let $\bbar=\sigma(\abar)$.
We will conclude that $D(\bbar)$ holds by constructing, in $n$ steps, some $\cbar\in M^n$ such that $D(\cbar)$ holds and $\tp_{\LL_u}(\cbar)=\tp_{\LL_u}(\bbar)$.
We will inductively construct $n$-tuples $\abar^i=(a^i_j:j<n)$ for $i<n$ from $M$ satisfying
\begin{enumerate}
\item  $\abar^i\equiv_{\LL_v} \bbar$;
\item  $D(\abar^i)$ holds; and
\item  For all $j,j'<i$, $E_\ell(a^i_j,a^i_{j'})\leftrightarrow E_\ell(b_j,b_{j'})$.
\end{enumerate}
If we can succeed, we put $\cbar:=\abar^n$ and we finish.  Note that (1) and (3) combine to say that the $i$-element subtuples
$(a^i_j:j<i)$ and $(b_j:j<i)$ have the same $\LL_u$-type.
So, put $\abar^0:=\abar$ and assume $i<n-1$ and $\abar^i$ has been defined and satisfies the three requirements.
There are now three cases about how we choose $\abar^{i+1}$.

First, if $\neg E_m(a^i_j,a^i_i)$ for all $j<i$, then simply put $\abar^{i+1}:=\abar^i$ and all three conditions are met, using that  $E_\ell$ refines $E_m$.
So, assume this is not the case.  Fix $j^*<i$ such that $E_m(a^i_{j^*},a^i_i)$ holds.  Since $\abar^i\equiv_{\LL_v} \bbar$ holds, it follows that $E_m(b_i,b_{j^*})$ holds as well.
%Note that the subtuples $(\abar_i)\mr{i}$ and $\bbar\mr{i}$ have the same $L_u$-type [they have the same $L_v$-type and agree
As $M_u$ is homogeneous, choose $a^*\in M$ so that the $(i+1)$-tuples $(a^i_j:j<i,a^*)\equiv_{\LL_u}(b_j:j\le i)$.   Note that $E_m(a^*,a_{j^*})$ iff $E_m(b_i,b_j^*)$, so from our
choice of $j^*$, we also have $E_m(a^i_i,a^i_{j^*})$, hence $E_m(a^*,a^i_i)$ by transitivity.   By our assumption, $\phi(a^*,a^i_i)$ holds.  
Our second case is to assume that  $\Edist_k(a^*,a_j:j\neq i)$ holds.  Here, let $\abar^{i+1}$ be the sequence formed
 from $\abar^i$ by exchanging $a^i_i$ by $a^*$.  
 Since $\psi(a^*,a^i_i)$ and $D(\abar^i)$ both hold, we have $D(\abar^{i+1})$ holding as needed.
 
The remaining case is where $\Edist_k(a^*,a_j:j\neq i)$ fails.  In this case, choose $j^{**}$ such that $E_k(a^*,a^i_{j^{**}})$ holds. The choice of $a_*$ implies that $j^{**} \geq i$. Then the $(i+1)$-tuples
$(a^i_j:j<i,a^i_{j^{**}})\equiv_{\LL_u} (b_j:j\le i)$, so take $\abar^{i+1}$ to be $\abar^i$, with the elements $a^i_i$ and $a^i_{j^{**}}$ interchanged.  
Since $E_m(a^*, a^i_i)$ holds and $E_k(a^*,a^i_{j^{**}})$ holds, we have $E_m(a^i_i,a^i_{j^{**}})$ holds, hence $\theta(a^i_i,a^i_{j^{**}})$.
Thus, $D(\abar^{i+1})$ as required.
\end{proof}
These three Claims finish the proof of Proposition~\ref{MainLemma}.

\end{proof}

%YYYY The bracket notation [n] is wrong---everywhere I've seen, [n] = \{1, \ldots, n\}. \{0, \ldots, n-1\} is just n. Also changed notation for image.
\begin{Definition}  {\em  For $n=\{0,\dots,n-1\}$, a {\em projection} $\pi:n\rightarrow R$ is any function satisfying  $\pi(r)=r$ for all $r\in R$ (so $R\subseteq n$).
As notation, $\pi[n]$ denotes the image of $\pi$.  For any $k\in\omega+1$ and for any projection $\pi:n\rightarrow R$, let
$$\Delta^\pi_k(x_0,\dots,x_{n-1}):=\bigwedge_{\pi(i)=\pi(j)} E_k(x_i,x_j)\wedge\bigwedge_{\pi(i)\neq \pi(j)} \neg E_k(x_i,x_j)$$
and for a given formula $D(x_0,\dots,x_{n-1})$, 
$$D^\pi_k(x_i:i\in \pi[n]):=\exists y_0\dots\exists y_{n-1} [\bigwedge_{i\in \pi[n]} y_i=x_i \wedge \Delta^\pi_k(y_0,\dots,y_{n-1}) \wedge  D(y_0,\dots,y_{n-1})]$$
}
\end{Definition}

\medskip
\noindent
{\bf Proof of Theorem~\ref{apprevise}.}

\medskip

  Given an $\LL$-definable $D(x_0,\dots,x_{n-1})$, choose a finite $u\subseteq \omega+1$ with
$\{0,\omega\}\subseteq u$ such that $D$ is $\LL_u$-definable.  Note that
$D(\xbar)\leftrightarrow \bigvee_\pi D^{\pi}_\omega$, where the disjunction ranges over all projections of $n$.   As each $D^\pi_\omega$ is $u$-irreflexive,
we may assume that our original $D(\xbar)$ is $u$-irreflexive.  Thus, the Theorem is proved, once we establish the following Claim, which is proved by induction on $|u|$. 

\medskip
\noindent{\bf Claim.}  Suppose $u\subseteq \omega+1$ is finite with $0\in u$, and let $k=\max(u)$.  If $D(\xbar)$ is $\LL_u$-definable and $u$-irreflexive, then
there is some $F\subseteq u$ such that the sets $\{D,E_k\}$ and $\{E_j:j\in F\}$ are $\emptyset$-definably equivalent.

\begin{proof}  As noted above, we argue by induction on $|u|$.  Say $|u|=m$ and the result holds for all proper subsets of $u$.  Choose $D(\xbar)$ to be $u$-definable and $u$-irreflexive.  There are now two cases:

\medskip
\noindent
{\bf Case 1.}  $E_\ell$ is not $\{D,E_k\}$-definable. 

\medskip

In this case, by Proposition~\ref{MainLemma}, taking $v=u\setminus\{\ell\}$,  $D$ is $\LL_v$-definable and $v$-irreflexive.
Thus, we finish by the inductive hypothesis.

\medskip
\noindent
{\bf Case 2.}  $E_\ell$ is $\{D,E_k\}$-definable.  

\medskip

Here, note that

$$D(\xbar)\leftrightarrow \bigwedge_{i\neq j} \neg E_k(x_i,x_j)\wedge\bigvee_\pi D^\pi_\ell(x_i:i\in \pi[n])$$
so the sets $\{D,E_k\}$ and  $\{D^\pi_\ell:\pi$ a projection$\}\cup\{E_\ell,E_k\}$
are definably equivalent.
It follows that each $D^\pi_\ell$ is $\LL_u$-definable (since $D$ was) and is visibly $w$-irreflexive, where $w=u\setminus\{k\}$.  
By Lemma~\ref{technical}, we conclude that each $D^\pi_\ell$ is $\LL_w$-definable, so we can apply the inductive hypothesis on each $D^\pi_\ell$.
For each $\pi$, choose a finite set $F^\pi\subseteq w$ so that $\{D^\pi_\ell,E_\ell\}$ and $\{E_j:j\in F_\phi\}$ are $\emptyset$-definably equivalent.
Put $F^*=\bigcup_\pi F_\pi\cup \{E_\ell,E_k\}$.  It follows that $\{D,E_k\}$ and $\{E_j:j\in F^*\}$ are $\emptyset$-definably equivalent.
\end{proof}

\end{document}